\theoremstyle{plain}
\newtheorem{theorem}{Theorem}[section]
\newtheorem{proposition}[theorem]{Proposition}
\theoremstyle{definition}
\newtheorem{remark}[theorem]{Remark}
\numberwithin{equation}{section}
\def\vr{\varrho}
\def\s{\sigma}
\def\M{\mathcal M}
\def\Mpl{\mathcal M_+}
\def\N{\mathbb N}
\def\R{\mathbb R}
\def\rn{\mathbb R^n}
\def\esssup{\operatornamewithlimits{ess\,\sup}}
\def\A{{\mathbb A}}
\def\B{{\mathbb B}}
\def\C{{\mathbb C}}
\def\d{{\fam0 d}}
\def\L1loc{L^1_{\text{loc}}}
\begin{document}

\title{Embeddings of homogeneous Sobolev spaces on the entire space}
\author{Zden\v ek Mihula}

\address{Zden\v ek Mihula, Charles University, Faculty of Mathematics and
Physics, Department of Mathematical Analysis, Sokolovsk\'a~83, 186~75 Praha~8, Czech Republic}
\email{mihulaz@karlin.mff.cuni.cz}
\urladdr{0000-0001-6962-7635}

\subjclass[2000]{26D10, 46E30, 46E35, 47B38}
\keywords{optimal function spaces, rearrangement-invariant spaces, reduction principle, Sobolev spaces}

\thanks{This research was supported by the grant P201-18-00580S of the Grant Agency of the Czech Republic, by the grant SVV-2017-260455, and by Charles University Research program No.~UNCE/SCI/023.}

\begin{abstract}
We completely characterize the validity of the inequality $\|u\|_{Y(\rn)}\leq C \|\nabla^m u\|_{X(\rn)}$, where $X$ and $Y$ are rearrangement-invariant spaces, by reducing it to a~considerably simpler one-dimensional inequality. Furthermore, we fully describe the optimal rearrangement-invariant space on either side of the inequality when the space on the other side is fixed. We also solve the same problem within the environment in which the competing spaces are Orlicz spaces. A~variety of examples involving customary function spaces suitable for applications is also provided.
\end{abstract}

\date{\today}

\maketitle

\setcitestyle{numbers}
\bibliographystyle{plainnat}


\section{Introduction}
The celebrated Gagliardo--Nirenberg--Sobolev inequality, which was proved for $p\in(1,n)$ by Sobolev and for $p=1$ by Gagliardo and Nirenberg independently, tells us that there exists a positive constant $C$ such that
\begin{equation}\label{intr:gnsineq}
\|u\|_{L^{p^*}(\rn)}\leq C\|\nabla u\|_{L^p(\rn)}\quad\text{for each $u\in W^{1,p}(\rn)$},
\end{equation}
where $n\in\N,\ n\geq2$, $p\in[1,n)$ and $p^*=\frac{np}{n-p}$. Here $W^{1,p}(\rn)$ stands for the Sobolev space of all weakly differentiable functions $u$ on $\rn$ that together with their gradients belong to $L^p(\rn)$. This result and its various modifications is classical and can be found in a wide variety of literature (e.g.~\citep{MR2424078, MR0102740, Mabook, MR0109940, Sob38, MR1125990, ziemerweakly}).
The Gagliardo--Nirenberg--Sobolev inequality and its consequences proved undoubtedly to be indispensable tools for analysis of partial differential equations, harmonic analysis and other fields of mathematics. The inequality \eqref{intr:gnsineq} was refined by Peetre (\citep{MR0221282}), utilizing the convolution inequality of O'Neil's (\citep{MR0146673}), to
\begin{equation}\label{intr:peetreoneilineq}
\|u\|_{L^{p^*, p}(\rn)}\leq C\|\nabla u\|_{L^p(\rn)}\quad\text{for each $u\in W^{1,p}(\rn)$},
\end{equation}
where $L^{p^*, p}(\rn)$ is a Lorentz space (for the definition of Lorentz spaces, see Section~\ref{sec:prel}). The inequality \eqref{intr:peetreoneilineq} is a substantial improvement of \eqref{intr:gnsineq} because the Lorentz space $L^{p^*, p}(\rn)$ is strictly smaller than the Lebesgue space $L^{p^*}(\rn)$. By iteration arguments one can also derive inequalities similar to the inequalities above where the first order gradient on the right-hand side is replaced by $m-$th order gradient, where $m>1$.

Theory as well as applications shows that finer scales of function spaces are indeed needed and so subtler forms of the Gagliardo--Nirenberg--Sobolev inequality involving more general function spaces are of great interest in mathematical analysis and its applications (e.g.~\citep{MR1814973, MR1670307, BW, MR0216286, Zhikov-1987}).

In this paper we focus on inequalities in which norms of scalar functions of several variables are compared to norms of their gradients from a broader perspective. It is known that Lebesgue spaces as well as more general Lorentz spaces are special instances of the so-called rearrangement-invariant spaces, which are, loosely speaking, Banach spaces of functions whose norms depend merely on the size of functions. We will consider inequalities taking the form
\begin{equation}\label{intr:homoemb}
\|u\|_Y\leq C \|\nabla^m u\|_X\quad\text{for each $u\in V_0^m X(\rn)$},
\end{equation}
where $C$ si a~positive constant independent of $u$, $m\in\N$, $1\leq m < n$, $X$ and $Y$ are rearrangement-invariant spaces over $\rn$ and $V_0^m X(\rn)$ is a vector space of all $m$-times weakly differentiable functions on $\rn$ whose $m$-th order gradients belong to $X$ and whose derivatives up to order $m-1$ have ``some decay at infinity''. In some sense the most general condition that ensures such decay is to assume that $|\{x\in\rn\colon|\nabla^k u(x)|>\lambda\}|<\infty$ for each $\lambda>0$ and $k=0,1,\dots,m-1$. This means that any integrability assumptions on $u$ itself and its lower-order derivatives are not needed and it is enough to assume that they ``decay at infinity'', albeit arbitrarily slowly. Precise definitions as well as other theoretical background needed in this paper are provided in Section~\ref{sec:prel}. We note that embeddings of Sobolev spaces on $\rn$ in the class of rearrangement-invariant spaces were studied in \citep{ACPS, MR2307145} but with the right-hand side involving the full gradient (that is, derivatives of all orders). Therefore, the problem studied there is essentially different from the one investigated in this paper as our right-hand side involves only the $m$-th order gradient, see \eqref{intr:homoemb}.

The main results regarding the inequality \eqref{intr:homoemb} are contained in Section~\ref{sec:mainresults}. We prove, among other things, so-called \emph{reduction principle} for the inequality \eqref{intr:homoemb}. This reduction principle (see Theorem~\ref{thm:reductionprinciple}) reveals that the inequality \eqref{intr:homoemb} is, in fact, equivalent to a one-dimensional inequality involving a~weighted Hardy-type operator. Moreover, for a fixed rearrangement-invariant space $X$ over $\rn$, we fully characterize the best possible (i.e.~the smallest possible) rearrangement-invariant space $Y$ over $\rn$ that renders \eqref{intr:homoemb} true (see Theorem~\ref{thm:optimalrange}). Complementing this result, we also answer the opposite question what the best possible (i.e.~the largest possible) rearrangement-invariant space $X$ over $\rn$ that renders \eqref{intr:homoemb} true for a fixed rearrangement-invariant space $Y$ over $\rn$ is (see Theorem~\ref{thm:optimaldomain}). The results presented in Section~\ref{sec:mainresults} are then proved in Section~\ref{sec:proofs}. We note that reduction principles have been successfully applied before, see e.g.~\cite{CPS, EKP, T2}.

The general results presented in Section~\ref{sec:mainresults} may be considered somewhat complicated from the point of view of applications in partial differential equations or harmonic analysis. For this reason, we provide a variety of concrete examples of optimal spaces in \eqref{intr:homoemb} for customary function spaces of particular interest in applications in Section~\ref{sec:riexamples}. These examples include, in particular, Lebesgue spaces, Lorentz spaces, Orlicz spaces, or Zygmund classes. For instance, these examples reveal that not only is the result of Peetre's (i.e.~\eqref{intr:peetreoneilineq}) better than \eqref{intr:gnsineq}, but it cannot, in fact, be improved. More precisely, the Lorentz space $L^{p^*,p}$ is the smallest possible rearrangement-invariant space on the left-hand side of \eqref{intr:peetreoneilineq} that renders the inequality true. Similar results are provided for other situations too.

Although the class of rearrangement-invariant spaces is very rich and contains many customary function spaces, it is sometimes useful in applications to work within a narrower class of function spaces. A typical example of such a class is that of Orlicz spaces, which is an irreplaceable tool for analysing partial differentiable equations having a non-polynomial growth (e.g.~\citep{MR3328350, DONALDSON1971507, VUILLERMOT1982327}). This motivates Section~\ref{sec:orliczexamples}. We investigate the inequality
\begin{equation}\label{intr:homoembOrlicz}
\|u\|_{L^B}\leq C\|\nabla^m u\|_{L^A}\quad\text{for each $u\in V_0^m L^A(\rn)$},
\end{equation}
where $L^A$ and $L^B$ are Orlicz spaces over $\rn$. We characterize optimal Orlicz spaces on either side of the inequality above while the Orlicz space on the opposite side is fixed (see Theorem~\ref{thm:optimalorliczrange} and Theorem~\ref{thm:optimalorliczdomain}) and we also provide a reduction principle for the inequality \eqref{intr:homoembOrlicz} (see Theorem~\ref{thm:reductionprincipleOrlicz}). To illustrate the general situation some concrete examples of optimal Orlicz spaces in \eqref{intr:homoembOrlicz} are also provided in Section~\ref{sec:orliczexamples}. In particular, these examples show that the Lebesgue space $L^{p^*}$ is the smallest possible Orlicz space on the left-hand side of the inequality \eqref{intr:gnsineq} that renders the inequality true. We stress that the crucial difference between Section~\ref{sec:orliczexamples} and Section~\ref{sec:mainresults} is that, in Section~\ref{sec:orliczexamples}, we look for optimal spaces that stay in the narrower class of Orlicz spaces. Although Orlicz spaces are particular instances of rearrangement-invariant spaces and so one is entitled to use the results from Section~\ref{sec:mainresults}, there is no guarantee that resulting optimal rearrangement-invariant spaces are Orlicz spaces themselves. Finally, we note that the inequality \eqref{intr:homoembOrlicz} was partially studied in \citep{MR2073127}. However, only the first order version (i.e.~$m=1$) of the inequality was studied there and optimality of Orlicz spaces only on the left-hand side of the inequality was considered.

\section{Preliminaries}\label{sec:prel}
In this section we collect all the background material that will be used in the
paper. We start with the operation of the nonincreasing rearrangement of a
measurable function.

Throughout this section, let $(R,\mu)$ be a $\sigma$-finite nonatomic measure space.
We set
\[
	\M(R,\mu)= \{f\colon \text{$f$ is $\mu$-measurable function on $R$ with values in $[-\infty,\infty]$}\},
\]
\[
	\M_0(R,\mu)= \{f \in \M(R,\mu)\colon f \ \text{is finite}\ \mu\text{-a.e.\ on}\ R\}
\]
and
\[
\M_+(R,\mu)= \{f \in \M(R,\mu)\colon f \geq 0\}.
\]
The \textit{nonincreasing rearrangement} $f^* \colon  (0,\infty) \to [0, \infty ]$ of a function $f\in \M(R,\mu)$  is
defined as
\[
f^*(t)=\inf\{\lambda\in(0,\infty)\colon|\{s\in R\colon|f(s)|>\lambda\}|\leq t\},\ t\in(0,\infty).
\]
The \textit{maximal nonincreasing rearrangement} $f^{**} \colon (0,\infty) \to [0, \infty ]$ of a function $f\in \M(R,\mu)$  is
defined as
\[
f^{**}(t)=\frac1t\int_0^ t f^{*}(s)\,\d s,\ t\in(0,\infty).
\]
If $|f|\leq |g|$ $\mu$-a.e.\ in $R$, then $f^*\leq g^*$.
The operation $f\mapsto f^*$ does not preserve sums or products of functions,
and is known not to be subadditive. The lack of subadditivity of the operation
of taking the nonincreasing rearrangement is, up to some extent, compensated
by the following fact (\cite[Chapter~2,~(3.10)]{BS}): for every
$t\in(0,\infty)$ and every $f,g\in\mathcal M(R,\mu)$, we have
\begin{equation*}
\int_0^ t(f +g)^{*}(s)\,\d s
\leq
\int_0^ tf^{*}(s)\,\d s + \int_0^ tg
^{*}(s)\,\d s.
\end{equation*}
This inequality can be also written in the form
\begin{equation}\label{E:subadditivity-of-doublestar-a}
(f+g)^{**}\leq f^{**}+g^{**}.
\end{equation}
Another important property of rearrangements is the \textit{Hardy-Littlewood
inequality} (\citep[Chapter~2, Theorem~2.2]{BS}), which asserts that if $f, g \in\M(R,\mu)$,
then
\begin{equation}\label{E:HL}
\int _R |fg| \,\d\mu \leq \int _0^{\infty} f^*(t) g^*(t)\,\d t.
\end{equation}

If $(R,\mu)$ and $(S,\nu)$ are two (possibly different) $\sigma$-finite measure spaces, we say that functions $f\in \M(R,\mu)$ and $g\in\M(S,\nu)$ are \textit{equimeasurable}, and write $f\sim g$, if $f^*=g^*$ on $(0,\infty)$.

A functional $\vr\colon  \M _+ (R,\mu) \to [0,\infty]$ is called a \textit{Banach function norm} if, for all $f$, $g$ and
$\{f_j\}_{j\in\N}$ in $\M_+(R,\mu)$, and every $\lambda \geq0$, the
following properties hold:
\begin{enumerate}[(P1)]
\item $\vr(f)=0$ if and only if $f=0$;
$\vr(\lambda f)= \lambda\vr(f)$; $\vr(f+g)\leq \vr(f)+ \vr(g)$ (the \textit{norm axiom});
\item $  f \le g$ a.e.\  implies $\vr(f)\le\vr(g)$ (the \textit{lattice axiom});
\item $  f_j \nearrow f$ a.e.\ implies
$\vr(f_j) \nearrow \vr(f)$ (the \textit{Fatou axiom});
\item $\vr(\chi_E)<\infty$ \ for every $E\subseteq R$ of finite measure (the \textit{nontriviality axiom});
\item  if $E$ is a subset of $R$ of finite measure, then $\int_{E} f\,\d\mu \le C_E
\vr(f)$ for a positive constant $C_E$, depending possibly on $E$ and $\vr$ but independent of $f$ (the \textit{local embedding in $L^1$}).
\end{enumerate}
If, in addition, $\vr$ satisfies
\begin{itemize}
\item[(P6)] $\vr(f) = \vr(g)$ whenever
$f^* = g^ *$(the \textit{rearrangement-invariance axiom}),
\end{itemize}
then we say that $\vr$ is a
\textit{rearrangement-invariant norm}.

If $\vr$ is a rearrangement-invariant norm, then the collection
\[
X=X({\vr})=\{f\in\M(R,\mu)\colon \vr(|f|)<\infty\}
\]
is called a~\textit{rearrangement-invariant space}, sometimes we shortly write just an~\textit{r.i.~space}, corresponding to the norm $\vr$. We shall write $\|f\|_{X}$ instead of $\vr(|f|)$. Note that the quantity $\|f\|_{X}$ is defined for every $f\in\M(R,\mu)$, and
\[
f\in X\quad\Leftrightarrow\quad\|f\|_X<\infty.
\]

With any rearrangement-invariant function norm $\vr$, there is associated another functional, $\vr'$, defined for $g \in  \M_+(R,\mu)$ as
\[
\vr'(g)=\sup\left\{\int_{R} fg\,\d\mu\colon f\in\M_+(R,\mu),\ \vr(f)\leq 1\right\}.
\]
It turns out that  $\vr'$ is also a
rearrangement-invariant norm, which is called
the~\textit{associate norm} of $\vr$. Moreover, for every rearrangement-invariant norm $\vr$ and every $f\in\Mpl(R,\mu)$, we have (see~\citep[Chapter~1, Theorem~2.9]{BS})
\[
\vr(f)=\sup\left\{\int_{R}fg\,\d\mu\colon g\in\M_+(R,\mu),\ \vr'(f)\leq 1\right\}.
\]
By~\citep[Chapter~2, Proposition~4.2]{BS} we, in fact, have
\[
\vr'(g)=\sup\left\{\int_0^{\mu(R)} f^*(t)g^*(t)\,\d t\colon f\in\M(R,\mu),\ \vr(f)\leq 1\right\}
\]
and
\[
\vr(f)=\sup\left\{\int_0^{\mu(R)}f^*(t)g^*(t)\,\d t\colon g\in\M(R,\mu),\ \vr'(f)\leq 1\right\}.
\]

If $\vr$ is a~rearrangement-invariant norm, $X=X({\vr})$ is the rearrangement-invariant space determined by $\vr$, and $\vr'$ is the associate norm of $\vr$, then the function space $X({\vr'})$ determined by $\vr'$ is called the \textit{associate space} of $X$ and is denoted by $X'$. We always have $(X')'=X$ (see~\citep[Chapter~1, Theorem~2.7]{BS}), and we shall write $X''$ instead of $(X')'$. Furthermore,
the \textit{H\"older inequality}
\[
\int_{R}|fg|\,\d \mu\leq\|f\|_{X}\|g\|_{X'}
\]
holds for every $f,g\in \M(R,\mu)$.

We say that a~rearrangement-invariant space $X$ is \emph{embedded into} a~rearrangement-invariant space $Y$, and we write
\begin{equation}\label{pre:embedding}
X\hookrightarrow Y,
\end{equation}
if $X\subseteq Y$ and the inclusion is continuous, that is, there exists a positive constant $C$ such that
\begin{equation*}
\|f\|_Y\leq C\|f\|_X\quad\text{for each }f\in X.
\end{equation*}
However, it turns out that \eqref{pre:embedding} holds if and only if $X\subseteq Y$ (\citep[Chapter~1, Theorem~1.8]{BS}).

Another important property (see \citep[Chapter~1, Proposition~2.10]{BS}), which we shall exploit several times, is that \eqref{pre:embedding} holds if and only if
\begin{equation}\label{pre:embeddingdual}
Y'\hookrightarrow X'.
\end{equation}
Moreover, if \eqref{pre:embedding} holds, then \eqref{pre:embeddingdual} holds in fact with the same embedding constant, and vice versa.

For every rearrangement-invariant space $X$ over the measure space $(R,\mu)$, there exists a~unique rearran\-gement-invariant space $X(0,\mu(R))$ over the interval $(0,\mu(R))$ endowed with the one-dimensional Lebesgue measure such that $\|f\|_X=\|f^*\|_{X(0,\mu(R))}$. This space is called the~\textit{representation space} of $X$. This follows from the Luxemburg representation theorem (see \citep[Chapter~2, Theorem~4.10]{BS}). Throughout this paper, the representation space of a rearrangement-invariant space $X$ will be denoted by $X(0,\mu(R))$. It will be useful to notice that when $R=(0,\infty)$  and $\mu$ is the Lebesgue measure, then every $X$ over $(R,\mu)$ coincides with its representation space.

If $\vr$ is a~rearrangement-invariant norm and $X=X({\vr})$ is the
rearrangement-invariant space determined by $\vr$, we define its
\textit{fundamental function}, $\varphi_X$, by
\begin{equation*}
\varphi_X(t)=\varrho(\chi_E),\ t\in[0,\mu(R)),
\end{equation*}
where $E\subseteq R$ is such that $\mu(E)=t$. The property (P6) of rearrangement-invariant norms and the fact that $\chi_E^* = \chi_{[0, \mu(E))}$ guarantee that
the fundamental function is well defined. Moreover, one has
\begin{equation*}
	\varphi_X(t)\varphi_{X'}(t)=t
	\qquad \textup{for every}\ t\in[0,\mu(R)).
\end{equation*}
For each $a\in(0,\infty)$, let $D_a$ denote the \textit{dilation operator}
defined on every nonnegative measurable function $f$ on $(0,\infty)$ by
\[
(D_af)(t)=f(at), \ t\in(0,\infty).
\]
The dilation operator $D_a$ is bounded on every rearrangement-invariant space over
$(0,\infty)$; hence, in  particular, on the representation space of any
rearrangement-invariant space over an~arbitrary $\sigma$-finite nonatomic measure space. More
precisely, if $X$ is any given rearrangement-invariant space over $(0,\infty)$
with respect to the one-dimensional Lebesgue measure, then we have
\[
\|D_af\|_{X}\leq C\|f\|_X\quad \textup{for every}\ f\in X,
\]
with some constant $C$, $0<C\leq\max\{1,\frac1{a}\}$, independent
of $f$. For more details, see~\citep[Chapter~3, Proposition~5.11]{BS}.

Basic examples of function norms are those associated with the standard
Lebesgue spaces $L^ p$. For $p\in(0,\infty]$, we define the functional $\vr_p$
by
\[
\vr_p(f)=\|f\|_p=
\begin{cases}
\left(\int_Rf^ p\,\d\mu\right)^{\frac1p}, &\quad0<p<\infty,\\
\esssup_{R}f,&\quad p=\infty,
\end{cases}
\]
for $f \in \M_+(R,\mu)$. If $p\in[1,\infty]$, then $\vr_p$ is a rearrangement-invariant function norm.

If $0< p,q\le\infty$, we define the functional
$\vr_{p,q}$ by
\[
\vr_{p,q}(f)=\|f\|_{p,q}=
\left\|s^{\frac{1}{p}-\frac{1}{q}}f^*(s)\right\|_{q}
\]
for $f \in \M_+(R,\mu)$. The set $L^{p,q}$, defined as the collection of all $f\in\M(R,\mu)$ satisfying $\vr_{p,q}(|f|)<\infty$, is called a~\textit{Lorentz space}. If $1<p<\infty$ and $1\leq q\leq\infty$, $p=q=1$, or $p=q=\infty$, then $\vr_{p,q}$ is equivalent to a~rearrangement-invariant function norm in the sense that there exists a~rearrangement-invariant norm $\sigma$ and a~constant $C$, $0<C<\infty$, depending on $p,q$ but independent of $f$, such that
\[
C^{-1}\sigma(f)\leq \vr_{p,q}(f)\leq C\sigma(f).
\]
As a~consequence, $L^{p,q}$ is considered to be a~rearrangement-invariant space for the above specified cases of $p,q$ (see~\citep[Chapter~4]{BS}). If either $0<p<1$ or $p=1$ and $q>1$, then $L^{p,q}$ is a~quasi-normed space. If $p=\infty$ and $q<\infty$, then $L^{p,q}=\{0\}$. For every $p\in[1,\infty]$, we have $L^{p,p}=L^{p}$. Furthermore, if $p,q,r\in(0,\infty]$ and $q\leq r$, then the inclusion $L^{p,q}\subset L^{p,r}$ holds.

If $\A=[\alpha_0,\alpha_{\infty}]\in\R^ 2$ and $t\in\R$, then we shall use the notation $\A+t=[\alpha_0+t,\alpha_{\infty}+t]$ and $t\A=[t\alpha_0, t\alpha_{\infty}]$.

Let $0<p,q\le\infty$, $\A=[\alpha_0,\alpha_{\infty}]\in\R^ 2$ and $\B=[\beta_0,\beta_{\infty}]\in\R^ 2$. Then we define the
functionals $\vr_{p,q;\A}$ and $\vr_{p,q;\A,\B}$ on $\M_+(R,\mu)$ by
\[
\vr_{p,q;\A}(f)=
\left\|t^{\frac{1}{p}-\frac{1}{q}}\ell^{\A}(t) f^*(t)\right\|_{L^ q(0,\infty)}
\]
and
\[
\vr_{p,q;\A,\B}(f)=
\left\|t^{\frac{1}{p}-\frac{1}{q}}\ell^{\A}(t)\ell\ell^{\B}(t) f^*(t)\right\|_{L^ q(0,\infty)},
\]
where
\[
\ell^{\A}(t)=
\begin{cases}
(1-\log t)^{\alpha_0}, &\quad t\in(0,1),\\
(1+\log t)^{\alpha_{\infty}}, &\quad t\in[1,\infty),
\end{cases}
\]
and
\[
\ell\ell^{\B}(t)=
\begin{cases}
(1+\log (1-\log t))^{\beta_0}, &\quad t\in(0,1),\\
(1+\log (1+\log t))^{\beta_{\infty}}, &\quad t\in[1,\infty).
\end{cases}
\]
The set $L^{p,q;\A}$, defined as the collection of all $f\in\M(R,\mu)$ satisfying $\vr_{p,q;\A}(|f|)<\infty$, is called a~\textit{Lorentz--Zygmund space}, and the set $L^{p,q;\A,\B}$, defined as the collection of all $f\in\M_+(R,\mu)$ satisfying $\vr_{p,q;\A,\B}(|f|)<\infty$, is called a~\textit{generalized Lorentz--Zygmund space}. The functions of the form $\ell^{\A}$, $\ell\ell^{\B}$ are called \textit{broken logarithmic functions}. It can be shown (\citep[Theorem~7.1]{OP}) that the functional $\vr_{p,q;\A}$ is equivalent to a rearrangement-invariant function norm if and only if
\begin{equation}\label{thm:optimalrangeexamples-condonglztobebfs}\begin{cases}
&p=q=1,\ \alpha_0\geq0,\ \alpha_\infty\leq0\ \text{or}\\
&p\in(1,\infty)\ \text{or}\\
&p=\infty,\ q\in[1,\infty),\ \alpha_0 + \frac1{q} < 0\ \text{or}\\
&p=q=\infty,\ \alpha_0\leq0.
\end{cases}
\end{equation}

The spaces of this type proved to be quite useful since they provide a common roof for many customary spaces. These include not only Lebesgue spaces and Lorentz spaces, by taking $\A=[0,0]$, but also all types of exponential and logarithmic Zygmund classes, and also the spaces discovered independently by Maz'ya (in a~somewhat implicit form involving capacitary estimates~\citep[pp.~105 and~109]{Mabook}), Hansson~\citep{Ha} and Br\'ezis--Wainger~\citep{BW}, who used it to describe the sharp target space in a limiting Sobolev embedding (the spaces can be also traced in the works of Brudnyi~\citep{B} and, in a more general setting, Cwikel and Pustylnik~\citep{CP}). One of the benefits of using broken logarithmic functions consists in the fact that the underlying measure space can be considered to have either finite or infinite measure. For the detailed study of (generalized) Lorentz--Zygmund spaces we refer the reader to~\citep{EOP,EOP-broken,OP, FS}. In some examples in Section~\ref{sec:riexamples} we shall need more than two layers of logarithms. Such spaces are defined as a straightforward extension of the spaces defined above.

A convex, neither identically zero nor infinity, left-continuous function $A\colon [0, \infty ) \to [0, \infty ]$ vanishing
at $0$ is called a \emph{Young function}. Hence any Young function can be expressed in the form
\begin{equation}\label{pre:integralformofYoungfunction}
A(t) = \int _0^t a(s ) \d s \qquad \quad \text{for $t \geq 0$},
\end{equation}
 for some nondecreasing, left-continuous function $a\colon [0, \infty )\to
[0, \infty ]$. For a Young function $A$ we define the \emph{Luxemburg function norm} $\|\cdot \|_{L^A}$ as
\begin{equation*}
\|f\|_{L^A}= \inf \left\{ \lambda >0\colon\int_R A\left(
\frac{f(x)}{\lambda} \right) \d\mu(x) \leq 1 \right\},\ f \in {\Mpl(R,\mu)}.
\end{equation*}
The corresponding rearrangement-invariant space $L^A$ is called an \emph{Orlicz space}. In particular,
$L^A= L^p$ if $A(t)= t^p$ when $p \in [1, \infty )$ and $L^A= L^\infty$ if $A(t)=0$ for $t\in [0, 1]$ and
$A(t) = \infty$ for $t>1$.

The associate space of an Orlicz space $L^A$ is equivalent to another Orlicz space $L^{\widetilde{A}}$ where $\widetilde{A}$ is the \emph{Young conjugate function} of $A$, which is a Young function again, defined by
\begin{equation*}
\widetilde{A}(t) = \sup\limits_{0\leq s<\infty}\left(st-A(s)\right).
\end{equation*}

We say that a Young function $A$ \emph{dominates} a Young function $B$ \emph{near zero} or \emph{near infinity} if there exist positive constants $c$ and $t_0$ such that
\begin{equation*}
B(t)\leq A(ct)\quad\text{for all $t\in[0,t_0]$ or for all $t\in[t_0,\infty)$, respectively.}
\end{equation*}
We say that two Young functions $A$ and $B$ are \emph{equivalent near zero} or \emph{near infinity} if they dominate each other near zero or near infinity, respectively. We say that they are \emph{equivalent globally} if they are equivalent near zero and equivalent near infinity simultaneously.

If, for a nonnegative measurable function $F$ on $(0,\infty)$, there exists $t_0>0$ such that $\int_0^{t_0}F(s)\,\d s<\infty$ or $\int_{t_0}^\infty F(s)\,\d s<\infty$, respectively, we shortly write that
\begin{equation*}
\int_0 F(s)\,\d s<\infty\text{ or }\int^\infty F(s)\,\d s<\infty\text{, respectively.}
\end{equation*}

If $A$ is a Young function, we define the function $h_A\colon(0,\infty)\rightarrow[0,\infty)$ by
\begin{equation*}
h_A(t)=\sup\limits_{0<s<\infty}\frac{A^{-1}(st)}{A^{-1}(s)},\ t>0,
\end{equation*}
and we set
\begin{equation}\label{thm:optimalorliczdomain:Boydlow}
i_A = \sup\limits_{1<t<\infty}\frac{\log t}{\log h_A(t)}
\end{equation}
and
\begin{equation}\label{thm:optimalorliczdomain:Boydupp}
I_A = \inf\limits_{0<t<1}\frac{\log t}{\log h_A(t)}.
\end{equation}
The quantities $i_A$ and $I_A$ are called the \emph{lower Boyd index of $A$} and the \emph{upper Boyd index of $A$}, respectively, and it can be shown that $1\leq i_A\leq I_A\leq\infty$, $i_A=\lim\limits_{t\rightarrow\infty}\frac{\log t}{\log h_A(t)}$ and $I_A=\lim\limits_{t\rightarrow0^+}\frac{\log t}{\log h_A(t)}$. We refer the interested reader to \citep{MR0126722, MR1113700} for more details on Orlicz spaces and to \citep{BS, MR0212512, MR0306887} for more details on Boyd indices.

A common extension of  Orlicz and Lorentz spaces is provided by the
family of \emph{Orlicz-Lorentz spaces}. Given $p\in (1, \infty)$, $q\in
[1, \infty )$ and  a Young function $A$ such that
\begin{equation}\label{pre:conditiononAforOrliczLorentz}
\int^\infty \frac{A(t)}{t^{1+p}}\,\d t < \infty,
\end{equation}
we denote by $\|\cdot \|_{L(p,q,A)}$ the Orlicz-Lorentz
rearrangement-invariant function norm defined as
\begin{equation}\label{pre:orliczlorentznormdef}
\|f\|_{L(p,q,A)} = \left\|t^{-\frac1{p}} f^*(t^{\frac1{q}})\right\|_{L^A(0, \mu(R))},\quad f \in\Mpl(R,\mu).
\end{equation}
The fact that \eqref{pre:orliczlorentznormdef} actually defines  a  rearrangement-invariant function norm follows from simple variants in the proof
of \citep[Proposition 2.1]{MR2073127}. We denote by $L(p,q,A)$ the Orlicz-Lorentz space associated with the rearrangement-invariant function norm $\|\cdot
\|_{L(p,q, A)}$. Note that the class of Orlicz-Lorentz spaces includes (up to equivalent norms) the Orlicz spaces and various instances of Lorentz and Lorentz-Zygmund spaces.

In what follows we shortly denote the Lebesgue measure of a measurable set $E$ by $|E|$.

We shall work with \emph{Sobolev-type spaces built upon rearrangement-invariant spaces}. If $m\in\N$ and $u$ is a $m$-times weakly differentiable function on $\rn$, we denote by $\nabla^k u$, for $k\in\{0, 1,\dots, m\}$, the vector of all weak derivatives of order $k$ of $u$, where $\nabla^0 u = u$. If $X$ is a rearrangement-invariant space over $\rn$, we define spaces $V^m X(\rn)$ and $V_0^m X(\rn)$ by
\begin{align*}
V^m X(\rn) &=\{u\colon \rn\rightarrow\R\colon\text{$u$ is $m$-times weakly differentiable and }|\nabla^m u|\in X\},\\
V_0^m X(\rn) &=\{u\in V^m X(\rn)\colon|\{x\in\rn\colon |\nabla^ku(x)|>\lambda\}|<\infty\ \text{for $k\in\{0,1,\dots,m - 1\}$ and $\lambda > 0$}\} .
\end{align*}
We stress the fact that, for a function from $V^m X(\rn)$, only its $m$-th order derivatives are required to be elements of $X$, whereas there are no assumptions imposed on its derivatives of lower orders. The derivatives of lower orders are not required to have any regularity, we merely assume that they exist. We also write $\|\nabla^k u\|_X$ instead of $\||\nabla^k u|\|_X$ for the sake of brevity, where $|\nabla^k u|$ is the $\ell^1$-norm of the vector $\nabla^k u$.

Throughout the paper the convention that $\frac1{\infty}=0$ and $0\cdot \infty
=0$ is used without further explicit reference. We write $A\lesssim B$ when $A\leq \text{constant}\cdot B$ where the constant is independent of appropriate
quantities appearing in expressions $A$ and $B$. Similarly, we write $A\gtrsim B$ with the obvious meaning. We also write $A\approx B$ when $A\lesssim B$ and $A\gtrsim B$ simultaneously.

We say that a rearrangement-invariant space $Y$ over $\rn$ is the \emph{optimal target space} (within the class of rearrangement-invariant spaces) for a rearrangement-invariant space $X$ over $\rn$ in \eqref{intr:homoemb} if \eqref{intr:homoemb} is satisfied and whenever \eqref{intr:homoemb} is satisfied for another rearrangement-invariant space $Z$ over $\rn$ in place of $Y$, $Z$ is larger than $Y$, that is, $Y\hookrightarrow Z$. We say that a rearrangement-invariant space $X$ over $\rn$ is the \emph{optimal domain space} (within the class of rearrangement-invariant spaces) for a rearrangement-invariant space $Y$ over $\rn$ in \eqref{intr:homoemb} if \eqref{intr:homoemb} is satisfied and whenever \eqref{intr:homoemb} is satisfied for another rearrangement-invariant space $Z$ over $\rn$ in place of $X$, $Z$ is smaller than $X$, that is, $Z\hookrightarrow X$.

\section{Main results}\label{sec:mainresults}
Our first theorem characterizes when, for a given rearrangement-invariant space $X$ over $\rn$, there exists a rearrangement-invariant space $Y$ over $\rn$ that renders \eqref{intr:homoemb} true by a condition on the associate space of $X$, and if the condition is satisfied, it provides a description of the optimal target space for $X$.
\begin{theorem}\label{thm:optimalrange}
Assume that $m < n$ and let $X$ be a rearrangement-invariant space over $\rn$ such that
\begin{equation}\label{thm:optimalrange:cond}
t^{\frac{m}{n} - 1}\chi_{(1,\infty)}(t)\in X'(0,\infty).
\end{equation}
Define the functional $\sigma_m$ by
\begin{equation}\label{thm:optimalrange:sigma}
	\s_m(f)=\|t^\frac{m}{n}f^{**}(t)\|_{X'(0,\infty)},\ f\in\Mpl(\rn).
\end{equation}
Then $\sigma_m$ is a~rearrangement-invariant norm and there exists a positive constant $C$, which depends on $m$ and on the dimension $n$ only, such that
\begin{equation}\label{thm:optimalrange:vnoreni}
\|u\|_{X^m}\leq C \|\nabla^m u\|_{X}\quad\text{for each $u\in V_0^m X(\rn)$}
\end{equation}
where $X^m=X^m(\sigma_m')$. Moreover, $X^m$ is the optimal \textup{(}smallest\textup{)} target space for $X$ in \eqref{intr:homoemb}.

Conversely, if~\eqref{thm:optimalrange:cond} is not true, then there does not exist any rearrangement-invariant space $Y$ for which~\eqref{intr:homoemb} is true at all.
\end{theorem}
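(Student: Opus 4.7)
The argument breaks into four steps: (i) verifying that $\sigma_m$ is an r.i.~norm, (ii) establishing \eqref{thm:optimalrange:vnoreni}, (iii) proving optimality of $X^m$, and (iv) handling the converse. For (i), the axioms (P1)--(P3) and (P6) are inherited from the corresponding properties of $f\mapsto f^{**}$ (sublinearity via \eqref{E:subadditivity-of-doublestar-a}; Fatou property from $f_j\nearrow f\Rightarrow f_j^{**}\nearrow f^{**}$; rearrangement-invariance by construction) combined with those of the r.i.~norm $\|\cdot\|_{X'(0,\infty)}$. The only axiom that genuinely uses the hypothesis is the non-triviality (P4): since $(\chi_E)^{**}(t)=\min\{1,|E|/t\}$, one has
\begin{equation*}
t^{m/n}(\chi_E)^{**}(t)=t^{m/n}\chi_{(0,|E|)}(t)+|E|\,t^{m/n-1}\chi_{(|E|,\infty)}(t),
\end{equation*}
and finiteness of its $X'(0,\infty)$-norm follows, after a dilation, from \eqref{thm:optimalrange:cond} and the boundedness of the dilation operator on $X'(0,\infty)$; the local $L^1$-embedding (P5) is then obtained by testing $\sigma_m$ against characteristic functions of sets of finite measure.

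\textbf{Step (ii): upper bound.} The engine is a Kolyada--Maz'ya type pointwise rearrangement estimate
\begin{equation*}
u^*(t)\leq c_{n,m}\int_t^\infty s^{m/n-1}(|\nabla^m u|)^*(s)\,\d s,\quad t>0,
\end{equation*}
valid on $V_0^m X(\rn)$ because the level-set decay hypothesis forces $u^*(\infty)=0$ and permits an iteration of the first-order symmetrisation bound. Using the rearrangement representation $\|u\|_{X^m}=\sigma_m'(u^*)=\sup\{\int_0^\infty u^*(t)g^*(t)\,\d t\colon\sigma_m(g)\leq 1\}$ together with Fubini,
\begin{equation*}
\int_0^\infty u^*(t)g^*(t)\,\d t\leq c_{n,m}\int_0^\infty (|\nabla^m u|)^*(s)\,s^{m/n}g^{**}(s)\,\d s\leq c_{n,m}\|\nabla^m u\|_X\,\sigma_m(g),
\end{equation*}
the last step being H\"older's inequality in the duality $(X,X')$ on $(0,\infty)$; taking the supremum over admissible $g$ produces \eqref{thm:optimalrange:vnoreni}.

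\textbf{Step (iii): optimality.} The pointwise estimate is essentially sharp on radially nonincreasing data. For each nonnegative, nonincreasing, compactly supported $h$ on $(0,\infty)$ I construct an explicit radial $u_h\in V_0^m X(\rn)$ with $(|\nabla^m u_h|)^*\approx h$ and $u_h^*(t)\approx\int_t^\infty s^{m/n-1}h(s)\,\d s$, the constants depending only on $n$ and $m$. Plugging $u_h$ into any hypothetical embedding $\|u\|_Y\leq C\|\nabla^m u\|_X$ and letting $h$ vary yields, uniformly in $h$, the boundedness of the Hardy-type operator $h\mapsto\int_t^\infty s^{m/n-1}h(s)\,\d s$ from the cone of nonincreasing functions in $X(0,\infty)$ into $Y(0,\infty)$. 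Dualising exactly as in step (ii) converts this to $\sigma_m(g)\leq C'\|g\|_{Y'(0,\infty)}$ for every nonnegative $g$, i.e., $Y'\hookrightarrow(X^m)'$; the associate-space equivalence of \eqref{pre:embedding} and \eqref{pre:embeddingdual}, combined with $\sigma_m''=\sigma_m$, translates this into the desired embedding $X^m\hookrightarrow Y$.

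\textbf{Converse and main obstacle.} If \eqref{thm:optimalrange:cond} fails, dual characterisation on $(0,\infty)$ produces a nonnegative nonincreasing $h\in X(0,\infty)$ with $\int_1^\infty s^{m/n-1}h(s)\,\d s=\infty$; the radial $u_h$ of step (iii) then satisfies $\|\nabla^m u_h\|_X<\infty$ while $u_h^*\equiv\infty$ on an initial interval, so $\|u_h\|_Y=\infty$ for every r.i.~space $Y$, contradicting \eqref{intr:homoemb}. The principal technical obstacle throughout is the extremal construction used in step (iii): one must produce $u_h$ so that both $(|\nabla^m u_h|)^*$ and $u_h^*$ match the prescribed profiles simultaneously and \emph{uniformly in} $h$, which requires an explicit integral-kernel representation of $u_h$ (typically via iterated radial primitives of $h$), careful derivative estimates at every order $\leq m$, and a truncation argument ensuring membership in $V_0^m X(\rn)$.
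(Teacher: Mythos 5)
The pointwise rearrangement estimate you invoke,
\begin{equation*}
u^*(t)\leq c_{n,m}\int_t^\infty s^{m/n-1}(|\nabla^m u|)^*(s)\,\d s,\quad t>0,
\end{equation*}
is false even for $m=1$, and the whole duality computation in Step~(ii) collapses without it. Already for $n=2$, take the radial function $u(x)=\phi(|x|)$ with $\phi\equiv 1$ on $[0,1]$, $\phi(r)=2-r$ on $[1,2]$, $\phi\equiv0$ on $[2,\infty)$. Then $|\nabla u|=\chi_{\{1<|x|<2\}}$, so $(|\nabla u|)^*=\chi_{(0,3\pi)}$, and the right-hand side is identically zero for $t\geq 3\pi$; yet $u^*(t)=2-\sqrt{t/\pi}>0$ on $(3\pi,4\pi)$. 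No constant $c_{n,m}$ can salvage the inequality. The Talenti--Maz'ya estimate that is actually available reads $u^{**}(t)-u^*(t)\leq c_n\,t^{1/n}(|\nabla u|)^{**}(t)$ and hence produces, after integration, the maximal function $(|\nabla u|)^{**}$ (or, equivalently, an extra term $t^{1/n}(|\nabla u|)^{**}(t)$) on the right; but with $(|\nabla^m u|)^{**}$ in place of $(|\nabla^m u|)^*$ your H\"older step $\int_0^\infty(|\nabla^m u|)^{**}(s)\,s^{m/n}g^{**}(s)\,\d s\leq\|\nabla^m u\|_X\,\sigma_m(g)$ is no longer just the $(X,X')$ duality and does not go through. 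The paper avoids this trap by \emph{not} attempting a pointwise bound of $u^*$ by $(|\nabla u|)^*$: instead it writes $u^*(t)=\int_t^\infty\bigl(-\frac{\d u^*}{\d s}(s)s^{1-1/n}\bigr)s^{1/n-1}\,\d s$ (local absolute continuity of $u^*$ plus $u^*(\infty)=0$), applies the already-established one-dimensional inequality \eqref{thm:eq:equivalentformsofonedimensionalinequality:reduction} with $Y=X^1$, and then uses the generalized P\'olya--Szeg\H{o} principle as a \emph{norm}-level inequality $\|s^{1-1/n}(-\frac{\d u^*}{\d s})(s)\|_{X(0,\infty)}\lesssim\|\nabla u\|_X$. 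The passage from $m=1$ to general $m$ is then done by induction, using the iteration principle (Theorem~\ref{thm:iterationprinciple}) to identify $(X^{m-1})^1$ with $X^m$; your claim that the first-order bound ``permits an iteration'' to a pointwise $m$-th order estimate skips over precisely the step that the iteration principle is designed to justify.

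Your Steps~(i), (iii) and (iv) are in line with the paper's argument: the r.i.~norm verification, the radial extremal construction via iterated primitives with derivative estimates at every order, the resulting reduction to the one-dimensional Hardy inequality and the duality translation $Y'\hra(X^m)'$, and the converse via the necessary Muckenhoupt-type condition all mirror the proof given. But as written, Step~(ii) rests on a false inequality and needs to be replaced by the $(-\d u^*/\d s)$-decomposition plus the iteration principle (or an equivalent argument that correctly accounts for the $**$ on the gradient).
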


We note that~\eqref{thm:optimalrange:cond} holds, for instance, for every $X=L^{p}$ with $p\in[1,\frac{n}{m})$ or for $X=L^{\frac{n}{m},1}$.

A somewhat surprising property of optimal target spaces is that they are stable under iteration (cf.~\citep[Theorem~1.5]{CPnew}, \citep[Theorem~5.7]{CPS}). This \emph{iteration principle} is the content of the following theorem.
\begin{theorem}\label{thm:iterationprinciple}
Let $k$ and $l$ be natural numbers such that $k+l < n$. Assume that $X$ is a~rearrangement-invariant space over $\rn$ such that \eqref{thm:optimalrange:cond} holds with $m = k + l$. Then \eqref{thm:optimalrange:cond} holds also with $m = k$, $t^{\frac{l}{n} - 1}\chi_{(1,\infty)}(t)\in (X^k)'(0,\infty)$ and the norms on $(X^k)^l$ and $X^{k+l}$ are equivalent, where the constants of the equivalence depend on $m$ and on the dimension $n$ only.
\end{theorem}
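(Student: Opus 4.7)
The plan is to verify the two ``condition'' claims and then the norm equivalence, tackled in both directions.

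For condition \eqref{thm:optimalrange:cond} with $m=k$: since $l>0$ and $k+l<n$, we have $k/n-1<(k+l)/n-1<0$, so $t^{k/n-1}\le t^{(k+l)/n-1}$ for $t\ge 1$, and the lattice axiom~(P2) for $X'$ transfers the hypothesis. For the claim that $g(t):=t^{l/n-1}\chi_{(1,\infty)}(t)$ lies in $(X^k)'(0,\infty)$, a direct distribution-function calculation gives $g^{*}(s)=(s+1)^{l/n-1}$, so $g^{**}(s)\approx\min\{1,s^{l/n-1}\}$ with constants depending only on $l/n$; then $\sigma_k(g)=\|s^{k/n}g^{**}(s)\|_{X'(0,\infty)}$ is controlled by $\|s^{k/n}\chi_{(0,1)}\|_{X'}+\|s^{(k+l)/n-1}\chi_{(1,\infty)}\|_{X'}$, which are finite by (P4) and by hypothesis, respectively.

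With both conditions in place, Theorem~\ref{thm:optimalrange} applies to $X$ with $m=k$ and to $X^k$ with $m=l$. For the inclusion $X^{k+l}\hookrightarrow(X^k)^l$ I iterate: given $u\in V_0^{k+l}X(\rn)$, each scalar component $w$ of $\nabla^l u$ belongs to $V_0^k X(\rn)$, since the components of $\nabla^k w$ sit inside $\nabla^{k+l}u\in X$ and the low-order decay is inherited from $u$. Applying Theorem~\ref{thm:optimalrange} componentwise with $m=k$ and summing yields $\|\nabla^l u\|_{X^k}\lesssim\|\nabla^{k+l}u\|_X$; a second application with $m=l$ and the space $X^k$ produces $\|u\|_{(X^k)^l}\lesssim\|\nabla^l u\|_{X^k}\lesssim\|\nabla^{k+l}u\|_X$, whence the optimality of $X^{k+l}$ from Theorem~\ref{thm:optimalrange} forces $X^{k+l}\hookrightarrow(X^k)^l$.

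The reverse inclusion $(X^k)^l\hookrightarrow X^{k+l}$ is where the main technical work lies. Passing to associate norms, let $h(s)=s^{l/n}f^{**}(s)$, so $\sigma_{k+l}(f)=\|s^{k/n}h(s)\|_{X'(0,\infty)}$ and $\tau_l(f)=\|s^{k/n}h^{**}(s)\|_{X'(0,\infty)}$; the goal reduces to $\tau_l(f)\lesssim\sigma_{k+l}(f)$. Using $\int_0^t h^*=\sup_{|E|=t}\int_E h$, a Fubini interchange and the monotonicity of $s\mapsto s^{l/n-1}$, combined with the elementary estimates $(v+t)^{l/n}\le 2^{l/n}t^{l/n}$ for $v\le t$ and $(v+t)^{l/n}-v^{l/n}\le (l/n)v^{l/n-1}t$ for $v\ge t$, yield the pointwise bound
\[
h^{**}(t)\lesssim h(t)+\int_t^\infty v^{l/n-1}f^*(v)\,\d v.
\]
The first summand contributes $\lesssim\sigma_{k+l}(f)$ at once. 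For the tail $T(t)=\int_t^\infty v^{l/n-1}f^*(v)\,\d v$, the substitution $v=us$ rewrites $s^{k/n}T(s)$ as $\int_1^\infty u^{-k/n-1}G(us)\,\d u$ with $G(r)=r^{(k+l)/n}f^*(r)$; Minkowski's integral inequality in $X'(0,\infty)$ together with the dilation bound $\|D_uG\|_{X'}\le\|G\|_{X'}$ valid for $u\ge 1$ then gives $\|s^{k/n}T(s)\|_{X'}\le(n/k)\|G\|_{X'}\le(n/k)\sigma_{k+l}(f)$. The main obstacle is precisely this reverse direction: one has to decompose $h^{**}$ pointwise (necessary because $h$ is not itself nonincreasing) and then control the resulting tail in the weighted $X'$-norm, the latter step relying on the dilation boundedness on rearrangement-invariant spaces over $(0,\infty)$ as the decisive mechanism.
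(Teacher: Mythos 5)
Your verification of the two membership conditions matches the paper's in substance, and your pointwise decomposition $h^{**}(t)\lesssim t^{l/n}f^{**}(t)+\int_t^\infty v^{l/n-1}f^*(v)\,\d v$ together with the substitution/Minkowski/dilation estimate for the tail is correct and actually more self-contained than what the paper offers: the paper reduces the norm equivalence entirely to Proposition~\ref{prop:iterationprinciple}, citing \citep[Theorem~3.4]{CPnew} (adapted to $(0,\infty)$) for the hard direction $(X^k)^l\hookrightarrow X^{k+l}$ without reproducing the argument, whereas you supply a direct one-dimensional proof of it.

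The trouble is with the other inclusion $X^{k+l}\hookrightarrow(X^k)^l$. You establish it by iterating the Sobolev inequality from Theorem~\ref{thm:optimalrange} (for $m=k$, then $m=l$ on $X^k$, then invoking the optimality of $X^{k+l}$). But in the paper's logical ordering, Theorem~\ref{thm:optimalrange} for order $m$ is proved \emph{using} Theorem~\ref{thm:iterationprinciple} for the pair $(m-1,1)$ of sum $m$; in particular, invoking the Sobolev embedding or optimality of $X^{k+l}$ (sum $k+l$) while proving Theorem~\ref{thm:iterationprinciple} for the pair $(k,l)$ of the same sum is circular as stated. The paper avoids this entirely: the inclusion $X^{k+l}\hookrightarrow(X^k)^l$ follows from the \emph{easy} (second) inequality of Proposition~\ref{prop:iterationprinciple}, which is a short direct estimate — essentially $t^{\frac{\alpha+\beta}{n}}f^{**}(t)\approx t^{\frac{\alpha}{n}-1}\int_t^{2t}\tau^{\frac{\beta}{n}}f^{**}(\tau)\,\d\tau\le t^{\frac{\alpha}{n}}[\tau^{\frac{\beta}{n}}f^{**}(\tau)]^{**}(t)$ by Hardy--Littlewood — with no Sobolev input at all. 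To salvage your route you would have to observe that only the test-function/optimality half of the proof of Theorem~\ref{thm:optimalrange} (which is Sobolev-free) is needed at level $k+l$, and set up an explicit joint induction for the two theorems together; otherwise it is cleaner to replace your forward-direction argument with the elementary one-dimensional computation above, keeping the theorem purely one-dimensional as in the paper.
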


The following theorem establishes the \emph{reduction principle} for the inequality \eqref{intr:homoemb}.
\begin{theorem}\label{thm:reductionprinciple}
Assume that $m < n$ and let $X$ and $Y$ be rearrangement-invariant spaces over $\rn$. Then the following three inequalities are equivalent:
\begin{align}
\|u\|_Y&\leq C_1 \|\nabla^m u\|_X&\quad\text{for each $u\in V_0^m X(\rn)$};\label{thm:eq:reductionprinciple:embedding}\\
\left\|\int_t^\infty f(s)s^{\frac{m}{n} - 1}\, \d s\right\|_{Y(0,\infty)}&\leq C_2 \|f\|_{X(0,\infty)}&\quad\text{for each $f\in\Mpl(0,\infty)$};\label{thm:eq:equivalentformsofonedimensionalinequality:reduction}\\
\|t^{\frac{m}{n}}g^{**}(t)\|_{X'(0,\infty)}&\leq C_2 \|g\|_{Y'(0,\infty)}&\quad\text{for each $g\in\Mpl(0,\infty)$},\label{thm:eq:equivalentformsofonedimensionalinequality:reductiondual}
\end{align}
where the positive constants $C_1$ and $C_2$ depend on each other, on $m$ and on the dimension $n$ only.
\end{theorem}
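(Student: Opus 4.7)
The plan is to verify the three inequalities in a cycle broken into two easier subproblems: the duality (\ref{thm:eq:equivalentformsofonedimensionalinequality:reduction}) $\Leftrightarrow$ (\ref{thm:eq:equivalentformsofonedimensionalinequality:reductiondual}) is a short Fubini-plus-associate-space computation, whereas (\ref{thm:eq:reductionprinciple:embedding}) $\Leftrightarrow$ (\ref{thm:eq:equivalentformsofonedimensionalinequality:reduction}) requires both a pointwise symmetrization estimate (for the easy direction) and the construction of sharp radial test functions (for the hard direction, which is where I expect the real work to sit).

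For (\ref{thm:eq:equivalentformsofonedimensionalinequality:reduction}) $\Leftrightarrow$ (\ref{thm:eq:equivalentformsofonedimensionalinequality:reductiondual}), I would invoke Fubini: for every $f,g\in\Mpl(0,\infty)$,
\begin{equation*}
\int_0^\infty g(t)\int_t^\infty f(s) s^{\frac{m}{n}-1}\,\d s\,\d t = \int_0^\infty f(s) s^{\frac{m}{n}} g^{**}(s)\,\d s.
\end{equation*}
Combined with the norm representation $\|h\|_{Y(0,\infty)}=\sup\{\int_0^\infty h g\,\d t\colon g\in\Mpl(0,\infty),\ \|g\|_{Y'(0,\infty)}\leq 1\}$ recalled in Section~\ref{sec:prel} (and its analogue for $X'$), this makes (\ref{thm:eq:equivalentformsofonedimensionalinequality:reduction}) and (\ref{thm:eq:equivalentformsofonedimensionalinequality:reductiondual}) equivalent with the same constant, since any supremum over $g\in\Mpl(0,\infty)$ can be restricted to nonincreasing $g$ (so $g=g^*$) without affecting the value.

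For (\ref{thm:eq:equivalentformsofonedimensionalinequality:reduction}) $\Rightarrow$ (\ref{thm:eq:reductionprinciple:embedding}), I would rely on the pointwise rearrangement estimate
\begin{equation*}
u^*(t) \leq C \int_t^\infty (|\nabla^m u|)^*(s)\, s^{\frac{m}{n}-1}\,\d s,\quad t>0,
\end{equation*}
valid for every $u\in V_0^m X(\rn)$ with $C=C(m,n)$. In the case $m=1$ this is Talenti's symmetrization inequality, which fuses the coarea formula with the isoperimetric inequality on $\rn$ and uses precisely the decay built into $V_0^1$ to identify the constant of integration as zero; for $m\geq 2$ one iterates the estimate on the lower-order gradients, whose decay is again guaranteed by the definition of $V_0^m X$. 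Taking $Y(0,\infty)$-norms of both sides and applying (\ref{thm:eq:equivalentformsofonedimensionalinequality:reduction}) to $f=(|\nabla^m u|)^*$ then produces (\ref{thm:eq:reductionprinciple:embedding}).

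For the remaining implication (\ref{thm:eq:reductionprinciple:embedding}) $\Rightarrow$ (\ref{thm:eq:equivalentformsofonedimensionalinequality:reduction}), I would test (\ref{thm:eq:reductionprinciple:embedding}) against carefully chosen radial Sobolev functions. By monotone convergence it suffices to consider $f\in\Mpl(0,\infty)$ nonincreasing and compactly supported. For $m=1$, set
\begin{equation*}
u_f(x)=\int_{|x|}^\infty f(\omega_n r^n)\,\d r,\quad x\in\rn,
\end{equation*}
where $\omega_n=|B_1|$. A direct computation yields $|\nabla u_f(x)|=f(\omega_n|x|^n)$, hence $(|\nabla u_f|)^*=f$, while the substitution $\tau=\omega_n r^n$ gives
\begin{equation*}
u_f^*(t)=\frac{1}{n\omega_n^{1/n}}\int_t^\infty f(\tau)\tau^{\frac{1}{n}-1}\,\d\tau.
\end{equation*}
Radial monotonicity and the compact support of $f$ ensure $u_f\in V_0^1 X(\rn)$, and substituting into (\ref{thm:eq:reductionprinciple:embedding}) delivers (\ref{thm:eq:equivalentformsofonedimensionalinequality:reduction}). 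For $m\geq 2$ the natural candidate is the $m$-fold iterated radial primitive of $f(\omega_n r^n)$; here lies the main obstacle, because the gradient tensors of a radial function $v(|x|)$ involve mixed contributions of the form $v^{(k)}(|x|)/|x|^{m-k}$ for $k<m$, and one must show that these subordinate terms do not spoil the rearrangement comparison with $|v^{(m)}(|x|)|$ and that $u_f$ together with all its derivatives of order $<m$ have level sets of finite measure, i.e.\ $u_f\in V_0^m X(\rn)$. Once this bookkeeping is established, the conclusion follows as in the first-order case and a final monotone convergence argument removes the compact-support reduction.
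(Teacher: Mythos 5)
Your outline is close to the paper in spirit but one of the three legs rests on a false estimate, and another is acknowledged to be incomplete.

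The duality step \eqref{thm:eq:equivalentformsofonedimensionalinequality:reduction} $\Leftrightarrow$ \eqref{thm:eq:equivalentformsofonedimensionalinequality:reductiondual} is exactly the paper's Proposition~\ref{prop:equivalentformsofonedimensionalinequality} (Fubini plus the associate-norm representation); this part is fine.

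The direction \eqref{thm:eq:equivalentformsofonedimensionalinequality:reduction} $\Rightarrow$ \eqref{thm:eq:reductionprinciple:embedding} is where there is a genuine gap: the pointwise inequality
\[
u^*(t)\leq C\int_t^\infty\bigl(|\nabla^m u|\bigr)^*(s)\,s^{\frac{m}{n}-1}\,\d s
\]
is \emph{false}, already for $m=1$. Take $u(x)=\phi(|x|)$ with $\phi(r)=1$ on $[0,1]$, $\phi(r)=2-r$ on $[1,2]$, $\phi=0$ beyond $2$. Then $|\nabla u|=\chi_{\{1<|x|<2\}}$, so $(|\nabla u|)^*=\chi_{(0,\omega_n(2^n-1))}$, and the right-hand side vanishes for $t\geq\omega_n(2^n-1)$, whereas $u^*(t)=2-(t/\omega_n)^{1/n}>0$ for $t<\omega_n2^n$. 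What symmetrization really gives (and what the paper uses, citing the generalized P\'olya--Szeg\H{o} principle of~\citep[Lemma~4.1]{CP-ARK}) is an $X$-norm bound
\[
\Bigl\|\Bigl(-\tfrac{\d u^*}{\d s}\Bigr)(s)\,s^{1-\frac1n}\Bigr\|_{X(0,\infty)}\lesssim\|\nabla u\|_X,
\]
together with local absolute continuity of $u^*$; one then writes $u^*(t)=\int_t^\infty\bigl(-\tfrac{\d u^*}{\d s}(s)s^{1-\frac1n}\bigr)s^{\frac1n-1}\d s$ and applies \eqref{thm:eq:equivalentformsofonedimensionalinequality:reduction} to the (not necessarily monotone) function $f(s)=-\tfrac{\d u^*}{\d s}(s)s^{1-\frac1n}$. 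The fact that $f$ is not a rearrangement (it vanishes on the plateau of $u^*$) is precisely why \eqref{thm:eq:equivalentformsofonedimensionalinequality:reduction} has to be stated for all $f\in\Mpl(0,\infty)$ and why a pointwise bound involving $(|\nabla u|)^*$ cannot replace the $X$-norm bound. Your plan to feed $f=(|\nabla^m u|)^*$ into \eqref{thm:eq:equivalentformsofonedimensionalinequality:reduction} would only use the restricted version of the one-dimensional inequality, which is genuinely weaker as an assumption than the general one you would need to reconstruct. For $m\geq 2$ the paper circumvents this via the optimal-target machinery of Theorem~\ref{thm:optimalrange}: it proves the base case $m=1$ as above, then uses the iteration principle (Theorem~\ref{thm:iterationprinciple}, built on Proposition~\ref{prop:iterationprinciple}) to pass from $X$ to $X^{m-1}$ and back, rather than iterating a pointwise estimate.

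For \eqref{thm:eq:reductionprinciple:embedding} $\Rightarrow$ \eqref{thm:eq:equivalentformsofonedimensionalinequality:reduction} your test-function strategy is the same as the paper's (the $m$-fold iterated radial primitive of $f(\omega_n r^n)$, with bounds on the mixed terms $g^{(k)}(|x|)/|x|^{m-k}$ controlled by Calder\'on's theorem applied to the operators in \eqref{thm:optimalrange:eq11}). You correctly identify the obstacle but leave it unresolved; the paper carries out the estimates in \eqref{thm:optimalrange:eq5}--\eqref{thm:optimalrange:eq10}, citing~\citep{ACPS} for the combinatorics. Finally, note that the paper proves the statement \emph{indirectly}: it first establishes Theorem~\ref{thm:optimalrange} (where both the symmetrization argument and the test-function construction live), and then Theorem~\ref{thm:reductionprinciple} is a short corollary via the optimal space $X^m$ and the embedding $X^m\hookrightarrow Y$. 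Your direct attack is legitimate in outline but must be repaired at the step discussed above.
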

In fact, the inequality \eqref{thm:eq:equivalentformsofonedimensionalinequality:reduction} is (and so are the other two inequalities) equivalent to the same inequality but restricted to nonincreasing functions only. More precisely, \eqref{thm:eq:equivalentformsofonedimensionalinequality:reduction} is equivalent to (with a possibly different positive constant $C$)
\begin{equation*}
\left\|\int_t^\infty f^*(s)s^{\frac{m}{n} - 1}\, \d s\right\|_{Y(0,\infty)}\leq C \|f\|_{X(0,\infty)}\quad\text{for each $f\in\Mpl(0,\infty)$}.
\end{equation*}
This equivalence is a special case of the general result that originated as a consequence (\citep[Corollary~9.8]{CPS}) of a more general principle established in~\citep[Theorem~9.5]{CPS} in connection with sharp higher-order Sobolev-type embeddings and its extension to unbounded intervals was given in~\citep[Theorem~1.1]{P}.
\begin{remark}\label{rem:relationshipbetweenembeddingandfractional}
There is an intimate connection between the inequality \eqref{intr:homoemb} and the fractional maximal operator $M_\gamma$, which is defined for a fixed $\gamma\in(0,n)$ and for a locally integrable function $f$ on $\rn$ by
\begin{equation*}
M_\gamma f(x)=\sup\limits_{Q\ni x}\frac1{|Q|^{1-\frac{\gamma}{n}}}\int_Q|f(y)|\,\d y,\ x\in\rn,
\end{equation*}
where the supremum is taken over all cubes $Q\subseteq\rn$ whose edges are parallel to the coordinate axes and that contain $x$. If $m < n$, then the inequality \eqref{intr:homoemb} is true for a pair of rearrangement-invariant spaces $X$ and $Y$ if and only if
\begin{equation*}
M_m\colon Y'\rightarrow X'
\end{equation*}
is bounded because it follows from the arguments used in the proof of \citep[Theorem~4.1]{EMMP} that $M_m\colon Y'\rightarrow X'$ is bounded if and only if \eqref{thm:eq:equivalentformsofonedimensionalinequality:reductiondual} is valid, which is equivalent to \eqref{intr:homoemb} by Theorem~\ref{thm:reductionprinciple}.
\end{remark}

Complementing Theorem~\ref{thm:optimalrange}, the following theorem characterizes when for a given rearrangement-invariant space $Y$ over $\rn$, there exists a rearrangement-invariant space $X$ over $\rn$ rendering \eqref{intr:homoemb} true by a condition on the fundamental function of the space $Y$, and if the condition is satisfied, it provides a description of the optimal domain space.
\begin{theorem}\label{thm:optimaldomain}
Assume that $m < n$ and let $Y$ be a rearrangement-invariant space over $\rn$ such that
\begin{equation}\label{thm:optimaldomain:cond}
\inf\limits_{1\leq t < \infty}\frac{t^{1-\frac{m}{n}}}{\varphi_{Y}(t)} > 0.
\end{equation}
Define the functional $\tau_m$ by
\begin{equation}\label{thm:optimaldomain:tau}
	\tau_m(f)=\sup_{\substack{h\sim f\\h\ge0}}
		\left\|\int_t^{\infty}h(s)s^{\frac{m}{n}-1}\,\d s\right\|_{Y(0,\infty)},
		\ f\in\Mpl(\rn),
\end{equation}
where the supremum is taken over all $h\in\Mpl(0,\infty)$ equimeasurable with $f$.
Then $\tau_m$ is a~rearrangement-invariant norm and there exists a positive constant $C$, which depends on $m$ and on the dimension $n$ only, such that
\begin{equation}\label{thm:optimaldomain:vnoreni}
\|u\|_{Y}\leq C \|\nabla^m u\|_{Y_m}\quad\text{for each $u\in V_0^m Y_m(\rn)$}
\end{equation}
where $Y_m=Y_m(\tau)$. Moreover, $Y_m$ is the optimal \textup{(}largest\textup{)} domain space for $Y$ in \eqref{intr:homoemb}.

Conversely, if~\eqref{thm:optimaldomain:cond} is not true, then there does not exist any rearrangement-invariant space $X$ for which~\eqref{intr:homoemb} is true at all.
\end{theorem}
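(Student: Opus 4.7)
The plan is to prove the four separate assertions of the theorem, exploiting Theorem~\ref{thm:reductionprinciple} throughout. Writing $T_h(t):=\int_t^\infty h(s)s^{m/n-1}\,\d s$ for $h\in\Mpl(0,\infty)$, the necessity of~\eqref{thm:optimaldomain:cond} comes from testing the 1D inequality~\eqref{thm:eq:equivalentformsofonedimensionalinequality:reduction} on $f=\chi_{(0,a)}$, $a\geq 1$. A direct computation gives $T_f(t)=\tfrac{n}{m}(a^{m/n}-t^{m/n})_+\chi_{(0,a)}(t)$; restricting the $Y(0,\infty)$-norm to $(0,a/2)$ yields $\|T_f\|_{Y(0,\infty)}\gtrsim a^{m/n}\varphi_Y(a)$, while the right-hand side equals $\varphi_X(a)\leq\varphi_X(1)\,a$ by the quasi-concavity of $\varphi_X$. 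Combining gives $\varphi_Y(a)\lesssim a^{1-m/n}$ for $a\geq 1$, which is exactly~\eqref{thm:optimaldomain:cond}.

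To show that $\tau_m$ is a rearrangement-invariant norm, I work with the convenient dualized form
\[
\tau_m(f)=\sup_{\substack{g\geq 0\\ \|g\|_{Y'(0,\infty)}\leq 1}}\int_0^\infty f^*(s)\phi_g^*(s)\,\d s,\qquad\phi_g(s):=s^{m/n-1}\!\int_0^s g(t)\,\d t,
\]
obtained by dualizing $\|T_h\|_{Y(0,\infty)}$, interchanging the two suprema, using Fubini to rewrite $\int T_h g=\int h\phi_g$, and invoking the Hardy--Littlewood extremum $\sup_{h\sim f,\,h\geq 0}\int h\phi_g=\int f^*\phi_g^*$. From this formula, (P2), (P3), (P6), and positive homogeneity are immediate, while the triangle inequality follows from the Hardy--Littlewood--P\'olya subadditivity $\int(f_1+f_2)^*\psi\leq\int f_1^*\psi+\int f_2^*\psi$ applied to the nonincreasing $\psi=\phi_g^*$. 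Axiom (P4) is where~\eqref{thm:optimaldomain:cond} is used: from $\phi_g(s)\leq s^{m/n}g^{**}(s)$ and $g^{**}(s)\varphi_{Y'}(s)\leq\|g\|_{Y'(0,\infty)}$ (an easy consequence of H\"older's inequality) one gets $\phi_g(s)\leq\|g\|_{Y'(0,\infty)}s^{m/n-1}\varphi_Y(s)$, and then~\eqref{thm:optimaldomain:cond} for $s\geq 1$ together with $\varphi_Y(s)\leq\varphi_Y(1)$ for $s\leq 1$ yields $\phi_g(s)\lesssim\|g\|_{Y'(0,\infty)}[s^{m/n-1}\chi_{(0,1)}(s)+\chi_{(1,\infty)}(s)]$. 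Since the right-hand side is already nonincreasing, the same bound holds for $\phi_g^*$, and $\int_0^{|E|}\phi_g^*\,\d s\lesssim\|g\|_{Y'(0,\infty)}(1+|E|)$, whence $\tau_m(\chi_E)<\infty$. Property (P5) is then a routine consequence.

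That $Y_m$ renders~\eqref{intr:homoemb} true is the simplest step: by Theorem~\ref{thm:reductionprinciple} it suffices to verify $\|T_f\|_{Y(0,\infty)}\leq C\|f\|_{Y_m(0,\infty)}$ for $f\in\Mpl(0,\infty)$, and the admissible choice $h=f$ in the supremum defining $\tau_m(f)$ yields this with constant $1$. For the optimality, if $X$ is any r.i.~space satisfying~\eqref{intr:homoemb}, Theorem~\ref{thm:reductionprinciple} gives $\|T_g\|_{Y(0,\infty)}\leq C_2\|g\|_{X(0,\infty)}$ for all $g\in\Mpl(0,\infty)$. For $f\in\Mpl(\rn)$ and any $h\sim f$ with $h\geq 0$, rearrangement invariance of $X$ gives $\|h\|_{X(0,\infty)}=\|f\|_X$, hence $\|T_h\|_{Y(0,\infty)}\leq C_2\|f\|_X$; taking the supremum over admissible $h$ yields $\tau_m(f)\leq C_2\|f\|_X$, i.e., $X\hookrightarrow Y_m$.

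The main obstacle will be the dualized representation of $\tau_m$ in the second step---in particular the application of the Hardy--Littlewood extremum in the presence of two suprema---together with the verification of (P4), where condition~\eqref{thm:optimaldomain:cond} enters decisively through the fundamental-function estimates $g^{**}\varphi_{Y'}\leq\|g\|_{Y'(0,\infty)}$ and $\varphi_Y(s)\lesssim s^{1-m/n}$ for $s\geq 1$.
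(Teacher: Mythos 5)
Your proposal is correct and uses the same overall architecture as the paper for the embedding and optimality parts: reduce \eqref{intr:homoemb} to the one-dimensional inequality \eqref{thm:eq:equivalentformsofonedimensionalinequality:reduction} via Theorem~\ref{thm:reductionprinciple}, obtain \eqref{thm:optimaldomain:vnoreni} by taking the trivial choice $h=f$ in the supremum defining $\tau_m$, and obtain optimality of $Y_m$ by taking the supremum over all $h\sim f$ after applying the reduction principle to an arbitrary competing domain space. Where you genuinely differ is in the two parts the paper does not prove but outsources to [EMMP, Theorem~4.1]: the proof that $\tau_m$ is a rearrangement-invariant norm (which the paper explicitly describes as ``rather deep, especially the triangle inequality''), and the proof that failure of \eqref{thm:optimaldomain:cond} precludes any admissible $X$. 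For the former, your dualized representation
\[
\tau_m(f)=\sup_{\substack{g\ge0\\ \|g\|_{Y'(0,\infty)}\le1}}\int_0^\infty f^*(s)\,\phi_g^*(s)\,\d s,\qquad \phi_g(s)=s^{\frac{m}{n}-1}\int_0^s g,
\]
obtained by dualizing, interchanging suprema, applying Tonelli and the Hardy--Littlewood extremum, is a clean observation: once $\tau_m$ is written as a supremum of ``down'' pairings against the fixed family of nonincreasing weights $\phi_g^*$, the triangle inequality reduces to Hardy's lemma together with the subadditivity \eqref{E:subadditivity-of-doublestar-a}, and the nontriviality axiom (P4) follows from the pointwise bound $\phi_g(s)\lesssim\|g\|_{Y'(0,\infty)}\bigl(s^{\frac{m}{n}-1}\chi_{(0,1)}(s)+\chi_{(1,\infty)}(s)\bigr)$, for which \eqref{thm:optimaldomain:cond} is exactly what is needed for $s\ge1$. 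For the necessity of \eqref{thm:optimaldomain:cond}, your explicit test with $f=\chi_{(0,a)}$, the elementary estimate $\|T_f\|_{Y(0,\infty)}\gtrsim a^{m/n}\varphi_Y(a)$ on $(0,a/2)$, and the quasi-concavity bound $\varphi_X(a)\le\varphi_X(1)a$ give a short and fully self-contained derivation. The net gain of your approach is a proof that does not lean on the reference; the paper's route is shorter on the page but relies on prior work.
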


The general description of the optimal domain norm given by \eqref{thm:optimaldomain:tau} is quite complicated. Fortunately, it can be simplified significantly in many customary situations. This is the content of the following statement, which follows from Theorems~4.2 and~4.7 in~\citep{EMMP}. We shall need the operator $T_\alpha$ defined for some fixed $\alpha\in(0,1)$ by
\begin{equation}\label{rem:simplificationofdomainnormsupopdef}
T_\alpha f(t)= t^{-\alpha}\sup_{t\leq s<\infty}s^\alpha f^*(s)\quad\text{for $t\in(0,\infty)$ and $f\in\M(0,\infty)$.}
\end{equation}

\begin{theorem}\label{thm:simplificationofdomainnorm}
Assume that $m < n$ and let $Y$ be a rearrangement-invariant space over $\rn$ such that the operator $T_{\frac{m}{n}}$ is bounded on $Y'(0,\infty)$. Then \eqref{thm:optimaldomain:cond} is satisfied and the rearrangement-invariant norm $\tau_m$ defined by \eqref{thm:optimaldomain:tau} is equivalent to the functional
\begin{equation}\label{thm:simplificationofdomainnormeq}
f\mapsto\left\|\int_t^{\infty}f^*(s)s^{\frac{m}{n}-1}\,\d s\right\|_{Y(0,\infty)},\ f\in\M_+(0,\infty).
\end{equation}
Conversely, if $T_{\frac{m}{n}}$ is not bounded on $Y'(0,\infty)$, then $\tau_m$ is not equivalent to the functional~\eqref{thm:simplificationofdomainnormeq}.
\end{theorem}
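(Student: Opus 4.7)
The plan is to reformulate the claimed equivalence and its failure as one-dimensional conditions on $Y'(0,\infty)$ and then invoke~\citep[Theorems~4.2 and~4.7]{EMMP}, which treat precisely these conditions in connection with the fractional maximal operator (cf.\ Remark~\ref{rem:relationshipbetweenembeddingandfractional}). Denote by $\tilde\tau_m(f)$ the functional in~\eqref{thm:simplificationofdomainnormeq}. The inequality $\tilde\tau_m(f)\le\tau_m(f)$ is immediate by testing $h=f^*$ in the supremum defining $\tau_m$, so the content of the theorem is the reverse inequality under the $T_{\frac{m}{n}}$-boundedness hypothesis, together with its failure otherwise.

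For the reverse inequality, I would first dualize. For any $h\in\Mpl(0,\infty)$, swapping integrals in the $Y$--$Y'$ pairing and using $\int_0^s g(t)\,\d t = sg^{**}(s)$ for nonincreasing $g$ yields
\begin{equation*}
\left\|\int_t^\infty h(s) s^{\frac{m}{n}-1}\,\d s\right\|_{Y(0,\infty)} = \sup_{\substack{g=g^*\\ \|g\|_{Y'} \le 1}}\int_0^\infty h(s)\, s^{\frac{m}{n}} g^{**}(s)\,\d s,
\end{equation*}
where rearrangement-invariance of $Y'$ justifies the restriction to nonincreasing $g$. Taking the supremum over $h\sim f$, $h\ge 0$ and invoking the Hardy--Littlewood inequality (saturated for each fixed $g$) then yields
\begin{equation*}
\tau_m(f) = \sup_{\substack{g=g^*\\ \|g\|_{Y'}\le 1}}\int_0^\infty f^*(s)\bigl(s^{\frac{m}{n}}g^{**}(s)\bigr)^*(s)\,\d s,
\end{equation*}
while $\tilde\tau_m(f)$ admits the analogous representation with $s^{\frac{m}{n}}g^{**}(s)$ itself in place of its nonincreasing rearrangement.

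The equivalence $\tau_m\approx\tilde\tau_m$ thus reduces to whether, uniformly in nonincreasing $\phi=g^{**}$ with $\|g\|_{Y'}\le 1$, one may replace the nonincreasing rearrangement of $s\mapsto s^{\frac{m}{n}}\phi(s)$ by $s^{\frac{m}{n}}\phi(s)$ itself without changing the supremum by more than a multiplicative constant. This is precisely the situation analysed in~\citep[Theorems~4.2 and~4.7]{EMMP}, where it is shown that such a replacement is admissible if and only if $T_{\frac{m}{n}}$ is bounded on $Y'(0,\infty)$; the same $T_{\frac{m}{n}}$-boundedness readily implies~\eqref{thm:optimaldomain:cond} (by testing $T_{\frac{m}{n}}$ against indicator functions of intervals, from which the required lower bound on $\varphi_{Y'}$ follows), ensuring that $\tau_m$ is well-defined. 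The converse failure is handled in~\citep{EMMP} by using witnesses of unboundedness of $T_{\frac{m}{n}}$ to construct test functions separating $\tau_m$ from $\tilde\tau_m$. The main obstacle will be the careful translation between the $\tau_m$-language used here and the fractional-maximal-operator language employed there, but once the dictionary is set up, both halves of the theorem follow directly.
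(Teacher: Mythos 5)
Your proposal is essentially the same approach the paper takes: the paper simply asserts that the theorem ``follows from Theorems~4.2 and~4.7 in [EMMP]'' without providing further detail, and your argument ultimately reduces to citing those same two results. The dualization and supremum-swapping steps you supply to bridge the formulations are correct — in particular, the identity $\tau_m(f)=\sup_{g=g^*,\,\|g\|_{Y'}\le1}\int_0^\infty f^*(s)\bigl(s^{\frac{m}{n}}g^{**}(s)\bigr)^*(s)\,\d s$ (via Fubini, restriction to nonincreasing $g$ since the inner function is nonincreasing, and saturation of Hardy--Littlewood over $h\sim f$ on a nonatomic space) is sound, as is the observation that testing $T_{\frac{m}{n}}$ on normalized indicators such as $N^{-\frac{m}{n}}\chi_{(0,N)}$ yields $\varphi_{Y'}(N)\gtrsim N^{\frac{m}{n}}$ and hence condition~\eqref{thm:optimaldomain:cond}. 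Since the paper itself hides all of this in the citation, your fill-in is a useful elaboration rather than a divergence; the only genuine residual obligation, which you explicitly flag, is to confirm that the dualized formulation you arrive at is literally the form treated in [EMMP, Thms.\ 4.2, 4.7].
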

We finish this section by observing that Theorem~\ref{thm:simplificationofdomainnorm} can be applied, for example, to $Y=L^{p}$ with $p\in(\frac{n}{n-m},\infty]$ or to $Y=L^{\frac{n}{n-m},1}$.

\section{Proofs of main results}\label{sec:proofs}
We start off by proving the equivalence of \eqref{thm:eq:equivalentformsofonedimensionalinequality:reduction} and \eqref{thm:eq:equivalentformsofonedimensionalinequality:reductiondual}.
\begin{proposition}\label{prop:equivalentformsofonedimensionalinequality}
Assume that $m < n$ and let $X(0,\infty)$ and $Y(0,\infty)$ be rearrangement-invariant spaces over $(0,\infty)$. Then the following two inequalities (in fact with the same positive constants $C$) are equivalent:
\begin{align*}
\left\|\int_t^\infty f(s)s^{\frac{m}{n} - 1}\, \d s\right\|_{Y(0,\infty)}&\leq C \|f\|_{X(0,\infty)}\quad\text{for each $f\in\Mpl(0,\infty)$};\\
\|t^{\frac{m}{n}}g^{**}(t)\|_{X'(0,\infty)}&\leq C \|g\|_{Y'(0,\infty)}\quad\text{for each $g\in\Mpl(0,\infty)$}.
\end{align*}
\end{proposition}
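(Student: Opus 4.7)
The plan is a symmetric duality argument bridged by Fubini's theorem, with the Hardy--Littlewood inequality \eqref{E:HL} correcting for the gap between a function and its nonincreasing rearrangement. The central observation is that for any nonnegative $f$, the function $F(t) := \int_t^\infty f(s) s^{m/n-1}\,\d s$ is nonincreasing, and swapping the order of integration together with the bound $\int_0^s g(t)\,\d t \leq \int_0^s g^*(t)\,\d t = s\,g^{**}(s)$ yields, for all $f,g \in \Mpl(0,\infty)$,
\begin{equation*}
\int_0^\infty F(t) g(t)\,\d t
= \int_0^\infty f(s) s^{m/n-1} \int_0^s g(t)\,\d t\,\d s
\leq \int_0^\infty f(s) s^{m/n} g^{**}(s)\,\d s,
\end{equation*}
with equality whenever $g$ is nonincreasing. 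This single identity/inequality carries all the analytic content; the rest is a double application of the associate-norm duality formula recalled in Section~\ref{sec:prel}.

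To pass from the first inequality to the second, I would fix $g \in \Mpl(0,\infty)$ and test the nonnegative function $H(s) := s^{m/n} g^{**}(s)$ against an arbitrary $f \in \Mpl(0,\infty)$ with $\|f\|_{X(0,\infty)} \leq 1$. Writing the Fubini identity above with $g$ replaced by $g^*$ (where equality holds at the final step), then H\"older in the pair $(Y, Y')$ on $(0,\infty)$, and finally the assumed first inequality, gives
\begin{equation*}
\int_0^\infty H(s) f(s)\,\d s
= \int_0^\infty g^*(t) F(t)\,\d t
\leq \|g^*\|_{Y'(0,\infty)} \|F\|_{Y(0,\infty)}
\leq C \|g\|_{Y'(0,\infty)}.
\end{equation*}
Taking the supremum over such $f$ and invoking the characterization $\|H\|_{X'} = \sup\{\int_0^\infty H f\,\d s : f \geq 0,\ \|f\|_X \leq 1\}$ delivers the second inequality with the same constant.

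For the reverse direction, I would symmetrically fix $f \in \Mpl(0,\infty)$ and test $F$ against an arbitrary $g \in \Mpl(0,\infty)$ with $\|g\|_{Y'(0,\infty)} \leq 1$. The displayed bound from the first paragraph followed by H\"older in the pair $(X, X')$ and the assumed second inequality produce $\int_0^\infty F(t) g(t)\,\d t \leq C \|f\|_{X(0,\infty)}$, and the duality formula for $\|F\|_{Y(0,\infty)}$ closes the argument. I anticipate no real obstacle: the whole proof is a careful double application of duality and Fubini, and the only point to watch is the direction in which Hardy--Littlewood is invoked, so that $g^{**}$ appears as the upper envelope of the running integral of $g$ and both implications carry the identical constant $C$.
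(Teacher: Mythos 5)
Your proposal is correct and follows essentially the same route as the paper: the duality (associate-norm) representation of the rearrangement-invariant norms, Fubini's theorem to swap the order of integration, and the Hardy--Littlewood inequality \eqref{E:HL} together with the rearrangement invariance of $\|\cdot\|_{Y'}$ to pass from $\int_0^s g(t)\,\d t$ to $s\,g^{**}(s)$. The paper merely compresses both directions into a single chain of equalities between the two best constants, whereas you spell out the two implications separately; the mathematical content and the constant are identical.
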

\begin{proof}
The equivalence of these two inequalities follows from the definition of the associate norm because we have that
\begin{align*}
\sup_{\substack{\|f\|_{X(0,\infty)}\leq1\\f\geq0}}\left\|\int_t^\infty f(s)s^{\frac{m}{n} - 1}\, \d s\right\|_{Y(0,\infty)} &= \sup_{\substack{\|f\|_{X(0,\infty)}\leq1\\f\geq0}}\sup_{\substack{\|g\|_{Y'(0,\infty)}\leq1\\g\geq0}}\int_0^\infty g(t)\int_t^\infty f(s)s^{\frac{m}{n} - 1}\, \d s\, \d t\\
&= \sup_{\substack{\|f\|_{X(0,\infty)}\leq1\\f\geq0}}\sup_{\substack{\|g\|_{Y'(0,\infty)}\leq1\\g\geq0}}\int_0^\infty f(s) s^{\frac{m}{n} - 1}\int_0^s g(t)\,\d t\,\d s\\
&= \sup_{\substack{\|f\|_{X(0,\infty)}\leq1\\f\geq0}}\sup_{\substack{\|g\|_{Y'(0,\infty)}\leq1\\g\geq0}}\int_0^\infty f(s) s^\frac{m}{n}g^{**}(s)\,\d s\\
&= \sup_{\substack{\|g\|_{Y'(0,\infty)}\leq1\\g\geq0}}\|s^\frac{m}{n}g^{**}(s)\|_{X'(0,\infty)},
\end{align*}
where the last but one equality is true due to the Hardy-Littlewood inequality \eqref{E:HL} and the fact that $g$ and $g^*$ are equimeasurable.
\end{proof}

The following proposition provides a necessary condition on a pair $X$ and $Y$ of rearrangement-invariant spaces for the validity of \eqref{thm:eq:equivalentformsofonedimensionalinequality:reduction} or, equivalently, of~\eqref{thm:eq:equivalentformsofonedimensionalinequality:reductiondual}. This information will enable us to easily single out pairs of spaces for which \eqref{intr:homoemb} cannot hold after we have proved Theorem~\ref{thm:reductionprinciple}. Similar necessary conditions (sometimes called ``of Muckenhoupt type'' in the literature) have been treated in various contexts before and proved very useful, see e.g.~\cite[Theorem~1]{MR1320016} or~\cite[Lemma~1]{MR1267713}.

\begin{proposition}\label{prop:necessaryconforembedd}
Assume that $m < n$ and assume that $X(0,\infty)$ and $Y(0,\infty)$ are rearrangement-invariant spaces over $(0,\infty)$ such that \eqref{thm:eq:equivalentformsofonedimensionalinequality:reduction}, equivalently \eqref{thm:eq:equivalentformsofonedimensionalinequality:reductiondual}, is valid for them. Then
\begin{equation*}
\sup\limits_{0<a<\infty}\varphi_{Y(0,\infty)}(a)\|t^{\frac{m}{n} - 1}\chi_{(a,\infty)}(t)\|_{X'(0,\infty)} < \infty.
\end{equation*}
In particular,
\begin{equation*}
\|t^{\frac{m}{n} - 1}\chi_{(a,\infty)}(t)\|_{X'(0,\infty)} < \infty\quad\text{for each $a>0$}.
\end{equation*}
\end{proposition}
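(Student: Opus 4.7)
The plan is to extract the claim by testing the dual inequality \eqref{thm:eq:equivalentformsofonedimensionalinequality:reductiondual} against characteristic functions of intervals of the form $(0,a)$, and then invoking the standard identity $\varphi_Y\varphi_{Y'}=\mathrm{id}$. This is a Muckenhoupt-type necessary condition and, once the right test function is chosen, the computation is elementary.

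Concretely, I would fix $a\in(0,\infty)$ and take $g=\chi_{(0,a)}$. Then $g^*=g$, and a direct computation gives
\[
g^{**}(t)=\frac1t\int_0^t\chi_{(0,a)}(s)\,\d s=\min\!\left(1,\tfrac{a}{t}\right),\quad t\in(0,\infty),
\]
while $\|g\|_{Y'(0,\infty)}=\varphi_{Y'(0,\infty)}(a)$ by the very definition of the fundamental function. Plugging this $g$ into \eqref{thm:eq:equivalentformsofonedimensionalinequality:reductiondual} yields
\[
\bigl\|t^{\frac{m}{n}}\min(1,a/t)\bigr\|_{X'(0,\infty)}\leq C\,\varphi_{Y'(0,\infty)}(a).
\]

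Next I would bound the left-hand side from below by restricting the norm to the interval $(a,\infty)$, on which $\min(1,a/t)=a/t$ and hence $t^{m/n}\min(1,a/t)=a\,t^{m/n-1}$. By the lattice axiom (P2),
\[
a\,\bigl\|t^{\frac{m}{n}-1}\chi_{(a,\infty)}(t)\bigr\|_{X'(0,\infty)}\leq \bigl\|t^{\frac{m}{n}}\min(1,a/t)\bigr\|_{X'(0,\infty)}\leq C\,\varphi_{Y'(0,\infty)}(a).
\]
Finally, using the fundamental identity $\varphi_{Y(0,\infty)}(a)\varphi_{Y'(0,\infty)}(a)=a$ for $a\in(0,\infty)$, the right-hand side equals $Ca/\varphi_{Y(0,\infty)}(a)$, and cancelling the factor $a$ gives
\[
\varphi_{Y(0,\infty)}(a)\,\bigl\|t^{\frac{m}{n}-1}\chi_{(a,\infty)}(t)\bigr\|_{X'(0,\infty)}\leq C.
\]
Taking the supremum over $a>0$ proves the main claim; the ``in particular'' statement follows because $\varphi_{Y(0,\infty)}(a)>0$ whenever $a>0$ (a direct consequence of property (P1) applied to a set of positive measure $a$).

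I do not expect any genuine obstacle: the only points requiring slight care are verifying that $\chi_{(0,a)}$ is a legitimate test function in the $Y'(0,\infty)$ norm (it is, by the nontriviality axiom (P4)) and ensuring that the equivalence between \eqref{thm:eq:equivalentformsofonedimensionalinequality:reduction} and \eqref{thm:eq:equivalentformsofonedimensionalinequality:reductiondual}, already established in Proposition~\ref{prop:equivalentformsofonedimensionalinequality}, lets us work on the dual side where the simplification $g^{**}(t)=\min(1,a/t)$ is available. Everything else is bookkeeping with the identity $\varphi_Y\varphi_{Y'}=\mathrm{id}$.
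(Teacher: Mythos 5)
Your proof is correct. The paper's own argument runs on the primal side: it fixes $a$, expands $\|t^{\frac{m}{n}-1}\chi_{(a,\infty)}(t)\|_{X'(0,\infty)}$ via the duality description of the associate norm as $\sup_{\|f\|_{X(0,\infty)}\leq1}\int_a^\infty |f(s)|s^{\frac{m}{n}-1}\,\d s$, observes that $\chi_{(0,a)}(t)\int_a^\infty |f(s)|s^{\frac{m}{n}-1}\,\d s\leq\chi_{(0,a)}(t)\int_t^\infty |f(s)|s^{\frac{m}{n}-1}\,\d s$ pointwise, and applies \eqref{thm:eq:equivalentformsofonedimensionalinequality:reduction} after taking the $Y(0,\infty)$-norm, with $\|\chi_{(0,a)}\|_{Y(0,\infty)}=\varphi_{Y(0,\infty)}(a)$ appearing directly. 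You instead work on the dual side \eqref{thm:eq:equivalentformsofonedimensionalinequality:reductiondual}: you test with $g=\chi_{(0,a)}$, compute $g^{**}(t)=\min(1,a/t)$, restrict to $(a,\infty)$ via the lattice axiom, and finish with the identity $\varphi_{Y(0,\infty)}(a)\varphi_{Y'(0,\infty)}(a)=a$. The two proofs are dual to each other and of comparable length; the paper's version avoids the fundamental-function identity and the explicit computation of $g^{**}$ at the cost of expanding the $X'$-norm as a supremum, while yours isolates the structure ($\min(1,a/t)$ truncated to $(a,\infty)$ gives a dilated power weight) that makes the Muckenhoupt-type condition appear, which is arguably slightly more transparent. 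Both are fine; there is no gap in your argument.
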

\begin{proof}
For each $a>0$ we have that
\begin{align*}
\|\chi_{(0,a)}\|_{Y(0,\infty)}\|t^{\frac{m}{n} - 1}\chi_{(a,\infty)}(t)\|_{X'(0,\infty)} &= \|\chi_{(0,a)}\|_{Y(0,\infty)}\sup\limits_{\|f\|_{X(0,\infty)}\leq1}\int_a^\infty |f(s)|s^{\frac{m}{n} - 1}\,\d s\\
&\leq\sup\limits_{\|f\|_{X(0,\infty)}\leq1}\left\|\chi_{(0,a)}(t)\int_t^\infty |f(s)|s^{\frac{m}{n} - 1}\,\d s\right\|_{Y(0,\infty)}\\
&\leq C_2,
\end{align*}
where $C_2$ is the constant from~\eqref{thm:eq:equivalentformsofonedimensionalinequality:reduction} or~\eqref{thm:eq:equivalentformsofonedimensionalinequality:reductiondual}.
\end{proof}

The following proposition is a key step in establishing the iteration principle of Theorem~\ref{thm:iterationprinciple}, which will also be indispensable in the proof of Theorem~\ref{thm:optimalrange}.
\begin{proposition}\label{prop:iterationprinciple}
Let $X(0,\infty)$ be a rearrangement-invariant space over $(0,\infty)$. Assume that $\alpha, \beta\in(0,\infty)$ are such that $\alpha + \beta < n$. Then there exist positive constants $C_1$ and $C_2$, depending on $\alpha$, $\beta$, and $n$ only, such that
\begin{align*}
C_1\|t^\frac{\alpha}{n}[\tau^\frac{\beta}{n}f^{**}(\tau)]^{**}(t)\|_{X(0,\infty)}&\leq\|t^\frac{\alpha+\beta}{n}f^{**}(t)\|_{X(0,\infty)}\\
&\leq C_2\|t^\frac{\alpha}{n}[\tau^\frac{\beta}{n}f^{**}(\tau)]^{**}(t)\|_{X(0,\infty)}\quad\text{for each}\ f\in\M(0,\infty).
\end{align*}
\end{proposition}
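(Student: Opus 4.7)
First I would assume without loss of generality that $f\in\Mpl(0,\infty)$ is nonincreasing, since all of the quantities in the statement depend only on $f^*$. Set $\gamma := \beta/n \in (0,1)$ (which is legal because $\beta<n$), $F(t) := \int_0^t f(s)\,\d s = t f^{**}(t)$, $g(s) := s^{\gamma}f^{**}(s)$ and $U(t) := t^{(\alpha+\beta)/n}f^{**}(t)$. I would treat the two inequalities of the proposition separately: the right-hand one (with $C_2$) turns out to hold pointwise, while the left-hand one (with $C_1$) is only valid at the level of the $X(0,\infty)$-norm.

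For the right-hand inequality my starting point would be $g^{**}(t)\ge\tfrac{1}{t}\int_0^t g(s)\,\d s$. Integration by parts in $\int_0^t s^{\gamma-1}F(s)\,\d s$ gives
\[
\int_0^t g(s)\,\d s \;=\; \tfrac{1}{\gamma}t^\gamma F(t) \;-\; \tfrac{1}{\gamma}\int_0^t s^\gamma f(s)\,\d s,
\]
and the elementary estimate $f(s)\le f^{**}(s)$ yields $\int_0^t s^\gamma f(s)\,\d s \le \int_0^t g(s)\,\d s$; absorbing the latter rearranges to $\int_0^t g \ge \tfrac{n}{n+\beta}\,t^{1+\gamma}f^{**}(t)$. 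Dividing by $t$ and using $g^{**}(t)\ge\tfrac{1}{t}\int_0^t g$ gives $g^{**}(t)\ge\tfrac{n}{n+\beta}\,t^\gamma f^{**}(t)$ pointwise; multiplying by $t^{\alpha/n}$ and taking $\|\cdot\|_{X(0,\infty)}$ delivers the inequality with $C_2=(n+\beta)/n$.

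For the left-hand inequality the key pointwise decomposition is
\[
g^{**}(t) \;\le\; \tfrac{n}{\beta}\,t^\gamma f^{**}(t) + \Phi(t), \qquad \Phi(t):=\int_t^\infty f(r)\,r^{\gamma-1}\,\d r,
\]
which I would derive from $t\,g^{**}(t)=\sup_{|E|\le t}\int_E g$ by writing, via Fubini, $\int_E g = \int_0^\infty f(r)\int_{E\cap(r,\infty)}s^{\gamma-1}\,\d s\,\d r$, and bounding the inner integral by $\tfrac{n}{\beta}t^\gamma$ for $r\le t$ (via the concavity inequality $(r+t)^\gamma-r^\gamma\le t^\gamma$) and by $r^{\gamma-1}t$ for $r>t$ (since $s^{\gamma-1}\le r^{\gamma-1}$ on $(r,\infty)$). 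After splitting the $r$-integral at $r=t$ one recognises the pieces $\tfrac{n}{\beta}t^{1+\gamma}f^{**}(t)$ and $t\Phi(t)$, and division by $t$ gives the decomposition. Taking $\|\cdot\|_{X(0,\infty)}$ reduces the inequality to proving $\|t^{\alpha/n}\Phi\|_{X(0,\infty)}\le C\|U\|_{X(0,\infty)}$.

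\textbf{The main obstacle} is this tail estimate: the naive pointwise bound $\Phi(t)\lesssim t^\gamma f^{**}(t)$ is \emph{false} in general (e.g.\ $f=\chi_{(0,1)}$ keeps $\Phi(t)$ bounded as $t\to 0^+$ while $t^\gamma f^{**}(t)\to 0$), so one must descend to the norm level. I would decompose dyadically: on each annulus
\[
\int_{2^k t}^{2^{k+1} t} f(r)r^{\gamma-1}\,\d r \;\le\; (2^k t)^{\gamma-1}F(2^{k+1}t) \;=\; 2\cdot 2^{k\gamma}\,t^\gamma f^{**}(2^{k+1}t),
\]
and summing over $k\ge 0$ and multiplying by $t^{\alpha/n}$ leads, using the identity $t^{(\alpha+\beta)/n}\cdot 2^{k\gamma}=2^{-k\alpha/n-(\alpha+\beta)/n}(2^{k+1}t)^{(\alpha+\beta)/n}$, to
\[
t^{\alpha/n}\Phi(t) \;\le\; 2^{1-(\alpha+\beta)/n}\sum_{k=0}^{\infty} 2^{-k\alpha/n}\,U(2^{k+1}t).
\]
Taking $\|\cdot\|_{X(0,\infty)}$ term by term and invoking the dilation bound $\|D_a U\|_{X(0,\infty)}\le \|U\|_{X(0,\infty)}$ valid for $a\ge 1$ in every rearrangement-invariant space (recalled in Section~\ref{sec:prel}) bounds each summand by $\|U\|_{X(0,\infty)}$, and the geometric series $\sum_{k\ge 0}2^{-k\alpha/n}$ converges precisely because $\alpha>0$. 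Combined with the pointwise decomposition this yields the left-hand inequality with a constant $C_1$ depending only on $\alpha$, $\beta$, and $n$.
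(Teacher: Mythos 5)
Your proof is correct, and it takes a genuinely different route from the paper for the harder half of the two-sided estimate. The paper proves the right-hand inequality (with $C_2$) directly by an averaging trick — writing $t^{(\alpha+\beta)/n-1}\int_0^t f^*\approx t^{\alpha/n-1}\int_t^{2t}\tau^{\beta/n-1}\,\d\tau\int_0^t f^*$, pushing the inner integral up to $\int_0^\tau f^*$, and then applying the Hardy--Littlewood inequality over the set $(t,2t)$ of measure $t$ — whereas you prove the same direction by an integration-by-parts identity and absorption; both are pointwise arguments of comparable length, and yours has the advantage of producing the clean explicit constant $C_2=(n+\beta)/n$. The real divergence is in the left-hand inequality (with $C_1$): the paper simply cites an external result (\citep[Theorem~3.4]{CPnew}, proved there on $(0,1)$, combined with the argument of \citep[Lemma~4.10]{EMMP}) and omits the details. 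You instead give a complete, self-contained proof: a Fubini/rearrangement argument yields the pointwise split $g^{**}(t)\le \tfrac{n}{\beta}t^{\beta/n}f^{**}(t)+\Phi(t)$ with $\Phi$ the tail integral, and the tail term is then controlled in norm (rather than pointwise, which you correctly observe is impossible) by a dyadic decomposition together with the boundedness of dilations $D_a$ with constant $1$ for $a\ge 1$. Your reduction to nonincreasing $f$ is legitimate because every quantity appearing depends on $f$ only through $f^{**}$, and your use of Fubini and the identity $tg^{**}(t)=\sup_{|E|\le t}\int_E g$ is sound (the subadditivity $(r+t)^\gamma-r^\gamma\le t^\gamma$ is exactly what makes the ``good'' piece come out with the constant $n/\beta$). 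One tiny point worth flagging in a final write-up: the integration by parts requires $s^\gamma F(s)\to0$ as $s\to0^+$, which holds whenever $f$ is locally integrable near $0$; when it is not, $f^{**}\equiv\infty$ and both sides of the proposition are trivially infinite, so the claim still holds. Overall your argument buys self-containment, elementary methods, and explicit constants, at the cost of being somewhat longer than the paper's citation-based treatment.
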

\begin{proof}
The first inequality was proved in \citep[Theorem~3.4]{CPnew} for $(0,1)$ instead of $(0,\infty)$. However, the proof works just as well for $(0,\infty)$ when combined with the argument from the proof of \citep[Lemma~4.10]{EMMP}. For the sake of brevity, the details are omitted.

Regarding the second inequality, we estimate
\begin{align*}
\|t^\frac{\alpha+\beta}{n}f^{**}(t)\|_{X(0,\infty)} &= \|t^{\frac{\alpha+\beta}{n}-1}\int_0^t f^{*}(s)\,\d s\|_{X(0,\infty)}\approx\|t^{\frac{\alpha}{n}-1}\int_t^{2t}\tau^{\frac{\beta}{n} - 1}\,\d\tau\int_0^t f^{*}(s)\,\d s\|_{X(0,\infty)}\\
&\leq\|t^{\frac{\alpha}{n}-1}\int_t^{2t}\tau^{\frac{\beta}{n} - 1}\int_0^\tau f^{*}(s)\,\d s\,\d\tau\|_{X(0,\infty)} = \|t^{\frac{\alpha}{n}-1}\int_t^{2t}\tau^\frac{\beta}{n}f^{**}(\tau)\,\d\tau\|_{X(0,\infty)}\\
&\leq\|t^\frac{\alpha}{n}[\tau^\frac{\beta}{n}f^{**}(\tau)]^{**}(t)\|_{X(0,\infty)},
\end{align*}
where Hardy-Littlewood inequality \eqref{E:HL} is exploited in the last step.
\end{proof}

Now we are in the position to prove our main results.
\begin{proof}[Proof of Theorem~\ref{thm:iterationprinciple}]
We have that
\begin{align*}
\sigma_{k}\left(t^{\frac{l}{n} - 1}\chi_{(1,\infty)}(t)\right) &= \|t^{\frac{k}{n} - 1}\int_0^t (s+1)^{\frac{l}{n} - 1}\,\d s\|_{X'(0,\infty)}\approx \|t^{\frac{k}{n} - 1}[(t+1)^\frac{l}{n} - 1]\|_{X'(0,\infty)}\\
&\leq \|t^{\frac{k}{n} - 1}[(t+1)^\frac{l}{n} - 1]\chi_{(0,1)}(t)\|_{X'(0,\infty)} + \|t^{\frac{k+l}{n} - 1}\chi_{(1,\infty)}(t)\|_{X'(0,\infty)}\\
&\leq\|t^{\frac{k}{n} - 1}[(t+1)^\frac{l}{n} - 1]\|_{L^\infty(0,1)}\|\chi_{(0,1)}\|_{X'(0,\infty)} + \|t^{\frac{k+l}{n} - 1}\chi_{(1,\infty)}(t)\|_{X'(0,\infty)}\\
&< \infty.
\end{align*}
Hence $t^{\frac{l}{n} - 1}\chi_{(1,\infty)}(t)\in (X^k)'(0,\infty)$. It follows from Proposition~\ref{prop:iterationprinciple} that
\begin{equation*}
\|u\|_{(X^k)^l}\approx\|u\|_{X^{k+l}},
\end{equation*}
where the multiplicative constants depend on $m$ and on the dimension $n$ only.
\end{proof}

\begin{proof}[Proof of~Theorem~\ref{thm:optimalrange}]
It can be proved that $\sigma_m$ is a rearrangement-invariant norm if and only if the condition \eqref{thm:optimalrange:cond} is satisfied (cf.~\citep[Theorem~5.4]{CPS} and \citep[Theorem~4.4]{EMMP}). We note only that the triangle inequality follows from \eqref{E:subadditivity-of-doublestar-a}. Observe that $t^{\frac{j}{n} - 1}\chi_{(1,\infty)}(t)\leq t^{\frac{m}{n} - 1}\chi_{(1,\infty)}(t)$ on $(0,\infty)$ for $j\in\{1,\dots,m\}$. Hence $\sigma_j$ is a~rearrangement-invariant norm too provided that $t^{\frac{m}{n} - 1}\chi_{(1,\infty)}(t)\in X'(0,\infty)$.

We shall prove \eqref{thm:optimalrange:vnoreni} by induction on $m$. Firstly, assume that $m=1$. Then \eqref{thm:eq:equivalentformsofonedimensionalinequality:reduction} with $m=1$, $Y=X^1$ and $C_2=1$ is true by Proposition~\ref{prop:equivalentformsofonedimensionalinequality}. Let $u\in V_0^1 X(\rn)$. Note that $\lim\limits_{t\rightarrow\infty}u^*(t) = 0$. Since $u^*$ is locally absolutely continuous (\citep[Lemma~4.1]{CP-ARK}), we can estimate
\begin{align*}
\|u\|_{X^1} &= \|u^*\|_{X^1(0,\infty)} = \left\|\int_t^\infty -\frac{\d u^*}{\d s}(s)\,\d s\right\|_{X^1(0,\infty)} = \left\|\int_t^\infty \left(-\frac{\d u^*}{\d s}(s)s^{1-\frac{1}{n}}\right)s^{\frac{1}{n} - 1}\,\d s\right\|_{X^1(0,\infty)}\\
&\leq\left\|-\frac{\d u^*}{\d s}(s)s^{1-\frac{1}{n}}\right\|_{X(0,\infty)}\lesssim\|\nabla u\|_{X},
\end{align*}
where the last inequality is valid with a multiplicative constant depending on the dimension $n$ only due to a generalized P\'{o}lya-Szeg\H{o} principle \citep[Lemma~4.1]{CP-ARK}.

Next, assume that $1<m<n$ and that we have already proved \eqref{thm:optimalrange:vnoreni} for all smaller values of $m$. Let $u\in V_0^m X(\rn)$. For each $i\in\{1,\dots, n\}$ we have that $\frac{\partial u}{\partial x_i}\in V_0^{m-1}X(\rn)$ and, by the induction hypothesis,
\begin{equation*}
\|\frac{\partial u}{\partial x_i}\|_{X^{m-1}}\lesssim\|\nabla^{m-1}\frac{\partial u}{\partial x_i}\|_X\lesssim\|\nabla^m u\|_X.
\end{equation*}
Hence
\begin{equation}\label{thm:optimalrange:eq1}
\|\nabla u\|_{X^{m-1}}\lesssim\|\nabla^m u\|_X,
\end{equation}
that is, $u\in V_0^1 X^{m-1}(\rn)$. By~Theorem~\ref{thm:iterationprinciple} we have that $t^{\frac{1}{n} - 1}\chi_{(1,\infty)}(t)\in (X^{m-1})'(0,\infty)$. Hence we are entitled to use the first step with $m=1$ for $X^{m-1}$ instead of $X$, which yields
\begin{equation}\label{thm:optimalrange:eq2}
\|u\|_{(X^{m-1})^1}\lesssim \|\nabla u\|_{X^{m-1}}.
\end{equation}
Using Theorem~\ref{thm:iterationprinciple} again it follows that
\begin{equation}\label{thm:optimalrange:eq3}
\|u\|_{(X^{m-1})^1}\approx\|u\|_{X^m},
\end{equation}
where the multiplicative constants depend on $m$ and on the dimension $n$ only.
Combining \eqref{thm:optimalrange:eq1}, \eqref{thm:optimalrange:eq2} and \eqref{thm:optimalrange:eq3}, we obtain the desired inequality \eqref{thm:optimalrange:vnoreni}.

We shall prove the optimality of $X^m$ now. Assume that
\begin{equation}\label{thm:optimalrange:eq4}
\|u\|_Y\lesssim\|\nabla^m u\|_X\quad\text{for each}\ u\in V_0^m X(\rn)
\end{equation}
for a rearrangement-invariant space $Y$ over $\rn$. We shall show that \eqref{thm:optimalrange:eq4} implies \eqref{thm:eq:equivalentformsofonedimensionalinequality:reduction}. The proof proceeds along the lines of the proof of \citep[Theorem~3.3]{ACPS}.
Let $f\in\Mpl(0,\infty)$ having a bounded support be given. We may assume that $\|f\|_{X(0,\infty)}<\infty$ because otherwise there is nothing to prove. Define a function $g$ by
\begin{equation*}
g(t) = \int_{\omega_n t^n}^\infty \int_{s_1}^\infty\cdots\int_{s_{m-1}}^\infty f(s_m)s_m^{\frac{m}{n} - m}\,\d s_m\cdots\d s_1,\ t\in(0,\infty).
\end{equation*}
Routine, albeit slightly tedious, computations show (cf.~\citep[(4.34) and (4.35)]{ACPS}) that for $k\in\{1,\dots, m-1\}$
\begin{equation}\label{thm:optimalrange:eq5}
|g^{(k)}(t)|\lesssim\sum_{l=1}^k t^{ln-k}\int_{\omega_n t^n}^\infty f(s)s^{\frac{m}{n}-l-1}\,\d s\quad\text{for each}\ t\in(0,\infty)
\end{equation}
and that
\begin{equation}\label{thm:optimalrange:eq6}
|g^{(m)}(t)|\lesssim f(\omega_n t^n) + \sum_{l=1}^{m-1} t^{ln-m}\int_{\omega_n t^n}^\infty f(s)s^{\frac{m}{n}-l-1}\,\d s\quad\text{for a.e.}\ t\in(0,\infty).
\end{equation}
Now, consider a function $u$ defined by
\begin{equation*}
u(x) = g(|x|),\ x\in\rn.
\end{equation*}
Then u is $m$-times weakly differentiable on $\rn$ and one can observe that
\begin{equation}\label{thm:optimalrange:eq7}
|\frac{\partial^m u}{\partial^{\alpha_1} x_1\cdots\partial^{\alpha_n} x_n}(x)|\lesssim\sum_{k=1}^m|g^{(k)}(|x|)||x|^{k-m}\quad\text{for a.e.}\ x\in\rn,
\end{equation}
where $\alpha_1+\cdots+\alpha_n = m$.
Hence, combining \eqref{thm:optimalrange:eq5} and \eqref{thm:optimalrange:eq6} with \eqref{thm:optimalrange:eq7}, we obtain that
\begin{equation}\label{thm:optimalrange:eq8}
|\nabla^m u(x)|\lesssim f(\omega_n |x|^n) + \sum_{l=1}^{m-1} |x|^{ln-m}\int_{\omega_n |x|^n}^\infty f(s)s^{\frac{m}{n}-l-1}\,\d s\quad\text{for a.e.}\ x\in\rn.
\end{equation}
Since for $l\in\{1,\dots, m-1\}$ the linear operator
\begin{equation}\label{thm:optimalrange:eq11}
f\mapsto\left(t\mapsto t^{l-\frac{m}{n}}\int_{t}^\infty f(s)s^{\frac{m}{n}-l-1}\,\d s\right)
\end{equation}
is bounded on both $L^1(0,\infty)$ and $L^\infty(0,\infty)$ and the corresponding operator norms depend on $l$ and on the dimension $n$ only, it is bounded on every rearrangement-invariant space over $(0,\infty)$ by \citep[Chapter~3, Theorem~2.2]{BS}. In particular, it is bounded on $X(0,\infty)$. Moreover, the operator norm of the operator \eqref{thm:optimalrange:eq11} on $X(0,\infty)$ can be bounded from above by a constant, which depends on $m$ and on the dimension $n$ only. Hence, using \eqref{thm:optimalrange:eq8}, we can estimate that
\begin{equation}\label{thm:optimalrange:eq9}
\|\nabla^m u\|_{X}\lesssim\|f\|_{X(0,\infty)} + \sum_{l=1}^{m-1}\|t^{l-\frac{m}{n}}\int_{t}^\infty f(s)s^{\frac{m}{n}-l-1}\,\d s\|_{X(0,\infty)}\lesssim \|f\|_{X(0,\infty)},
\end{equation}
where the multiplicative constants depend on $m$ and on the dimension $n$ only. Hence, $u\in V^m X(\rn)$. Furthermore, since $f$ has a bounded support, it follows that $u\in V_0^m X(\rn)$. By Fubini's theorem
\begin{equation*}
u(x) = \frac1{(m-1)!}\int_{\omega_n |x|^n}^\infty f(s)s^{\frac{m}{n} - m}(s-\omega_n|x|^n)^{m-1}\,\d s\quad\text{for }x\in\rn,
\end{equation*}
whence
\begin{equation}\label{thm:optimalrange:eq10}
\begin{split}
\|u\|_Y&\gtrsim \|\int_{2t}^\infty f(s)s^{\frac{m}{n} - m}(s-t)^{m-1}\,\d s\|_{Y(0,\infty)}\\
&\gtrsim\|\int_{2t}^\infty f(s)s^{\frac{m}{n} - m}s^{m-1}\,\d s\|_{Y(0,\infty)} = \|\int_{2t}^\infty f(s)s^{\frac{m}{n} - 1}\,\d s\|_{Y(0,\infty)},
\end{split}
\end{equation}
where the second inequality follows from the simple fact that $-t\geq-\frac{s}{2}$ for $s\geq 2t$.

Now, we are ready to finally establish \eqref{thm:eq:equivalentformsofonedimensionalinequality:reduction}. Indeed, by virtue of the boundedness of the dilation operator on rearrangement-invariant spaces, \eqref{thm:optimalrange:eq4}, \eqref{thm:optimalrange:eq9} and \eqref{thm:optimalrange:eq10}, we obtain that
\begin{align*}
\|\int_{t}^\infty f(s)s^{\frac{m}{n} - 1}\,\d s\|_{Y(0,\infty)}&\lesssim\|\int_{2t}^\infty f(s)s^{\frac{m}{n} - 1}\,\d s\|_{Y(0,\infty)}\lesssim \|u\|_Y\\
&\lesssim\|\nabla^m u\|_{X}\lesssim\|f\|_{X(0,\infty)}.
\end{align*}
Since an arbitrary function $f\in\Mpl(0,\infty)$ can be approximated by a nondecreasing sequence of nonnegative functions with bounded supports, \eqref{thm:eq:equivalentformsofonedimensionalinequality:reduction} follows. Since \eqref{thm:eq:equivalentformsofonedimensionalinequality:reduction} is equivalent to \eqref{thm:eq:equivalentformsofonedimensionalinequality:reductiondual} by Proposition~\ref{prop:equivalentformsofonedimensionalinequality}, we have, in fact, proved that $Y'\hookrightarrow (X^m)'$, equivalently, $X^m\hookrightarrow Y$.

Finally, if there exists any rearrangement-invariant space $Y$ over $\rn$ which renders \eqref{thm:optimalrange:eq4} true, then \eqref{thm:eq:equivalentformsofonedimensionalinequality:reduction} is valid by the computations above. Hence \eqref{thm:optimalrange:cond} is true by Proposition~\ref{prop:necessaryconforembedd}.
\end{proof}

\begin{proof}[Proof of~Theorem~\ref{thm:reductionprinciple}]
On the one hand, if \eqref{thm:eq:reductionprinciple:embedding} is valid, then $X^m\hookrightarrow Y$ by Theorem~\ref{thm:optimalrange}. Hence \eqref{thm:eq:equivalentformsofonedimensionalinequality:reductiondual} is valid by the very definition of $\sigma_m$, given by \eqref{thm:optimalrange:sigma}. On the other hand, assume that \eqref{thm:eq:equivalentformsofonedimensionalinequality:reductiondual} is in force, that is,
\begin{equation*}
\sigma_m(g)\lesssim\|g\|_{Y'(0,\infty)},
\end{equation*}
where $\sigma_m$ is defined by \eqref{thm:optimalrange:sigma}. Then \eqref{thm:optimalrange:cond} is satisfied by Proposition~\ref{prop:necessaryconforembedd} and
\begin{equation*}
X^m\hookrightarrow Y.
\end{equation*}
Hence by Theorem~\ref{thm:optimalrange}
\begin{equation*}
\|u\|_Y\lesssim\|u\|_{X^m}\lesssim\|\nabla^m u\|_{X}\quad\text{for each}\ u\in V_0^m X(\rn).
\end{equation*}
Thus the equivalence of \eqref{thm:eq:reductionprinciple:embedding} and \eqref{thm:eq:equivalentformsofonedimensionalinequality:reductiondual} has been proved. The inequalities \eqref{thm:eq:equivalentformsofonedimensionalinequality:reduction} and \eqref{thm:eq:equivalentformsofonedimensionalinequality:reductiondual} are equivalent by Proposition~\ref{prop:equivalentformsofonedimensionalinequality}.
\end{proof}

\begin{proof}[Proof of~Theorem~\ref{thm:optimaldomain}]
The fact that $\tau_m$ is a rearrangement-invariant norm is rather deep, especially the triangle inequality, and we refer the reader to \citep[Theorem~4.1]{EMMP}. Let $f\in\Mpl(0,\infty)$. Then
\begin{equation*}
\left\|\int_t^{\infty}f(s)s^{\frac{m}{n}-1}\,\d s\right\|_{Y(0,\infty)}\leq\sup_{\substack{h\sim f\\h\ge0}}
		\left\|\int_t^{\infty}h(s)s^{\frac{m}{n}-1}\,\d s\right\|_{Y(0,\infty)} = \|f\|_{Y_m(0,\infty)},
\end{equation*}
which proves \eqref{thm:optimaldomain:vnoreni} by Theorem~\ref{thm:reductionprinciple}.

Now, let $Z$ be a rearrangement-invariant space such that
\begin{equation*}
\|u\|_{Y}\leq C \|\nabla^m u\|_{Z}\quad\text{for each $u\in V_0^m Z(\rn)$}
\end{equation*}
and let $f\in\M(\rn)$ and $h\in\Mpl(0,\infty)$ be equimeasurable. We have that
\begin{equation*}
\left\|\int_t^{\infty}h(s)s^{\frac{m}{n}-1}\,\d s\right\|_{Y(0,\infty)}\lesssim \|h\|_{Z(0,\infty)} = \|f\|_{Z}
\end{equation*}
due to Theorem~\ref{thm:reductionprinciple}, whence
\begin{equation*}
\|f\|_{Y_m}\lesssim\|f\|_{Z}.
\end{equation*}
Hence $Z\hookrightarrow Y_m$.

Finally, if~\eqref{thm:optimaldomain:cond} is not true, then repeating the computations from the proof of \citep[Theorem~4.1]{EMMP}, one can prove that there is no rearrangement-invariant space $X$ for which~\eqref{thm:optimaldomain:vnoreni} is rendered true.
\end{proof}

\section{Examples of optimal Sobolev embeddings}\label{sec:riexamples}
In this section examples of optimal rearrangement-invariant spaces for Lorentz--Zygmund spaces and Orlicz spaces are given.
\begin{theorem}\label{thm:optimalrangeexamples}
Let $m < n$ and let $X=L^{p, q; \A}(\rn)$ where $p,q\in[1,\infty]$ and $\A\in\R^2$. Assume that one of the conditions \eqref{thm:optimalrangeexamples-condonglztobebfs} holds. The space $X^m$ defined by
\begin{equation}\label{thm:optimalrangeexamples:conds}
X^m =\begin{cases}L^{\frac{np}{n-mp}, q;\A},\quad&p=q=1,\ \alpha_0\geq0,\ \alpha_\infty\leq0\ \text{or}\\
&p\in(1,\frac{n}{m}),\\
L^{\infty, q;\A - 1},\quad&p=\frac{n}{m},\ \alpha_0 < \frac1{q'},\ \alpha_\infty>\frac1{q'},\\
L^{\infty, 1;[-1,\alpha_\infty - 1], [-1,0], [-1,0]},\quad&p=\frac{n}{m},\ q = 1,\ \alpha_0 = 0,\ \alpha_\infty > 0,\\
Y_1,\quad&p=\frac{n}{m},\ q = 1,\ \alpha_0 < 0 ,\ \alpha_\infty = 0,\\
L^\infty,\quad&p=\frac{n}{m},\ q = 1,\ \alpha_0 \geq 0 ,\ \alpha_\infty = 0,\\
Y_2,\quad&p=\frac{n}{m},\ q\in[1,\infty),\ \alpha_0 > \frac1{q'} ,\ \alpha_\infty > \frac1{q'},\\
L^{\infty, q;[-\frac1{q},\alpha_\infty - 1],[-1,0]},\quad&p=\frac{n}{m},\ q\in(1,\infty],\ \alpha_0 = \frac1{q'},\ \alpha_\infty >\frac1{q'},\\
L^{\infty, \infty;[0,\alpha_\infty - 1]},\quad&p=\frac{n}{m},\ q = \infty,\ \alpha_0 > 1,\ \alpha_\infty > 1,\end{cases}
\end{equation}
where
\begin{align*}
\|f\|_{Y_1} &=\|t^{-1}\ell^{\alpha_0 - 1}(t)f^*(t)\|_{L^1(0,1)},\\
\|f\|_{Y_2} &= \|f\|_{L^\infty} + \|t^{-\frac1{q}}\ell^{\alpha_\infty - 1}(t)f^*(t)\|_{L^q(1,\infty)},
\end{align*}
is the optimal (the smallest) target space for $X$ in \eqref{intr:homoemb}.

Conversely, if $p=\frac{n}{m}$ and $q=1$ and $\alpha_\infty<0$, or $p=\frac{n}{m}$ and $q\in(1,\infty]$ and $\alpha_\infty\leq\frac1{q'}$, or $p\in(\frac{n}{m},\infty]$, then there does not exist any rearrangement-invariant space $Y$ for which~\eqref{intr:homoemb} is true at all.
\end{theorem}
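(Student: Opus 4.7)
The strategy is to apply Theorem~\ref{thm:optimalrange}: the optimal target is $X^m = X(\sigma_m')$ with $\sigma_m(g) = \|t^{m/n} g^{**}(t)\|_{X'(0,\infty)}$, provided \eqref{thm:optimalrange:cond} holds. Thus the problem reduces to two entirely concrete tasks for $X = L^{p,q;\A}$: first, decide for which $(p,q,\A)$ satisfying \eqref{thm:optimalrangeexamples-condonglztobebfs} we have $t^{m/n-1}\chi_{(1,\infty)} \in X'(0,\infty)$; second, when it does hold, identify the associate space of the functional $\sigma_m$ on $(0,\infty)$. It is well known (see \citep{OP}) that the associate space of $L^{p,q;\A}$ is equivalent, up to the endpoint exceptions, to $L^{p',q';-\A}$, and the norm of a power--logarithmic function $t^\gamma \ell^{\B}(t)\chi_E(t)$ in a Lorentz--Zygmund space is computable by an elementary integral. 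The first task thereby amounts to verifying that $\int_1^\infty t^{q'(m/n-1/p)-1}\ell^{-q'\alpha_\infty}(t)\,dt < \infty$ (with the usual modification when $q=1$), which happens precisely when $p < n/m$, or $p = n/m$ together with the required lower bound on $\alpha_\infty$; for $p > n/m$, or for $p=n/m$ with insufficient $\alpha_\infty$, the integral diverges, giving at once the nonexistence assertion in the final sentence of the theorem.

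For the existence cases, my plan is to compute $\sigma_m(g)$ explicitly for a nonincreasing $g$, noting that $g^{**}$ can be controlled by $g^*$ via the Hardy operator, then compute the associate space by the characterization
\[
\sigma_m'(f) = \sup_{\sigma_m(g)\leq 1}\int_0^\infty f^*(t)g^*(t)\,dt.
\]
Equivalently, using Proposition~\ref{prop:equivalentformsofonedimensionalinequality}, I can directly compute the smallest rearrangement-invariant norm $\|\cdot\|_{Y(0,\infty)}$ for which
\[
\left\|\int_t^\infty f(s) s^{m/n-1}\,ds\right\|_{Y(0,\infty)} \lesssim \|f\|_{X(0,\infty)}
\]
holds. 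In the subcritical range $p\in(1,n/m)$, the Hardy operator $H_m f(t) = \int_t^\infty f(s)s^{m/n-1}\,ds$ behaves (up to dilation) like $t^{m/n} f^{**}(t)$ on nonincreasing $f$, and a direct evaluation of the resulting Lorentz--Zygmund norm yields the index shift $p \to np/(n-mp)$ while preserving $q$ and $\A$; the edge case $p=q=1$ with $\alpha_0\geq 0,\ \alpha_\infty\leq 0$ works by the same computation together with the characterization \eqref{thm:optimalrangeexamples-condonglztobebfs} of when $L^{p,q;\A}$ is truly a Banach function space.

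The principal difficulty is the critical case $p = n/m$, which splits into the seven remaining subcases according to the values of $q$, $\alpha_0$, and $\alpha_\infty$. Here the factor $t^{m/n}$ exactly cancels the scaling of $t^{1/p}$, so the tail contribution of $H_m f$ is governed by logarithmic (and possibly iterated logarithmic) weights; a careful computation of $\|H_m f\|_{Y(0,\infty)}$ against a candidate Lorentz--Zygmund (or iterated Lorentz--Zygmund) norm determines which particular shift of the logarithmic parameters produces the smallest target. For instance, when $\alpha_0 < 1/q'$ and $\alpha_\infty > 1/q'$, the shift $\A \to \A - 1$ is forced by the fact that $\int_t (\log)^{-q'\alpha_\infty}$ loses one power of logarithm; the appearance of iterated logarithms in the case $q=1,\ \alpha_0 = 0,\ \alpha_\infty > 0$, and the need to separate the behavior near $0$ and near $\infty$ in the cases giving rise to $Y_1$ and $Y_2$, reflect the borderline nature of the associated Hardy inequality. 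Finally, the optimality of each produced space follows automatically from Theorem~\ref{thm:optimalrange}, since that theorem produces \emph{the} smallest rearrangement-invariant target; one need only verify that the candidate from \eqref{thm:optimalrangeexamples:conds} coincides with $X(\sigma_m')$ in each subcase, which in every listed situation reduces to applying a standard duality formula for (generalized) Lorentz--Zygmund spaces recorded in \citep{OP,EOP-broken,FS}.
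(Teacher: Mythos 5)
Your opening move is exactly the paper's: invoke Theorem~\ref{thm:optimalrange}, so that $X^m = X(\sigma_m')$ with $\sigma_m(g)=\|t^{m/n}g^{**}(t)\|_{X'(0,\infty)}$; reduce the nonexistence assertion to checking whether $t^{m/n-1}\chi_{(1,\infty)}\in X'(0,\infty)$, with $X' \sim L^{p',q';-\A}$ and an explicit tail integral. That part is sound and is what the paper does.

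The gap is in the identification of $X^m$ in the positive cases. You write that, for nonincreasing $f$, the Hardy-type operator $H_m f(t)=\int_t^\infty f(s)s^{m/n-1}\,\d s$ ``behaves (up to dilation) like $t^{m/n}f^{**}(t)$'' and propose to read off the target norm from that. This is false even for the simplest test functions: with $f=\chi_{(0,a)}$ one has $H_m f\equiv 0$ on $(a,\infty)$ while $t^{m/n}f^{**}(t)=a\,t^{m/n-1}$ there, and $H_m f(0^+)=\tfrac{n}{m}a^{m/n}>0$ while $t^{m/n}f^{**}(t)\to 0$ as $t\to 0^+$. The two expressions are not pointwise (nor in general norm-wise) comparable; the actual relationship is one of \emph{adjointness}: $\int_0^\infty g(t) H_m f(t)\,\d t = \int_0^\infty f(s)\,s^{m/n}g^{**}(s)\,\d s$, which is precisely the content of Proposition~\ref{prop:equivalentformsofonedimensionalinequality} and what makes $\sigma_m$ the associate norm of the optimal $Y$. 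Because the reduction you state is not true, the ``direct evaluation'' you propose in the subcritical case, and the unspecified ``careful computation'' in the critical case, do not actually produce the spaces in \eqref{thm:optimalrangeexamples:conds}.

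What the paper's proof actually does, and what your sketch would need to supply, is: (i) identify $(X^m)'$ by showing that $\sigma_m(g)=\|t^{m/n}g^{**}(t)\|_{L^{p',q';-\A}(0,\infty)}$ is equivalent to the Lorentz--$\Gamma$-type norm $\|t^{1/p'-1/q'+m/n}\ell^{-\A}(t)g^{**}(t)\|_{L^{q'}}$, i.e.\ to $\|g\|_{L^{(\frac{np'}{n+mp'},q';-\A)}}$; the upper estimate uses the pointwise bound $[\tau^{m/n}g^{**}(\tau)]^*(t)\le \sup_{\tau\ge t}\tau^{m/n}g^{**}(\tau)$ together with the boundedness of the supremum operator on Lorentz--Zygmund spaces (quoted as \citep[Theorem~3.2]{GOP}, with a separate elementary argument when $q=1$), and the lower estimate uses \citep[Lemma~4.10]{EMMP}; and then (ii) pass to the associate space of $L^{(\frac{np'}{n+mp'},q';-\A)}$ via the duality tables in \citep{OP} (Theorems~3.8, 6.2, 6.6, 6.7, 6.9), which is where the subcase list and, in the critical case $p=n/m$, the iterated logarithms and the mixed norms $Y_1$, $Y_2$ actually come from. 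Your appeal to ``standard duality formulas'' is not wrong in spirit, but without first getting $(X^m)'$ into the $L^{(\cdot)}$ form --- which requires the supremum-operator lemma, not the false $H_m f\approx t^{m/n}f^{**}$ comparison --- there is no space whose associate you can compute.
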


It turns out that the optimal target space for an Orlicz space $L^A$ depends on whether the integral
\begin{equation}\label{thm:optimalrangerforOrlicz:connek}
\int^\infty \left(\frac{s}{A(s)}\right)^\frac{m}{n-m}\,\d s
\end{equation}
converges or not. Assume that $m<n$ and that $A$ is a Young function such that
\begin{equation}\label{thm:optimalrangerforOrlicz:con0}
\int_0 \left(\frac{s}{A(s)}\right)^\frac{m}{n-m}\,\d s < \infty.
\end{equation}
Let $a$ be the left-continuous derivative of $A$, that is, $a$ and $A$ are related as in \eqref{pre:integralformofYoungfunction}.
We define a function $E_m$ by
\begin{equation}\label{thm:optimalrangerforOrlicz:defEm}
E_m(t) = \int _0^t e_m(s)\,\d s,\ t\geq0,
\end{equation}
where $e_m$ is the nondecreasing, left-continuous function in $[0, \infty)$ satysfying
\begin{equation*}
 e_m^{-1}(t) =  \left(\int _{a^{-1}(t)}^{\infty}\left(\int _0^s \left(\frac{1}{a(\tau)}\right)^{\frac{m}{n-m}}\,\d\tau\right)^{-\frac{n}{m}}\frac1{a(s)^{\frac{n}{n-m}}}\,\d s\right)^{\frac{m}{m-n}}\quad\text{for $t\geq0$}.
\end{equation*}
Then $E_m$ is a finite-valued Young function satisfying \eqref{pre:conditiononAforOrliczLorentz} with $p = \frac{n}{m}$ (see~\citep[Proposition~2.2]{MR2073127}).
\begin{theorem}\label{thm:optimalrangerforOrlicz}
Assume that $m < n$ and let $A$ be a Young function satisfying \eqref{thm:optimalrangerforOrlicz:con0}. Set
\begin{equation*}
X^m=\begin{cases}
L(\frac{n}{m}, 1, E_m),\quad&\text{the integral \eqref{thm:optimalrangerforOrlicz:connek} diverges},\\
L(\frac{n}{m}, 1, E_m)\cap L^\infty,\quad&\text{the integral \eqref{thm:optimalrangerforOrlicz:connek} converges},
\end{cases}
\end{equation*}
where $E_m$ is defined by \eqref{thm:optimalrangerforOrlicz:defEm}.

Then $X^m$ is the optimal (the smallest) target space for $L^A$ in \eqref{intr:homoemb}.

Conversely, if A does not satisfy~\eqref{thm:optimalrangerforOrlicz:con0}, then there does not exist any rearrangement-invariant space $Y$ for which~\eqref{intr:homoemb} is true with $X=L^A$ at all.
\end{theorem}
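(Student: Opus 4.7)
The plan is to apply the general Theorem~\ref{thm:optimalrange} with $X = L^A$ and then identify the abstract optimal r.i.\ target $X^m$ as a concrete Orlicz--Lorentz space. The first step is to verify that the abstract existence condition~\eqref{thm:optimalrange:cond} with $X = L^A$ is equivalent to~\eqref{thm:optimalrangerforOrlicz:con0}. By the definition of the Luxemburg norm, $t^{m/n-1}\chi_{(1,\infty)}(t) \in L^{\widetilde A}(0,\infty)$ if and only if $\int_1^\infty \widetilde A(c\, t^{m/n - 1})\,\d t < \infty$ for some $c > 0$; the substitution $s = c\, t^{m/n - 1}$ converts this into $\int_0 \widetilde A(s) s^{-n/(n-m) - 1}\,\d s < \infty$. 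Using the identity $\widetilde A(s) = \int_0^s a^{-1}(\tau)\,\d \tau$ together with the change of variables $u = a^{-1}(s)$ and the standard estimate $A(u) \approx u\, a(u)$, this transforms further into $\int_0 (u/A(u))^{m/(n-m)}\,\d u < \infty$, yielding the desired equivalence. The converse direction of the theorem (nonexistence of any r.i.\ target when~\eqref{thm:optimalrangerforOrlicz:con0} fails) is then immediate from Theorem~\ref{thm:optimalrange}.

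Next, under~\eqref{thm:optimalrangerforOrlicz:con0}, Theorem~\ref{thm:optimalrange} provides the abstract optimal r.i.\ target $X^m$ whose associate norm is $\sigma_m(g) = \|t^{m/n} g^{**}(t)\|_{L^{\widetilde A}(0,\infty)}$. By the reduction principle (Theorem~\ref{thm:reductionprinciple} with Proposition~\ref{prop:equivalentformsofonedimensionalinequality}), identifying $X^m$ amounts to finding the smallest r.i.\ space $Y(0,\infty)$ for which
\begin{equation*}
\Bigl\|\int_t^{\infty} f(s) s^{m/n - 1}\,\d s\Bigr\|_{Y(0,\infty)} \leq C \|f\|_{L^A(0,\infty)}
\end{equation*}
holds for all $f \in \Mpl(0,\infty)$. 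The Young function $E_m$ in~\eqref{thm:optimalrangerforOrlicz:defEm} is tailored precisely for this optimum: its defining formula for $e_m^{-1}$ is obtained by inverting the Hardy-type estimate matching the growth of $a$ against the kernel $s^{m/n - 1}$, generalising the first-order construction of~\citep[Proposition~2.2]{MR2073127}. A direct Luxemburg-norm calculation then shows $Y = L(n/m, 1, E_m)$ satisfies the above inequality, and optimality follows by matching the associate norm of $L(n/m, 1, E_m)$ with $\sigma_m$ up to equivalence constants depending only on $m$ and $n$. Finally, when~\eqref{thm:optimalrangerforOrlicz:connek} converges, a direct application of H\"older's inequality for Orlicz spaces yields $\int_0^\infty f(s) s^{m/n - 1}\,\d s \lesssim \|f\|_{L^A}$, so the Hardy operator is also $L^\infty$-bounded, functions in $V_0^m L^A(\rn)$ are automatically bounded, and the optimal target becomes $L(n/m, 1, E_m) \cap L^\infty$; when~\eqref{thm:optimalrangerforOrlicz:connek} diverges, a suitable test-function argument rules out any $L^\infty$ bound and the $L^\infty$ factor drops.

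The crux of the argument is the explicit identification in the second paragraph: showing that the abstract norm $\sigma_m$ coincides, up to equivalent r.i.\ norms, with the associate of $\|\cdot\|_{L(n/m, 1, E_m)}$. This requires careful manipulation of the nested integral defining $e_m^{-1}$ combined with the Young conjugation $A \leftrightarrow \widetilde A$, plus verifying that all emerging constants depend only on $m$ and $n$. The case $m = 1$ is handled by the Orlicz-space analysis in~\citep{MR2073127}; the extension to higher $m$ is not automatic via Theorem~\ref{thm:iterationprinciple} because the Orlicz--Lorentz class is not manifestly closed under the operation $X \mapsto X^1$, so one must either carry out a direct computation mirroring the $m = 1$ case or verify \emph{ad hoc} that iterating the first-order construction yields the asserted $E_m$ up to norm equivalence.
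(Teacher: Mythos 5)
Your outline follows the paper's strategy: invoke Theorem~\ref{thm:optimalrange} to obtain the abstract optimal target $X^m$ with associate norm $\sigma_m(g)=\|t^{m/n}g^{**}(t)\|_{L^{\widetilde A}(0,\infty)}$, reduce via Theorem~\ref{thm:reductionprinciple} to the one-dimensional Hardy inequality with kernel $s^{m/n-1}$, and identify $X^m$ with $L(\tfrac{n}{m},1,E_m)$, intersecting with $L^\infty$ when~\eqref{thm:optimalrangerforOrlicz:connek} converges. The equivalence of~\eqref{thm:optimalrange:cond} with~\eqref{thm:optimalrangerforOrlicz:con0}, the change-of-variables $s=t^{m/n-1}$, and the $L^\infty$ argument via Orlicz--H\"older all match the paper.

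The gap is in the middle: the sentence ``a direct Luxemburg-norm calculation then shows $Y=L(\tfrac{n}{m},1,E_m)$ satisfies the above inequality, and optimality follows by matching the associate norm of $L(\tfrac{n}{m},1,E_m)$ with $\sigma_m$'' glosses over the two genuinely hard steps. The paper handles the forward estimate $\|t^{-m/n}\int_t^\infty s^{m/n-1}f(s)\,\d s\|_{L^{E_m}}\lesssim\|f\|_{L^A}$ by citing \citep[Theorem~3.1]{MR2073127}, and obtains the reverse associate-norm inequality $\|g\|_{(L(n/m,1,E_m))'}\lesssim\|g\|_{(X^m)'}$ in the divergent case from the H\"older-type inequality of \citep[Theorem~4.1, (4.2)]{MR2073127}; neither is a routine Luxemburg-norm computation, and your proposal gives no indication of how either would actually be carried out. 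The duality-plus-Fubini manipulation that converts the forward Hardy estimate into the embedding $(L(\tfrac{n}{m},1,E_m))'\hookrightarrow(X^m)'$ is also missing. Finally, the worry in your last paragraph about iterating the $m=1$ construction via Theorem~\ref{thm:iterationprinciple} is a red herring: the paper never iterates here. The reduction principle collapses the $m$-th order problem into a \emph{single} one-dimensional Hardy inequality with kernel $s^{m/n-1}$, and the cited results from \citep{MR2073127} (and the ensuing definition of $E_m$) are already parametrized by the fractional exponent and hence cover all $m<n$ directly; no passage from $m=1$ to general $m$ is required. A minor point: the pointwise claim $A(u)\approx u\,a(u)$ is false for general Young functions, but the correct two-sided estimate $A(u)\leq u\,a(u)\leq A(2u)$ suffices for the integral equivalence you invoke, so this is a cosmetic inaccuracy rather than an error.
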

We also provide optimal domain spaces for Lorentz-Zygmund spaces.
\begin{theorem}\label{thm:optimaldomainexamples}
Let $m < n$ and let $Y=L^{p, q; \A}(\rn)$ where $p,q\in[1,\infty]$ and $\A\in\R^2$. Assume that one of the conditions \eqref{thm:optimalrangeexamples-condonglztobebfs} holds. The space $Y_m$ defined by
\begin{equation*}
Y_m =\begin{cases}L^{1,1;\A},\quad&p=\frac{n}{n-m},\ q=1,\ \alpha_0\geq0,\ \alpha_\infty\leq0,\\
X_1,\quad&p=\frac{n}{n-m},\ q=1,\ \alpha_0 < 0,\ \alpha_\infty\leq0\ \text{or}\\
					&p=\frac{n}{n-m},\ q\in(1,\infty],\ \alpha_\infty\leq0,\\
L^{\frac{np}{n + mp}, q;\A},\quad&p\in(\frac{n}{n-m},\infty),\\
X_2,\quad&p=\infty,\ q\in[1, \infty),\ \alpha_0 + \frac1{q} < 0\ \text{or}\\
					&p=q=\infty,\ \alpha_0\leq0,
\end{cases}
\end{equation*}
where
\begin{align*}
\|f\|_{X_1} &=\sup_{\substack{h\sim f\\h\ge0}}\|t^{1-\frac{m}{n} - \frac1{q}}\ell^{\A}(t)\int_t^\infty h(s)s^{\frac{m}{n} - 1}\,\d s\|_{L^q},\\
\|f\|_{X_2} &\approx\|t^{-\frac1{q}}\ell^{\A}(t)\int_t^\infty f^*(s)s^{\frac{m}{n} - 1}\,\d s\|_{L^q},
\end{align*}
is the optimal (the largest) domain space for $Y$ in \eqref{intr:homoemb}.

In particular, if $\A=[0,0]$, then $X_1 = L^1$ and $X_2=L^{\frac{n}{m}, 1}$.

Conversely, if either $p=\frac{n}{n-m}$ and $\alpha_\infty > 0$ or $p\in[1,\frac{n}{n-m})$, then there does not exist any rearrangement-invariant space $X$ for which~\eqref{intr:homoemb} is true at all.
\end{theorem}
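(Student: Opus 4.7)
The plan is to invoke Theorem~\ref{thm:optimaldomain} for existence and the abstract formula for the optimal domain space, and Theorem~\ref{thm:simplificationofdomainnorm} to simplify this formula whenever $T_{m/n}$ is bounded on $Y'(0,\infty)$. The remaining work is then a parameter-by-parameter analysis which is largely computational.

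First I would dispose of the non-existence claim. Since the fundamental function of $L^{p,q;\A}(\rn)$ behaves like $\varphi_Y(t)\approx t^{1/p}\ell^{\A}(t)$ for $t\geq 1$ (with the standard modification when $p=\infty$), condition~\eqref{thm:optimaldomain:cond} translates into $p>n/(n-m)$, or $p=n/(n-m)$ with $\alpha_\infty\leq 0$. The complementary regimes $p\in[1,n/(n-m))$ and $p=n/(n-m), \alpha_\infty>0$ are exactly the stated non-existence cases, and Theorem~\ref{thm:optimaldomain} then rules out any rearrangement-invariant optimal domain there.

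For the existence part I would split the remaining cases based on whether the operator $T_{m/n}$ defined in~\eqref{rem:simplificationofdomainnormsupopdef} is bounded on $Y'(0,\infty)$. In the cases where $p\in(n/(n-m),\infty)$, where $p=n/(n-m), q=1, \alpha_0\geq 0, \alpha_\infty\leq 0$, and both $p=\infty$ subcases, one can verify using the explicit form of $Y'$ as a Lorentz-Zygmund space and a pointwise computation that $T_{m/n}$ is bounded. Applying Theorem~\ref{thm:simplificationofdomainnorm} then reduces $\tau_m(f)$ to the Hardy-type expression
\begin{equation*}
\left\|\int_t^\infty f^*(s)s^{\frac{m}{n}-1}\,\d s\right\|_{Y(0,\infty)}.
\end{equation*}
Since the integrand is a nonincreasing function of $t$, its Lorentz-Zygmund norm can be written out explicitly. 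Standard weighted Hardy equivalences for Lorentz-Zygmund spaces (as in~\citep{OP}) then identify this with $\|f\|_{L^{np/(n+mp),q;\A}}$ for $p\in(n/(n-m),\infty)$, with $\|f\|_{L^{1,1;\A}}$ for the sub-borderline $p=n/(n-m), q=1, \alpha_0\geq 0, \alpha_\infty\leq 0$, and with $\|f\|_{X_2}$ directly (via the definition of the $L^{\infty,q;\A}$-norm on the nonincreasing integrand) for the two $p=\infty$ subcases.

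In the borderline regimes yielding $X_1$, namely $p=n/(n-m), q=1, \alpha_0<0, \alpha_\infty\leq 0$ and $p=n/(n-m), q\in(1,\infty], \alpha_\infty\leq 0$, the converse part of Theorem~\ref{thm:simplificationofdomainnorm} together with a witness calculation shows that $T_{m/n}$ fails to be bounded on $Y'(0,\infty)$; hence the simplification is not available and the full definition~\eqref{thm:optimaldomain:tau} must be retained. Unfolding that definition with the explicit Lorentz-Zygmund norm of the nonincreasing integrand $\int_t^\infty h(s)s^{m/n-1}\,\d s$ yields exactly the formula stated for $X_1$. Finally, the special identities $X_1=L^1$ and $X_2=L^{n/m,1}$ when $\A=[0,0]$ reduce, by direct computation and Fubini's theorem, to the standard duality between the Hardy averaging operator and its adjoint. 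The main technical obstacle will be the boundedness/unboundedness analysis of $T_{m/n}$ on the associate Lorentz-Zygmund spaces in the borderline cases: this demands a delicate inspection of logarithmic weights at infinity and near zero in each of the parameter subregimes separately. Once those boundaries are correctly located, the rest is a routine (if somewhat lengthy) sequence of Hardy-type computations.
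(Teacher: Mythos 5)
Your plan is correct in its overall logic and arrives at the same case division as the paper, but the route for the final identification step is genuinely different. The paper's proof is a one-paragraph chain of citations: it passes through Remark~\ref{rem:relationshipbetweenembeddingandfractional} to translate the optimal-domain problem into an optimal-range problem for the fractional maximal operator $M_m$ acting on $Y'$, and then pulls the explicit Lorentz--Zygmund answers directly from \citep[Theorem~4.5]{EMMP}, together with Theorem~\ref{thm:simplificationofdomainnorm} and Proposition~\ref{prop:boundednessofTalphaonglz}. You bypass the $M_m$ duality entirely and propose to run the Hardy-type computations by hand after applying Theorem~\ref{thm:simplificationofdomainnorm}. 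That route can be made to work, but two things should be tightened. First, the dichotomy you describe for when $T_{m/n}$ is bounded on $Y'(0,\infty)$ is precisely Proposition~\ref{prop:boundednessofglz}, excuse me, Proposition~\ref{prop:boundednessofTalphaonglz} with $\alpha=m/n$; you should cite it rather than promise to ``verify by a pointwise computation,'' since the $p=\infty$ and borderline $p=\frac{n}{n-m}$ subcases are exactly where the logarithmic bookkeeping is delicate and the paper devotes a separate proposition to it. Second, the phrase ``standard weighted Hardy equivalences for Lorentz--Zygmund spaces then identify this with $\|f\|_{L^{np/(n+mp),q;\A}}$'' is doing the main work of the theorem and is left entirely unspecified; the paper avoids this by outsourcing it to \citep[Theorem~4.5]{EMMP} via the $M_m$ duality. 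If you want to keep the direct route you would need to spell out the weighted Hardy inequality $\|\int_t^\infty f^*(s)s^{m/n-1}\,\d s\|_{L^{p,q;\A}}\approx\|f\|_{L^{np/(n+mp),q;\A}}$ with the associated sharpness, including the sub-borderline case $p=\frac{n}{n-m}$, $q=1$, $\alpha_0\geq0$, $\alpha_\infty\leq0$ yielding $L^{1,1;\A}$; this is not a gap in the logic but a substantial omitted computation. The special identifications $X_1=L^1$ and $X_2=L^{n/m,1}$ for $\A=[0,0]$ are also not as immediate as ``Fubini plus Hardy duality'' suggests: for $X_1$ you must show that the supremum over rearrangements of $f$ of the Hardy tail norm in $L^{n/(n-m),q}$ with $q>1$ collapses to the $L^1$ norm, which requires using the specific structure of the operator, not merely its adjoint.
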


\begin{proof}[Proof of~Theorem~\ref{thm:optimalrangeexamples}]
Note that $X$ is equivalent to a rearrangement-invariant space by \citep[Theorem~7.1]{OP} under our assumptions on $p,q,\A$, which entitles us to use Theorem~\ref{thm:optimalrange}. The condition \eqref{thm:optimalrange:cond} is satisfied if and only if one of the conditions \eqref{thm:optimalrangeexamples:conds} is satisfied. We skip these straightforward computations here and merely note that the description of $X'$ is given by \citep[Theorem~6.2 and Theorem~6.6]{OP}.

Let us turn our attention to \eqref{thm:optimalrangeexamples:conds}. Using \eqref{thm:optimalrange:sigma} and \citep[Theorem~6.2 and Theorem~6.6]{OP}, we have that
\begin{align*}
\|f\|_{(X^m)'} &= \|t^{\frac1{p'}-\frac1{q'}}\ell^{-\A}(t)\left[\tau^\frac{m}{n}f^{**}(\tau)\right]^*(t)\|_{L^{q'}}\leq \|t^{\frac1{p'}-\frac1{q'}}\ell^{-\A}(t)\sup\limits_{t\leq\tau < \infty}\tau^\frac{m}{n}f^{**}(\tau)\|_{L^{q'}}\\
&\lesssim \|t^{\frac1{p'}-\frac1{q'} + \frac{m}{n}}\ell^{-\A}(t)f^{**}(t)\|_{L^{q'}} = \|f\|_{L^{(\frac{np'}{n+mp'},q'; -\A)}},
\end{align*}
where $\frac{np'}{n+mp'}$ is to be interpreted as $\frac{n}{m}$ if $p = 1$. The first inequality follows from the very definition of the nonincreasing rearrangement. The validity of the last inequality is due to \citep[Theorem~3.2]{GOP} if $q\in(1,\infty]$. If $q=1$, then its validity is due to the fact that
\begin{equation*}
\sup\limits_{t>0}t^\frac1{p'}\ell^{-\A}(t)\sup\limits_{t\leq\tau<\infty}\tau^\frac{m}{n}f^{**}(\tau)
= \sup\limits_{\tau>0}\tau^\frac{m}{n}f^{**}(\tau)\sup\limits_{0<t\leq\tau}t^\frac1{p'}\ell^{-\A}(t)\approx\sup\limits_{\tau>0}\tau^{\frac{m}{n} + \frac1{p'}}\ell^{-\A}(\tau)f^{**}(\tau)
\end{equation*}
since the function $t\mapsto t^\frac1{p'}\ell^{-\A}(t)$ is equivalent to a nondecreasing function on $(0, \infty)$ if $p > 1$, and if $p = 1$, then the function $t\mapsto t^\frac1{p'}\ell^{-\A}(t) = \ell^{-\A}(t)$ is nondecreasing on $(0, \infty)$ as $-\alpha_0 \leq0$ and $-\alpha_\infty\geq0$. On the other hand,
\begin{align*}
\|f\|_{L^{(\frac{np'}{n+mp'},q'; -\A)}} &= \|t^{\frac1{p'}-\frac1{q'} + \frac{m}{n}}\ell^{-\A}(t)f^{**}(t)\|_{L^{q'}} \leq
\|t^{\frac1{p'}-\frac1{q'}}\ell^{-\A}(t)\sup\limits_{t\leq\tau<0}\tau^\frac{m}{n}f^{**}(\tau)\|_{L^{q'}}\\
&= \|\sup\limits_{t\leq\tau < \infty}\tau^\frac{m}{n}f^{**}(\tau)\|_{L^{p',q';-\A}}\lesssim\|t^\frac{m}{n}f^{**}(t)\|_{L^{p',q';-\A}}\\
&= \|f\|_{(X^m)'},
\end{align*}
where the last inequality is true thanks to \citep[Lemma~4.10]{EMMP}.

Hence we have shown that $(X^m)'$ is equivalent to $L^{(\frac{np'}{n+mp'},q'; -\A)}$, that is, $X^m$ is equivalent to $\left(L^{(\frac{np'}{n+mp'},q'; -\A)}\right)'$. The assertion then follows from the description of the associate space of $L^{(\frac{np'}{n+mp'},q'; -\A)}$. If $p < \frac{n}{m}$, then $\frac{np'}{n+mp'} > 1$ and $L^{(\frac{np'}{n+mp'},q'; -\A)}$ is equivalent to $L^{\frac{np'}{n+mp'},q'; -\A}$ by \citep[Theorem~3.8]{OP} and its associate space is described by \citep[Theorem~6.2, Theorem~6.6]{OP}. If $p = \frac{n}{m}$, then $\frac{np'}{n+mp'} = 1$ and the associate space of $L^{(1,q'; -\A)}$ is given by \citep[Theorem~6.7, Theorem~6.9]{OP}.
\end{proof}

\begin{proof}[Proof of Theorem~\ref{thm:optimalrangerforOrlicz}]
Let $X=L^A$.  It follows from \citep[Theorem~3.1]{MR2073127} (cf.~also \citep[(3.1) and Remark~3.2]{MR2073127}) that
\begin{equation}\label{thm:optimalrangerforOrlicz:eq1}
\|t^{-\frac{m}{n}}\int_t^\infty s^{\frac{m}{n}-1}f(s)\,\d s\|_{L^{E_m}(0,\infty)}\lesssim\|f\|_{L^A(0,\infty)}\quad\text{for each $f\in L^A(0,\infty)$}.
\end{equation}
In particular, if $f\in L^A(0,\infty)$, then $\int_t^\infty s^{\frac{m}{n}-1}f(s)\,\d s\in L(\frac{n}{m}, 1, E_m)(0,\infty)$. Hence
\begin{align*}
\|g\|_{\left(L(\frac{n}{m}, 1, E_m)\right)'} &= \sup_{\substack{f\in L(\frac{n}{m}, 1, E_m)\\f\neq0}}\frac{\int_0^\infty f^*(t)g^*(t)\,\d t}{\|f\|_{L(\frac{n}{m}, 1, E_m)}}\\
&\geq \sup_{\substack{f\in L^A(0,\infty)\\f\neq0}}\frac{\int_0^\infty g^*(t)\int_t^\infty s^{\frac{m}{n}-1}|f(s)|\,\d s\,\d t}{\|\int_t^\infty s^{\frac{m}{n}-1}|f(s)|\,\d s\|_{L(\frac{n}{m}, 1, E_m)(0,\infty)}}\\
&= \sup_{\substack{f\in L^A(0,\infty)\\f\neq0}}\frac{\int_0^\infty |f(s)|s^{\frac{m}{n}-1}\int_0^s g^*(t)\,\d t\,\d s}{\|t^{-\frac{m}{n}}\int_t^\infty s^{\frac{m}{n}-1}|f(s)|\,\d s\|_{L^{E_m}(0,\infty)}}\\
&\gtrsim\sup_{\substack{f\in L^A(0,\infty)\\f\neq0}}\frac{\int_0^\infty |f(s)|s^{\frac{m}{n}-1}\int_0^s g^*(t)\,\d t\,\d s}{\|f\|_{L^A(0,\infty)}}\\
&= \|t^\frac{m}{n}g^{**}(t)\|_{(L^A)'(0,\infty)}=\|g\|_{(X^m)'},
\end{align*}
where the last inequality is true thanks to \eqref{thm:optimalrangerforOrlicz:eq1}. Hence \eqref{thm:eq:equivalentformsofonedimensionalinequality:reduction} holds with $Y(0,\infty) = L(\frac{n}{m}, 1, E_m)(0,\infty)$ by Proposition~\ref{prop:equivalentformsofonedimensionalinequality}.

If the integral \eqref{thm:optimalrangerforOrlicz:connek} diverges, we have that
\begin{align*}
\|g\|_{\left(L(\frac{n}{m}, 1, E_m)\right)'} &= \sup_{\substack{f\in L(\frac{n}{m}, 1, E_m)\\f\neq0}}\frac{\int_0^\infty f^*(t)g^*(t)\,\d t}{\|f\|_{L(\frac{n}{m}, 1, E_m)}}\\
&\lesssim\sup_{\substack{f\in L(\frac{n}{m}, 1, E_m)\\f\neq0}}\frac{\|t^{-\frac{m}{n}}f^*(t)\|_{L^{E_m}(0,\infty)}\|t^{\frac{m}{n}}g^{**}(t)\|_{(L^A)'(0,\infty)}}{\|f\|_{L(\frac{n}{m}, 1, E_m)}}\\
&= \|g\|_{(X^m)'},
\end{align*}
where the inequality is due to \citep[Theorem~4.1, (4.2)]{MR2073127}. Hence $X^m$ is equivalent to $L(\frac{n}{m}, 1, E_m)$ by virtue of the equivalence of \eqref{pre:embedding} and \eqref{pre:embeddingdual}.

Now, assume that the integral \eqref{thm:optimalrangerforOrlicz:connek} converges. Then
\begin{equation*}
\|\int_t^\infty f(s)s^{\frac{m}{n} - 1}\,\d s\|_{L^\infty(0,\infty)}\lesssim \|t^{\frac{m}{n} - 1}\|_{L^{\widetilde{A}}(0,\infty)}\|f\|_{L^A (0,\infty)} \approx \left(\int_0^\infty\frac{\widetilde{A}(s)}{s^{1+\frac{n}{n-m}}}\,\d s\right)^\frac{n-m}{n}\|f\|_{L^A (0,\infty)},
\end{equation*}
where the integral on the right-hand side is finite thanks to \citep[Lemma~2.3]{MR2073127}. This together with the estimate at the beginning of this proof ensures that \eqref{intr:homoemb} is true with $Y = L(\frac{n}{m}, 1, E_m)\cap L^\infty$ by virtue of Theorem~\ref{thm:reductionprinciple}. The optimality can be shown along the same lines of \citep[Theorem~1.1, pp.~457]{MR2073127} and we omit it here.

Finally, should $t^{\frac{m}{n} - 1}\chi_{(1,\infty)}(t)\in(L^A)'(0,\infty)$ for a Young function $A$, then \eqref{thm:optimalrangerforOrlicz:con0} is necessarily satisfied. This can be proved along the lines of~\citep[Corollary~2.1]{MR2073127}. Hence if \eqref{thm:optimalrangerforOrlicz:con0} is not true, then there is no target space for $L^A$ in \eqref{intr:homoemb} by Theorem~\ref{thm:optimalrange}.
\end{proof}

By Theorem~\ref{thm:simplificationofdomainnorm} the description of the optimal domain space for $Y$ can be significantly simplified provided that the operator $T_\frac{m}{n}$, defined by \eqref{rem:simplificationofdomainnormsupopdef}, is bounded on the representation space of $Y'$. For this reason, it is convenient to know when the operator is bounded on the associate spaces of Lorentz-Zygmund spaces.
\begin{proposition}\label{prop:boundednessofTalphaonglz}
Let $X(0,\infty) = L^{p,q;\A}(0,\infty)$ and assume that one of the conditions \eqref{thm:optimalrangeexamples-condonglztobebfs} holds. Let $\alpha\in(0,1)$. Then $T_\alpha$ is bounded on $X'(0,\infty)$ if and only if
\begin{align*}
\text{either}\ p &= \frac1{1-\alpha},\ q = 1,\ \alpha_0\geq0\ \text{and}\ \alpha_\infty\leq0\\
\text{or}\ p&\in(\frac1{1-\alpha}, \infty].
\end{align*}
\end{proposition}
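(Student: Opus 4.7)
My plan is to reduce the statement to a one-dimensional weighted supremum inequality, characterize that inequality by test functions for necessity and by a standard Hardy-type majorization for sufficiency, and handle the Lorentz--Zygmund bookkeeping through the OP duality formulae.

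The first step is to describe $X'(0,\infty)$ explicitly. Under \eqref{thm:optimalrangeexamples-condonglztobebfs} the associate space is itself (equivalent to) a Lorentz--Zygmund space whose norm can be read off from \citep[Theorem~6.2, Theorem~6.6, Theorem~6.7, Theorem~6.9]{OP}: in the generic case $p\in(1,\infty)$ one has $\|g\|_{X'(0,\infty)}\approx\|t^{1/p'-1/q'}\ell^{-\A}(t)g^*(t)\|_{L^{q'}(0,\infty)}$, and the endpoints $p=1$, $q=1$ and $p=\infty$ are treated by the corresponding endpoint cases from \citep{OP}. Because $T_\alpha g$ is automatically nonincreasing, $(T_\alpha g)^*=T_\alpha g$, and the boundedness of $T_\alpha$ on $X'(0,\infty)$ becomes the weighted inequality
\begin{equation*}
\bigl\|t^{1/p'-1/q'-\alpha}\ell^{-\A}(t)\sup_{t\le s<\infty}s^{\alpha}g(s)\bigr\|_{L^{q'}(0,\infty)}\lesssim \bigl\|t^{1/p'-1/q'}\ell^{-\A}(t)g(t)\bigr\|_{L^{q'}(0,\infty)}
\end{equation*}
for all nonnegative nonincreasing $g$ on $(0,\infty)$.

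For the necessity direction I would plug in the family of test functions $g=\chi_{(0,a)}$, for which $\sup_{t\le s}s^{\alpha}g(s)=a^{\alpha}\chi_{(0,a)}(t)$, whence $T_{\alpha}g(t)=(a/t)^{\alpha}\chi_{(0,a)}(t)$. The resulting ratio of the two sides of the displayed inequality is a quotient of truncated weighted integrals of $t^{q'/p'-1-\alpha q'}\ell^{-\A q'}(t)$ and $t^{q'/p'-1}\ell^{-\A q'}(t)$ over $(0,a)$. Taking $a\to\infty$ (respectively $a\to 0^{+}$) and balancing powers and logarithms then forces $1/p'\le\alpha$, i.e.\ $p\ge 1/(1-\alpha)$, with $p=1/(1-\alpha)$ surviving only in the endpoint $q=1$ together with the monotonicity-correcting sign conditions $\alpha_0\ge 0$ and $\alpha_\infty\le 0$. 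The case $p<1/(1-\alpha)$ is ruled out by noting that the integral in question diverges, so even $\chi_{(0,a)}$ is mapped out of $X'$.

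For sufficiency I will use the pointwise bound, valid for nonincreasing $g\ge 0$,
\begin{equation*}
\sup_{t\le s<\infty}s^{\alpha}g(s)\;\le\; t^{\alpha}g(t)+\alpha\int_{t}^{\infty}s^{\alpha-1}g(s)\,\d s,
\end{equation*}
which reduces the problem to (i) a pointwise estimate $t^{-\alpha}\cdot t^{\alpha}g(t)=g(t)$, handled trivially, and (ii) a weighted Hardy-type inequality for the operator $f\mapsto t^{-\alpha}\int_{t}^{\infty}s^{\alpha-1}f(s)\,\d s$ on the weighted $L^{q'}$ space with weight $t^{q'(1/p'-1/q')}\ell^{-\A q'}(t)$. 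When $p>1/(1-\alpha)$ the exponent drops strictly and standard weighted Hardy inequalities (as used throughout \citep{OP,EMMP}) close the estimate; at the borderline $p=1/(1-\alpha)$, $q=1$, the Hardy inequality collapses into a direct computation, and precisely the sign assumptions $\alpha_0\ge 0$, $\alpha_\infty\le 0$ guarantee that the resulting logarithmic weight is nondecreasing near $0$ and nonincreasing near $\infty$, so the iterated integral telescopes and gives the bound. The main obstacle is keeping the broken logarithmic factors $\ell^{\A}$ under control at the endpoint $p=1/(1-\alpha)$, where the power factor cancels and $\A$ becomes the decisive parameter; the two parts $(0,1)$ and $[1,\infty)$ must be analysed separately because the broken-log function has different signs of its exponents there, which is exactly why both conditions $\alpha_0\ge 0$ and $\alpha_\infty\le 0$ must be simultaneously imposed.
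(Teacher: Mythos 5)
The proposal takes a genuinely different route from the paper.  The paper's proof for the bulk of the cases is a two-step duality argument: it identifies $X'(0,\infty)$ with $L^{p',q';-\A}$ when the formal duality is valid and then cites \citep[Theorem~4.5]{EMMP}, which already characterizes boundedness of $T_\alpha$ on Lorentz--Zygmund spaces.  Your plan instead writes the weighted inequality out explicitly, tests it on characteristic functions for necessity, and uses the pointwise majorization $\sup_{t\le s<\infty}s^\alpha g(s)\le t^\alpha g(t)+\alpha\int_t^\infty s^{\alpha-1}g(s)\,\d s$ (which is correct for nonincreasing $g$) plus a weighted Hardy inequality for sufficiency.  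This is a more elementary and self-contained route, and for $p\in(1,\infty)$ it is sound: there the associate norm really is $\|t^{1/p'-1/q'}\ell^{-\A}(t)g^*(t)\|_{L^{q'}}$, $T_\alpha g$ is automatically nonincreasing, and both your test-function necessity analysis and your sufficiency decomposition go through.

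There is, however, a genuine gap at $p=\infty$ (which the statement includes on the ``bounded'' side), and this is precisely where the paper has to do separate work.  For $p=\infty$, $q\in[1,\infty)$ with $\alpha_0+\frac1q<0$, and for $p=q=\infty$ with $\alpha_\infty<0$, the associate space is not $L^{1,q';-\A}$ with a norm expressed through $g^*$; by \citep[Theorem~6.2, Theorem~6.6]{OP} it is a generalized Lorentz--Zygmund space $L^{(1,q';\B,\C)}$ whose norm involves $g^{**}$ (and an extra logarithmic layer), or, in the case $p=q=\infty$, $\alpha_\infty<0$, it is $f\mapsto\|\ell^{-\alpha_0}f^*\|_{L^1(0,1)}+\|f\|_{L^1(0,\infty)}$, which is not of the form you posit.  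Your reduction to an inequality between weighted $L^{q'}$-norms of the nonincreasing function $g$ therefore does not apply there, and the pointwise majorization for $\sup_{t\le s}s^\alpha g(s)$ does not directly yield an estimate on $(T_\alpha g)^{**}$.  This is exactly why the paper invokes $(T_\alpha f)^{**}(t)\lesssim T_\alpha f^{**}(t)$ from \citep[Lemma~4.1]{MusilOlhava2018} and treats $p=\infty$ cases by separate computations with \citep[Theorem~3.2]{GOP}.  Saying the endpoints ``are treated by the corresponding endpoint cases from \citep{OP}'' acknowledges that the formulas change but does not resolve the fact that the whole framework of your argument (a single weighted inequality for $g^*$) no longer covers them; that step needs to be supplied.
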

\begin{proof}
If $p = \frac1{1-\alpha}$, $q=1$, $\alpha_0\geq0$ and $\alpha_\infty\leq0$, or $p\in(\frac1{1-\alpha}, \infty)$, or $p=q=\infty$ and $\alpha_\infty\geq0$, then $T_\alpha$ is bounded on $X'(0,\infty)$. On the other hand, if $p\in[1, \frac1{1-\alpha})$, or $p = \frac1{1-\alpha}$, $q=1$, $\alpha_0 < 0$ or $\alpha_\infty > 0$, or $p = \frac1{1-\alpha}$ and $q\in(1,\infty]$, then $T_\alpha$ is not bounded on $X'(0,\infty)$. These facts follow from the fact that the associate space of $X(0,\infty)$ is $L^{p', q'; -\A}$ (cf.~\citep[Theorem~6.2, Theorem~6.6]{OP}) and the fact that $T_\alpha$ is bounded on $X(0,\infty)$ if and only if
\begin{align*}
\text{either}\ p&\in[1, \frac1{\alpha})\\
\text{or}\ p&=\frac1{\alpha},\ q=\infty,\ \alpha_0\leq0\ \text{and}\ \alpha_\infty\geq0,
\end{align*}
which was shown in the proof of \citep[Theorem~4.5]{EMMP}.

Now, we shall prove that $T_\alpha$ is bounded on $X'(0,\infty)$ in the remaining cases, that is, $p=\infty$ and $q\in[1,\infty)$, or $p=q=\infty$ and $\alpha_\infty < 0$. Assume that $p=q=\infty$, $\alpha_\infty < 0$. Then by \citep[Theorem~6.2]{OP} the norm on $X'(0,\infty)$ is given by
\begin{equation*}
\|f\|_{X'(0,\infty)} = \|\ell^{-\alpha_0}(t)f^*(t)\|_{L^1(0,1)} + \|f\|_{L^1(0,\infty)},
\end{equation*}
and $T_\alpha$ is bounded on $X'(0,\infty)$ because
\begin{align*}
\|T_\alpha f\|_{X'(0,\infty)} &= \int_0^1 \ell^{-\alpha_0}(t)\left[s^{-\alpha}\sup\limits_{s\leq\tau < \infty}\tau^\alpha f^*(\tau)\right]^*(t)\,\d t + \int_0^\infty\left[s^{-\alpha}\sup\limits_{s\leq\tau < \infty}\tau^\alpha f^*(\tau)\right]^*(t)\,\d t\\
&=\int_0^\infty\left(\ell^{-\alpha_0}(t)\chi_{(0,1)}(t) + 1\right)\left[s^{-\alpha}\sup\limits_{s\leq\tau < \infty}\tau^\alpha f^*(\tau)\right]^*(t)\\
&\leq \int_0^\infty\left(\ell^{-\alpha_0}(t)\chi_{(0,1)}(t) + 1\right)\sup\limits_{t\leq s < \infty}s^{-\alpha}\sup\limits_{s\leq\tau < \infty}\tau^\alpha f^*(\tau)\,\d t\\
&=\int_0^\infty\left(\ell^{-\alpha_0}(t)\chi_{(0,1)}(t) + 1\right)t^{-\alpha}\sup\limits_{t\leq\tau < \infty}\tau^\alpha f^*(\tau)\,\d t\\
&\lesssim\int_0^\infty\left(\ell^{-\alpha_0}(t)\chi_{(0,1)}(t) + 1\right)f^*(t)\,\d t\\
&=\|f\|_{X'(0,\infty)},
\end{align*}
where the last inequality is true due to \citep[Theorem~3.2]{GOP}.

If $p=\infty$, $q\in[1,\infty)$ and $\alpha_\infty + \frac1{q}\geq0$, then $X'$ is $L^{(1, q'; \B, \C)}$ for appropriate $\B, \C\in\R^2$ (cf.~\citep[Theorem~6.2, Theorem~6.6]{OP}). It follows from \citep[Lemma~4.1]{MusilOlhava2018} that
\begin{equation*}
(T_\alpha f)^{**}(t)\lesssim T_\alpha f^{**}(t)\quad\text{for each}\ f\in\M(0,\infty),\ t > 0.
\end{equation*}
Hence
\begin{align*}
\|T_\alpha f\|_{X'(0,\infty)} &= \|t^{1-\frac1{q'}}\ell^\B(t)\ell\ell^\C(t)(T_\alpha f)^{**}(t)\|_{q'}\lesssim\|t^{1-\frac1{q'}}\ell^\B(t)\ell\ell^\C(t)T_\alpha f^{**}(t)\|_{q'}\\
&\lesssim \|t^{1-\frac1{q'}}\ell^\B(t)\ell\ell^\C(t)f^{**}(t)\|_{q'} = \|f\|_{X'(0,\infty)},
\end{align*}
where the last inequality is true thanks to \citep[Theorem~3.2]{GOP} if $q\in(1,\infty)$. If $q=1$, then the last inequality is in fact an equality (up to a positive multiplicative constant), which follows from interchanging the order of the suprema and the fact that the function
\begin{equation*}
t\mapsto t\ell^\B(t)\ell\ell^\C(t)
\end{equation*}
is equivalent to a nondecreasing function on $(0,\infty)$.

Finally, if $p=\infty$, $q\in[1,\infty)$ and $\alpha_\infty + \frac1{q} < 0$, we can proceed similarly, omitting the proof here.
\end{proof}

\begin{proof}[Proof of Theorem~\ref{thm:optimaldomainexamples}]
Since a rearrangement-invariant space $X$ is the optimal (the largest) domain space for a given rearrangement-invariant space $Y$ in the inequality \eqref{intr:homoemb} if and only if $X'$ is the optimal (the smallest) range partner for $Y'$ with respect to $M_{m}$ (cf.~Remark~\ref{rem:relationshipbetweenembeddingandfractional}), the theorem follows from Theorem~\ref{thm:optimaldomain}, \citep[Theorem~4.5]{EMMP}, Theorem~\ref{thm:simplificationofdomainnorm} and the Proposition~\ref{prop:boundednessofTalphaonglz} with $\alpha=\frac{m}{n}$.
\end{proof}

\section{Optimal embeddings of Orlicz--Sobolev spaces into Orlicz spaces}\label{sec:orliczexamples}
By Theorem~\ref{thm:reductionprinciple} the question of optimality in \eqref{intr:homoemb} is equivalent to the question of optimality in the one-dimensional inequality \eqref{thm:eq:equivalentformsofonedimensionalinequality:reduction}. The latter question was extensively studied (among other things) within the class of Orlicz spaces in \citep[Chapter~3]{VejtekPhD}. This enables us to look for optimal spaces in \eqref{intr:homoemb} within the class of Orlicz spaces. Since the optimal Orlicz space (provided that it exists) for an Orlicz space is sometimes simpler to describe than the corresponding optimal rearrangement-invariant space, especially in limit cases, the optimal Orlicz space is sometimes more convenient for applications.

We say that an Orlicz space $L^B$ over $\rn$ is the \emph{optimal target space within the class of Orlicz spaces} for an Orlicz space $L^A$ over $\rn$ in \eqref{intr:homoembOrlicz} if \eqref{intr:homoembOrlicz} is satisfied and whenever \eqref{intr:homoembOrlicz} is satisfied for another Orlicz space $L^C$ over $\rn$ in place of $L^B$, $L^C$ is larger than $L^B$, that is, $L^B\hookrightarrow L^C$. We say that an Orlicz space $L^A$ over $\rn$ is the \emph{optimal domain space within the class of Orlicz spaces} for an Orlicz space $L^B$ over $\rn$ in \eqref{intr:homoembOrlicz} if \eqref{intr:homoembOrlicz} is satisfied and whenever \eqref{intr:homoembOrlicz} is satisfied for another Orlicz space $L^C$ over $\rn$ in place of $L^A$, $L^C$ is smaller than $L^A$, that is, $L^C\hookrightarrow L^A$. We stress that the key difference from the prior sections is that the competing spaces are from the class of Orlicz spaces only, not from the class of all rearrangement-invariant spaces.

As it was already noted in Remark~\ref{rem:relationshipbetweenembeddingandfractional}, there is an intimate connection between the inequality \eqref{intr:homoembOrlicz} and the boundedness of the fractional maximal operator.  Optimality of Orlicz spaces for the latter was studied in \citep{VejtekPhD, Musil2019}. The combination of these results with appropriate duality principles appears to be useful for our purposes. We omit proofs in this section because they are lengthy and technical. The interested reader can trace the key ideas in \citep{VejtekPhD, Musil2019}.

Let $m < n$ and let $A$ be a Young function satisfying \eqref{thm:optimalrangerforOrlicz:con0}. We set
\begin{equation*}\label{thm:optimalorliczrange:defHinf}
H^\infty =\lim\limits_{t\rightarrow\infty}H_m(t)
\end{equation*}
where $H_m$ is defined by
\begin{equation*}\label{thm:optimalorliczrange:defHm}
H_m(t) = \left(\int_0^t \left(\frac{s}{A(s)}\right)^\frac{m}{n-m}\,\d s\right)^\frac{n-m}{n},\ t\geq0.
\end{equation*}
Note that $H^\infty = \infty$ if and only if the integral \eqref{thm:optimalrangerforOrlicz:connek} diverges.
Finally, we define
\begin{equation}\label{thm:optimalorliczrange:defDm}
D_m(t)=\begin{cases}
\left(t\frac{A(H_m^{-1}(t))}{H_m^{-1}(t)}\right)^\frac{n}{n-m},\quad&0\leq t < H^\infty,\\
\infty,\quad&H^\infty\leq t <\infty.
\end{cases}
\end{equation}

The following theorem is an application of Theorem~\ref{thm:reductionprinciple} and \citep[Theorem~3.4.1]{VejtekPhD}.
\begin{theorem}\label{thm:optimalorliczrange}
Let $m < n$ and let $A$ be a Young function satisfying \eqref{thm:optimalrangerforOrlicz:con0}. Define the Young function $A_m$ by
\begin{equation}\label{thm:optimalorliczrange:defAm}
A_m(t) = \int_0^t\frac{D_m(s)}{s}\,\d s,\ t\geq0,
\end{equation}
where the function $D_m$ is defined by \eqref{thm:optimalorliczrange:defDm}.

Then the Orlicz space $L^{A_m}$ is the optimal (the smallest) target space for $L^A$ in \eqref{intr:homoembOrlicz} within the class of Orlicz spaces.

Conversely, if~\eqref{thm:optimalrangerforOrlicz:con0} is not true, then there does not exist any Orlicz space $L^B$ for which~\eqref{intr:homoembOrlicz} is true at all.
\end{theorem}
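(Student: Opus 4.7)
The plan is to reduce the $n$-dimensional Orlicz--Sobolev inequality \eqref{intr:homoembOrlicz} to a one-dimensional weighted Hardy-type inequality via Theorem~\ref{thm:reductionprinciple} and then invoke the optimality theory for Orlicz spaces developed in~\cite{VejtekPhD} to identify the optimal Young function.

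By Theorem~\ref{thm:reductionprinciple} applied with $X=L^A$ and $Y=L^B$, the inequality \eqref{intr:homoembOrlicz} holds if and only if the one-dimensional inequality
\[
\left\|\int_t^\infty f(s)s^{\frac{m}{n}-1}\,\d s\right\|_{L^B(0,\infty)} \leq C\|f\|_{L^A(0,\infty)},\quad f\in\Mpl(0,\infty),
\]
holds, with comparable constants. Since the class of Orlicz competitors is preserved on either side of this equivalence, the search for the smallest Orlicz $L^B$ making \eqref{intr:homoembOrlicz} valid is identical to the search for the smallest Orlicz $L^B$ making the above weighted Hardy inequality valid.

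Next I would apply~\cite[Theorem~3.4.1]{VejtekPhD}, which exactly solves the problem of optimal Orlicz targets for weighted Hardy-type operators of the form $f\mapsto\int_t^\infty f(s)w(s)\,\d s$. Specializing to the weight $w(s)=s^{m/n-1}$, the abstract existence condition of that theorem translates to~\eqref{thm:optimalrangerforOrlicz:con0}, and the Young function it produces agrees, up to global equivalence, with the $A_m$ defined in~\eqref{thm:optimalorliczrange:defAm}. To verify this match one performs the substitution $s=H_m^{-1}(\tau)$ in the abstract formula from~\cite{VejtekPhD}, which identifies the dilation-type function produced there with the explicit $D_m$ of~\eqref{thm:optimalorliczrange:defDm}. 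For the converse, if~\eqref{thm:optimalrangerforOrlicz:con0} fails, then Theorem~\ref{thm:optimalrangerforOrlicz} already guarantees that no rearrangement-invariant target space exists for $L^A$ in~\eqref{intr:homoemb}, so a fortiori no Orlicz target space can exist.

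The technical heart of the argument is imported wholesale from~\cite[Theorem~3.4.1]{VejtekPhD}, where the genuine difficulty lies in two places: establishing sufficiency through modular inequalities adapted to the weight $s^{m/n-1}$, and ruling out strictly smaller Orlicz targets by constructing near-extremal test functions (typically supported on dyadic level sets calibrated to $H_m$). I would expect the only work internal to the present proof to be the bookkeeping needed to recast the abstract optimal Young function from~\cite{VejtekPhD} in the explicit closed form~\eqref{thm:optimalorliczrange:defAm}--\eqref{thm:optimalorliczrange:defDm}; this is routine once the correct change of variables is fixed.
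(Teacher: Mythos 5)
Your proposal is correct and follows exactly the route the paper indicates: reduce \eqref{intr:homoembOrlicz} to the one-dimensional inequality \eqref{thm:eq:equivalentformsofonedimensionalinequality:reduction} via Theorem~\ref{thm:reductionprinciple}, then import the Orlicz-optimality result from \citep[Theorem~3.4.1]{VejtekPhD} to identify $A_m$, and obtain the converse from the necessity of \eqref{thm:optimalrangerforOrlicz:con0} already in the wider rearrangement-invariant class (Theorem~\ref{thm:optimalrangerforOrlicz}). The paper itself declines to spell out the translation of the abstract formula from~\citep{VejtekPhD} into \eqref{thm:optimalorliczrange:defDm}--\eqref{thm:optimalorliczrange:defAm}, so your proposal matches the paper's proof in both substance and level of detail.
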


\begin{remark}
The condition \eqref{thm:optimalrangerforOrlicz:con0} is, in fact, also necessary for existence of a target space even in the wider class of rearrangement-invariant spaces (cf.~Theorem~\ref{thm:optimalrangerforOrlicz}).

It is worth noting that (see~\citep[(3.3.6)]{VejtekPhD}) $A_m$ is equivalent to $D_m$ globally. Moreover, either $A_m$ is equivalent to $A\circ H_m^{-1}$ globally if the integral \eqref{thm:optimalrangerforOrlicz:connek} diverges or $A_m$ is equivalent to $A\circ H_m^{-1}$ near zero and $A_m(t)=\infty$ near infinity if the integral \eqref{thm:optimalrangerforOrlicz:connek} converges (see~\citep[(3.3.10)]{VejtekPhD}).

If $I_A < \frac{n}{m}$, where $I_A$ is the upper Boyd index of $A$, defined by \eqref{thm:optimalorliczdomain:Boydupp}, then (see~\citep[(3.4.2)]{VejtekPhD})
\begin{equation*}
A_m^{-1}(t)\approx A^{-1}(t)t^{-\frac{m}{n}}\quad\text{for $t>0$}.
\end{equation*}
\end{remark}

By standard calculations, one can use Theorem~\ref{thm:optimalorliczrange} to obtain optimal Orlicz spaces for some customary Orlicz spaces.
\begin{theorem}
Let $p_0,\,p_\infty\in[1,\infty)$ and $\alpha_0,\,\alpha_\infty\in\R$. Assume that if $p_0=1$, then $\alpha_0\leq0$, and if $p_\infty=1$, then $\alpha_\infty\geq0$.  Let $A(t)$ be a Young function that is equivalent to
\begin{equation*}
\begin{cases}
t^{p_0}\ell^{\alpha_0}(t)\quad&\text{near zero},\\
t^{p_\infty}\ell^{\alpha_\infty}(t)\quad&\text{near infinity}.
\end{cases}
\end{equation*}
The Young function $A_m(t)$, defined by \eqref{thm:optimalorliczrange:defAm}, is equivalent to
\begin{equation*}
\begin{cases}
t^{\frac{np_0}{n-mp_0}}\ell^{\frac{n\alpha_0}{n-mp_0}}(t),\quad&\text{$p_0\in[1,\frac{n}{m})$},\\
e^{-t^{\frac{n}{n-(1+\alpha_0)m}}},\quad&\text{$p_0=\frac{n}{m}$, $\alpha_0>\frac{n-m}{m}$},
\end{cases}
\end{equation*}
near zero and to
\begin{equation*}
\begin{cases}
t^{\frac{np_\infty}{n-mp_\infty}}\ell^{\frac{n\alpha_\infty}{n-mp_\infty}}(t),\quad&\text{$p_\infty\in[1,\frac{n}{m})$},\\
e^{t^{\frac{n}{n-(1+\alpha_\infty)m}}},\quad&\text{$p_\infty=\frac{n}{m}$, $\alpha_\infty < \frac{n-m}{m}$},\\
e^{e^{t^{\frac{n}{n - m}}}},\quad&\text{$p_\infty=\frac{n}{m}$, $\alpha_\infty = \frac{n-m}{m}$},\\
\infty,\quad&\text{$p_\infty=\frac{n}{m}$, $\alpha_\infty>\frac{n-m}{m}$ or}\\
\quad&\text{$p_\infty\in(\frac{n}{m},\infty)$},
\end{cases}
\end{equation*}
near infinity and the Orlicz space $L^{A_m}$ is the optimal (the smallest) target space for $L^A$ in \eqref{intr:homoembOrlicz} within the class of Orlicz spaces.

Conversely, if either $p_0=\frac{n}{m}$ and $\alpha_0\leq\frac{n-m}{m}$ or $p_0\in(\frac{n}{m},\infty)$, then there does not exist any Orlicz space $L^B$ for which~\eqref{intr:homoembOrlicz} is true at all.
\end{theorem}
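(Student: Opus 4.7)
The plan is to apply Theorem~\ref{thm:optimalorliczrange} and compute the optimal Young function $A_m$ explicitly by chasing the recipe $A\to H_m\to H_m^{-1}\to D_m\to A_m$ near $0$ and near $\infty$ separately. First I would verify the integrability condition \eqref{thm:optimalrangerforOrlicz:con0}: the given form of $A$ near zero yields $(s/A(s))^{m/(n-m)}\approx s^{m(1-p_0)/(n-m)}\ell^{-m\alpha_0/(n-m)}(s)$ as $s\to 0^+$, which is integrable near $0$ precisely when either $p_0<n/m$, or $p_0=n/m$ and $\alpha_0>(n-m)/m$. For the complementary parameter range Theorem~\ref{thm:optimalorliczrange} immediately gives the non-existence statement, disposing of the converse. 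In the admissible range, the remark following Theorem~\ref{thm:optimalorliczrange} tells us that $A_m$ is globally equivalent to $D_m$, so it suffices to compute $D_m(t)=(tA(H_m^{-1}(t))/H_m^{-1}(t))^{n/(n-m)}$ asymptotically near each endpoint, the two regimes being decoupled because each depends only on the local form of $A$ there.

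In the subcritical range $p\in[1,n/m)$ the integrand defining $H_m$ is regularly varying with exponent strictly above $-1$, so $H_m(t)\approx t^{(n-mp)/n}\ell^{-m\alpha/n}(t)$; a standard Karamata-type inversion then gives $H_m^{-1}(t)\approx t^{n/(n-mp)}\ell^{m\alpha/(n-mp)}(t)$, and a direct algebraic simplification collapses $D_m(t)$ to $t^{np/(n-mp)}\ell^{n\alpha/(n-mp)}(t)$, producing the power-logarithmic entries in both lists.

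In the critical range $p=n/m$ the integrand is essentially $s^{-1}\ell^{-m\alpha/(n-m)}(s)$. Near infinity, for $\alpha_\infty<(n-m)/m$, the logarithmic integral behaves like $\ell^{(n-(1+\alpha_\infty)m)/n}(t)$ with positive exponent, which inverts to a single exponential and, after absorbing polynomial-logarithmic prefactors, yields $A_m(t)\approx e^{t^{n/(n-(1+\alpha_\infty)m)}}$; the borderline $\alpha_\infty=(n-m)/m$ makes the integral grow as $\log\log t$, so $H_m(t)\approx(\log\log t)^{(n-m)/n}$ and a double inversion produces $A_m(t)\approx e^{e^{t^{n/(n-m)}}}$. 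The symmetric near-zero calculation with $p_0=n/m$ and $\alpha_0>(n-m)/m$ yields $H_m^{-1}(t)$ decaying exponentially to $0$, whence $A_m(t)\approx e^{-t^{n/(n-(1+\alpha_0)m)}}$. Finally, if $p_\infty=n/m$ with $\alpha_\infty>(n-m)/m$, or $p_\infty>n/m$, then $\int^\infty(s/A(s))^{m/(n-m)}\,ds<\infty$, so $H^\infty<\infty$ and $D_m\equiv\infty$ on $[H^\infty,\infty)$ by \eqref{thm:optimalorliczrange:defDm}; hence $A_m(t)=\infty$ near infinity, matching the last two entries.

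The main technical delicacy I expect is the asymptotic inversion of $H_m$ in the critical cases, together with the verification that the polynomial and logarithmic prefactors in $D_m$ are genuinely swallowed by the dominant (double) exponential, so that $A_m$ ends up equivalent (in the global sense used in the theorem) to the cleaned-up Young function listed; the remaining algebra is long but entirely mechanical.
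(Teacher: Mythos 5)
Your proposal is correct and follows exactly the route the paper intends: the paper supplies no explicit proof, merely noting that the result follows from Theorem~\ref{thm:optimalorliczrange} "by standard calculations," and your proof carries out precisely those calculations via the recipe $A\mapsto H_m\mapsto H_m^{-1}\mapsto D_m\simeq A_m$, treating the two endpoints separately, checking the integrability condition \eqref{thm:optimalrangerforOrlicz:con0} for the converse, and invoking the remark that $A_m$ is globally equivalent to $D_m$. The asymptotics you give for $H_m$, its inverse, and $D_m$ in the subcritical, critical-single-exponential, critical-double-exponential, and degenerate ($H^\infty<\infty$) regimes all check out, so this is essentially the same approach the paper has in mind.
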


To complement Theorem~\ref{thm:optimalorliczrange}, we now address the question of optimal domain spaces within the class of Orlicz spaces. If $m < n$ and $B$ is a Young function satisfying
\begin{equation}\label{thm:optimalorliczdomain:condonB}
\sup\limits_{0<t<1}\frac{B(t)}{t^{\frac{n}{n-m}}}<\infty,
\end{equation}
we define the function $G_m$ by
\begin{equation}\label{thm:optimalorliczdomain:defGm}
G_m(t)=t\inf_{0<s\leq t}B^{-1}(s)s^\frac{m-n}{n},\ t > 0.
\end{equation}
It follows from \eqref{thm:optimalorliczdomain:condonB} that $G_m$ is a positive function on $(0,\infty)$.

The following theorem is an application of Theorem~\ref{thm:reductionprinciple} and \citep[Theorem~3.6.1]{VejtekPhD}.
\begin{theorem}\label{thm:optimalorliczdomain}
Let $m < n$ and let $B$ be a Young function satisfying \eqref{thm:optimalorliczdomain:condonB}. Define the Young function $B_m$ by
\begin{equation}\label{thm:optimalorliczdomain:defBm}
B_m(t)=\int_0^t\frac{G_m^{-1}(s)}{s}\,\d s,\ t\geq0,
\end{equation}
where the function $G_m$ is defined by \eqref{thm:optimalorliczdomain:defGm}.

If $I_{B_m} < \frac{n}{m}$, then the Orlicz space $L^{B_m}$ is the optimal (the largest) domain space for $L^B$ in \eqref{intr:homoembOrlicz} within the class of Orlicz spaces.

If $I_{B_m} \geq \frac{n}{m}$, then there is no optimal Orlicz domain space for $L^B$ in \eqref{intr:homoembOrlicz} in the sense that whenever $L^A$ is an Orlicz space that renders \eqref{intr:homoembOrlicz} true, there exists an Orlicz space $L^C$ such that $L^A\subsetneq L^C$ that still renders \eqref{intr:homoembOrlicz} with $L^C$ instead of $L^A$ true.

Conversely, if~\eqref{thm:optimalorliczdomain:condonB} is not true, then there does not exist any Orlicz space $L^A$ for which~\eqref{intr:homoembOrlicz} is true at all.
\end{theorem}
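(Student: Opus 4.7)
The plan is to derive Theorem~\ref{thm:optimalorliczdomain} by combining the reduction principle (Theorem~\ref{thm:reductionprinciple}) with the one-dimensional Orlicz optimal-domain theory for the weighted Hardy-type operator $f\mapsto \int_t^\infty f(s)s^{m/n-1}\,\d s$ developed in \citep[Chapter~3]{VejtekPhD}. Since the representation space of an Orlicz space over $\rn$ is the same Orlicz space over $(0,\infty)$, Theorem~\ref{thm:reductionprinciple} turns the question of the optimal Orlicz domain for \eqref{intr:homoembOrlicz} into the literally identical question for the one-dimensional inequality
\[
\left\|\int_t^{\infty} f(s)\,s^{m/n-1}\,\d s\right\|_{L^B(0,\infty)} \lesssim \|f\|_{L^A(0,\infty)}\qquad\text{for all $f\in\Mpl(0,\infty)$},
\]
and $B_m$ defined by \eqref{thm:optimalorliczdomain:defGm}--\eqref{thm:optimalorliczdomain:defBm} is exactly the candidate produced by the Orlicz-optimal-domain construction for this one-dimensional operator.

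First, I would record the necessary condition \eqref{thm:optimalorliczdomain:condonB}. Applying Proposition~\ref{prop:necessaryconforembedd} to the displayed one-dimensional inequality and using the explicit fundamental function $\varphi_{L^B}(t)=1/B^{-1}(1/t)$ of an Orlicz space yields $\sup_{a\geq 1} a^{1-m/n}B^{-1}(1/a)<\infty$, which after the substitution $t=B^{-1}(1/a)$ is precisely \eqref{thm:optimalorliczdomain:condonB}; this simultaneously settles the converse clause, since the failure of \eqref{thm:optimalorliczdomain:condonB} already rules out any rearrangement-invariant domain. Assuming \eqref{thm:optimalorliczdomain:condonB}, I would check that $G_m$ is positive, finite and nondecreasing on $(0,\infty)$, so that $s\mapsto G_m^{-1}(s)/s$ is a nondecreasing density and $B_m$ is a bona fide Young function. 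The Hardy-type inequality with $A=B_m$ is then established by the computations of \citep[Chapter~3]{VejtekPhD}, which in view of Proposition~\ref{prop:equivalentformsofonedimensionalinequality} and Remark~\ref{rem:relationshipbetweenembeddingandfractional} may equivalently be read as the boundedness $M_m\colon (L^B)'\to (L^{B_m})'$.

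Optimality is obtained by testing the one-dimensional inequality on step functions $f=\chi_{(0,r)}$: the resulting pointwise bound forces any admissible Young function $A$ to dominate $B_m$ in the appropriate Orlicz sense, whence $L^A\hookrightarrow L^{B_m}$. At this point the Boyd-index dichotomy at $n/m$ enters: when $I_{B_m}<n/m$, the function $B_m$ itself satisfies the Hardy-type inequality and is therefore the Orlicz maximum, while when $I_{B_m}\geq n/m$ a limiting argument shows that each admissible Orlicz $L^A$ can be strictly enlarged while remaining admissible, ruling out any Orlicz maximum. The hardest step is this last dichotomy, particularly the non-existence half: one cannot simply invoke Theorem~\ref{thm:optimaldomain}, because the rearrangement-invariant optimum $Y_m$ may coincide with $L^{B_m}$ as a set while failing to be an Orlicz space. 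Instead, given an arbitrary admissible Young function $A$ one must explicitly construct a strictly larger admissible Young function, using the failure of uniform Orlicz control on the relevant dilation operator at scale $n/m$ as the driving mechanism; this is the genuinely Orlicz-theoretic core of \citep[Theorem~3.6.1]{VejtekPhD} and explains the paper's choice to omit the full argument here.
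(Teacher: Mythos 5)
Your proposal follows essentially the same route as the paper: apply the reduction principle (Theorem~\ref{thm:reductionprinciple}) to transfer the question to the one-dimensional weighted Hardy inequality over $(0,\infty)$, observe that the Orlicz representation space coincides with the Orlicz space itself, and invoke the optimal Orlicz-domain analysis of \citep[Theorem~3.6.1]{VejtekPhD}, which is precisely what the paper cites. Your additional remarks about the necessary condition, the role of $G_m$, and the Boyd-index dichotomy correctly identify what the cited one-dimensional theorem must deliver and why the non-existence half cannot be read off from Theorem~\ref{thm:optimaldomain}.
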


\begin{remark}
Assume that $Y=L^B$ is an Orlicz space. Note that the conditions \eqref{thm:optimalorliczdomain:condonB} and \eqref{thm:optimaldomain:cond} are equivalent. Hence not only is there no Orlicz space $L^A$ for which~\eqref{intr:homoembOrlicz} is true if \eqref{thm:optimalorliczdomain:condonB} is not satisfied, but there is no rearrangement-invariant space $X$ for which~\eqref{intr:homoemb} is true at all. We would also like to stress the significant difference between Theorem~\ref{thm:optimalorliczdomain} and Theorem~\ref{thm:optimaldomain}. Whereas there always exists the optimal rearrangement-invariant domain space for a given rearrangement-invariant space $Y$ in \eqref{intr:homoemb} if there exists any rearrangement-invariant domain space, the situation is more complicated within the class of Orlicz spaces. If a Young function $B$ satisfies \eqref{thm:optimalorliczdomain:condonB}, we can define the Young function $B_m$ by \eqref{thm:optimalorliczdomain:defBm}. If $I_{B_m} \geq \frac{n}{m}$, then \eqref{intr:homoembOrlicz} with $L^{B_m}$ on the right-hand side is not satisfied because the Orlicz space $L^{B_m}$ is ``too large''; however, there still exist some Orlicz spaces $L^A$ that render \eqref{intr:homoembOrlicz} true but none of them is optimal. In this situation, we have, loosely speaking, an open set of Orlicz spaces $L^A$ that renders \eqref{intr:homoembOrlicz} true.

It can be shown (see~\citep[(3.5.8)]{VejtekPhD}) that
\begin{equation*}
B_m^{-1}(t)\approx G_m(t)\quad\text{for $t>0$}.
\end{equation*}
Moreover, if $i_B > \frac{n}{n - m}$, where $i_B$ is the lower Boyd index of $B$, defined by \eqref{thm:optimalorliczdomain:Boydlow}, then (see~\citep[(3.6.3)]{VejtekPhD})
\begin{equation}
B_m^{-1}(t)\approx B^{-1}(t)t^\frac{m}{n}\quad\text{for $t > 0$}
\end{equation}
and $I_{B_m} < \frac{n}{m}$ is equivalent to $I_B < \infty$.
\end{remark}

\begin{theorem}
Let $p_0,\,p_\infty\in[1,\infty)$ and $\alpha_0,\,\alpha_\infty\in\R$. Assume that if $p_0=1$, then $\alpha_0\leq0$, and if $p_\infty=1$, then $\alpha_\infty\geq0$.  Let $B(t)$ be a Young function that is equivalent to
\begin{equation*}
\begin{cases}
t^{p_0}\ell^{\alpha_0}(t)\quad&\text{near zero},\\
t^{p_\infty}\ell^{\alpha_\infty}(t)\quad&\text{near infinity}.
\end{cases}
\end{equation*}
If either $p_0=\frac{n}{n-m}$ and $\alpha_0\leq0$ or $p_0\in(\frac{n}{n-m},\infty)$, then the Young function $B_m(t)$, defined by \eqref{thm:optimalorliczdomain:defBm}, is equivalent to
\begin{equation*}
t^{\frac{np_0}{n+mp_0}}\ell^{\frac{n\alpha_0}{n+mp_0}}(t)\quad\text{near zero}
\end{equation*}
and to
\begin{equation*}
\begin{cases}
t^{\frac{np_\infty}{n+mp_\infty}}\ell^{\frac{n\alpha_\infty}{n+mp_\infty}}(t),\quad&\text{$p_\infty\in(\frac{n}{n-m},\infty)$ or},\\
\quad&\text{$p_\infty=\frac{n}{n-m}$, $\alpha_\infty>0$},\\
t,\quad&\text{$p_\infty=\frac{n}{n-m}$, $\alpha_\infty\leq0$ or},\\
\quad&\text{$p_\infty\in[1,\frac{n}{n-m})$},
\end{cases}
\end{equation*}
near infinity and the Orlicz space $L^{B_m}$ is the optimal (the largest) domain space for $L^B$ in \eqref{intr:homoembOrlicz} within the class of Orlicz spaces.

Conversely, if either $p_0=\frac{n}{n - m}$ and $\alpha_0>0$ or $p_0\in[1,\frac{n}{n - m})$, then there does not exist any Orlicz space $L^A$ for which~\eqref{intr:homoembOrlicz} is true at all.
\end{theorem}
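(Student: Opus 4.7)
My approach is to apply Theorem~\ref{thm:optimalorliczdomain} directly and to compute the associated Young function $B_m$ by an explicit asymptotic analysis of $G_m$ under the two-sided power-logarithmic hypothesis on $B$. The first step is to verify the prerequisite~\eqref{thm:optimalorliczdomain:condonB}: inserting $B(t)\approx t^{p_0}\ell^{\alpha_0}(t)$ near zero shows that the stated condition ($p_0=\frac{n}{n-m}$ with $\alpha_0\leq0$, or $p_0>\frac{n}{n-m}$) is equivalent to $\sup_{0<t<1}B(t)/t^{n/(n-m)}<\infty$, so the theorem applies. Next, using the two-sided equivalence $B^{-1}(s)\approx s^{1/p}\ell^{-\alpha/p}(s)$ (with $(p,\alpha)=(p_0,\alpha_0)$ near zero and $(p_\infty,\alpha_\infty)$ near infinity), I would analyse the auxiliary function
\begin{equation*}
F(s):=B^{-1}(s)\,s^{\frac{m-n}{n}}\approx s^{\frac{n-(n-m)p}{np}}\ell^{-\alpha/p}(s),
\end{equation*}
whose infimum over $(0,t]$ is exactly what appears in the definition \eqref{thm:optimalorliczdomain:defGm} of $G_m$.

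Under the near-zero hypothesis, the exponent of $s$ in $F$ is either strictly negative (when $p_0>\frac{n}{n-m}$) or zero combined with $\alpha_0\leq0$, so $F$ is equivalent to a non-increasing function near the origin; hence the infimum of $F$ on $(0,t]$ for small $t$ is attained at $s=t$, giving $G_m(t)\approx t\,F(t)\approx t^{(n+mp_0)/(np_0)}\ell^{-\alpha_0/p_0}(t)$ near zero. Inverting by the standard rule that $t^a\ell^\beta(t)$ inverts to $s^{1/a}\ell^{-\beta/a}(s)$ and substituting into $B_m(t)=\int_0^t G_m^{-1}(s)s^{-1}\,\d s$, together with the elementary asymptotic $\int_0^t s^{\gamma-1}\ell^\delta(s)\,\d s\approx\gamma^{-1}t^\gamma\ell^\delta(t)$ as $t\to0^+$ (valid for $\gamma>0$), then yields the announced near-zero form $B_m(t)\approx t^{np_0/(n+mp_0)}\ell^{n\alpha_0/(n+mp_0)}(t)$.

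For the near-infinity behaviour, two sub-cases arise according to the monotonicity of $F$ at infinity. If $p_\infty>\frac{n}{n-m}$, or $p_\infty=\frac{n}{n-m}$ with $\alpha_\infty>0$, then $F$ is non-increasing at infinity, the infimum remains attained at $s=t$, and the same inversion/integration procedure produces $B_m(t)\approx t^{np_\infty/(n+mp_\infty)}\ell^{n\alpha_\infty/(n+mp_\infty)}(t)$. In the complementary sub-case ($p_\infty\in[1,\frac{n}{n-m})$, or $p_\infty=\frac{n}{n-m}$ with $\alpha_\infty\leq0$), $F$ is non-decreasing at infinity while being non-increasing at zero, so the infimum stabilises to a strictly positive global minimum, whence $G_m(t)\approx t$ and $B_m(t)\approx t$ at infinity. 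In every sub-case the upper Boyd index $I_{B_m}$ equals the polynomial exponent of $B_m$ at infinity, which is either $np_\infty/(n+mp_\infty)<n/m$ or $1<n/m$, so the hypothesis $I_{B_m}<\frac{n}{m}$ of Theorem~\ref{thm:optimalorliczdomain} is automatic and $L^{B_m}$ is the optimal Orlicz domain space.

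Finally, the converse statement reduces to observing that when $p_0\in[1,\frac{n}{n-m})$ or $p_0=\frac{n}{n-m}$ with $\alpha_0>0$, the quotient $B(t)/t^{n/(n-m)}$ blows up as $t\to 0^+$, violating~\eqref{thm:optimalorliczdomain:condonB}; Theorem~\ref{thm:optimalorliczdomain} then precludes the existence of any Orlicz domain space for $L^B$ in \eqref{intr:homoembOrlicz}. The main nuisance throughout will be the careful bookkeeping of the broken-logarithm factors in the two borderline cases $p_0=\frac{n}{n-m}$ and $p_\infty=\frac{n}{n-m}$, where one must justify that the formal asymptotic inversion of a slowly varying factor indeed produces the correct $\ell$-exponent and that the integral defining $B_m$ absorbs no additional log factors.
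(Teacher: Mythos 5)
Your overall strategy---apply Theorem~\ref{thm:optimalorliczdomain} directly, verify condition~\eqref{thm:optimalorliczdomain:condonB}, compute $G_m$ from the power--logarithmic form of $B^{-1}$ by analysing the infimum in \eqref{thm:optimalorliczdomain:defGm}, then invert and integrate to recover $B_m$---is the right one, and the asymptotic bookkeeping you describe (the standard fact that the integral in \eqref{thm:optimalorliczdomain:defBm} absorbs no extra logarithmic factor when the power exponent is positive, and the case split according to whether $F(s)=B^{-1}(s)s^{(m-n)/n}$ eventually decreases or increases) reproduces the stated formulas. The paper itself omits the proof and refers to~\citep{VejtekPhD}, and this is the computation the reference carries out.

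There is, however, one incorrect assertion in the step where you verify the hypothesis $I_{B_m}<\frac{n}{m}$. You claim that $I_{B_m}$ ``equals the polynomial exponent of $B_m$ at infinity.'' That is false in general. For a Young function $A$ with $A(t)\approx t^{q_0}$ near zero and $A(t)\approx t^{q_\infty}$ near infinity (possibly with logarithmic corrections), one has $h_A(t)\approx t^{1/\max(q_0,q_\infty)}$ for $t\in(0,1)$, because the supremum over small $s$ contributes $t^{1/q_0}$, the supremum over large $s$ contributes $t^{1/q_\infty}$, and $t^{1/\max(q_0,q_\infty)}$ is the larger of the two when $t<1$; hence $I_A=\max(q_0,q_\infty)$, \emph{not} $q_\infty$. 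Your identification fails whenever $p_0>p_\infty$, a situation the theorem permits (e.g.\ $p_0$ large and $p_\infty=\frac{n}{n-m}$). The conclusion you want is still true, but for a different reason: the near-zero exponent $np_0/(n+mp_0)$ is strictly increasing in $p_0$ and tends to $n/m$ only as $p_0\to\infty$, so it is strictly below $n/m$ for every finite $p_0$; similarly the near-infinity exponent (either $np_\infty/(n+mp_\infty)$ or $1$) is strictly below $n/m$. Therefore $I_{B_m}=\max$ of the two is strictly less than $n/m$. You need to state and check both exponents. A second, smaller inaccuracy: in the sub-cases $p_\infty=\frac{n}{n-m}$, $\alpha_\infty=0$ (and $p_0=\frac{n}{n-m}$, $\alpha_0=0$) the auxiliary function $F$ is asymptotically constant rather than genuinely monotone; the infimum in \eqref{thm:optimalorliczdomain:defGm} nonetheless stabilises to a positive constant, so $G_m(t)\approx t$ and the claimed $B_m(t)\approx t$ near infinity survives, but the argument should be phrased accordingly.
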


\begin{remark}
Loosely speaking, the optimal domain space for $L^B$  in \eqref{intr:homoembOrlicz} within the class of Orlicz spaces exists provided that the Orlicz space $L^B$ is ``far from $L^\infty$''. On the other hand, Orlicz domain spaces for Orlicz spaces ``near $L^\infty$'' can be essentially enlarged within the class of Orlicz spaces. For example, if $B(t)$ is a Young function that is equivalent to
\begin{equation*}
\begin{cases}
e^{-t^{\beta_0}}\quad\text{for some $\beta_0<0$ or}\\
0
\end{cases}
\end{equation*}
near zero, or equivalent to
\begin{equation*}
\begin{cases}
e^{t^{\beta_\infty}}\quad\text{for some $\beta_\infty>0$ or}\\
\infty
\end{cases}
\end{equation*}
near infinity, then \eqref{thm:optimalorliczdomain:condonB} is satisfied but each Orlicz space $L^A$ that renders \eqref{intr:homoembOrlicz} true can be essentially enlarged to a bigger Orlicz space that still renders \eqref{intr:homoembOrlicz} true.
\end{remark}

We conclude this paper with a reduction principle for the inequality \eqref{intr:homoembOrlicz}. This principle follows from Theorem~\ref{thm:reductionprinciple} and \citep[Theorem~3.3.2, Proposition~3.3.4, Theorem~3.5.2]{VejtekPhD}.
\begin{theorem}\label{thm:reductionprincipleOrlicz}
Assume that $m < n$ and let $A$ and $B$ be Young functions. Then the following four statements are equivalent.
\begin{enumerate}
\item There exists a positive constant $C_1$ such that
\begin{equation*}
\|u\|_{L^B}\leq C_1 \|\nabla^m u\|_{L^A}\quad\text{for each $u\in V_0^m L^A(\rn)$}.
\end{equation*}
\item The Young function $A$ satisfies \eqref{thm:optimalrangerforOrlicz:con0} and there exists a positive constant $C_2$ such that
\begin{equation*}
B(t)\leq A_m(C_2t)\quad\text{for each $t\geq0$},
\end{equation*}
where the Young function $A_m$ is defined by \eqref{thm:optimalorliczrange:defAm}.
\item The Young function $B$ satisfies \eqref{thm:optimalorliczdomain:condonB} and there exists a positive constant $C_3$ such that
\begin{equation*}
\int_0^t\frac{\widetilde{A}(s)}{s^{\frac{n}{n-m}+1}}\,\d s\leq \frac{\widetilde{B_m}(C_3t)}{t^{\frac{n}{n-m}}}\quad\text{for each $t\geq0$},
\end{equation*}
where the Young function $B_m$ is defined by \eqref{thm:optimalorliczdomain:defBm}.
\item There exists a positive constant $C_4$ such that
\begin{equation*}
\int_0^\infty B\left(\frac{\int_t^\infty |f(s)|s^{\frac{m}{n}-1}\,\d s}{C_4\left(\int_0^\infty A(|f(s)|)\,\d s\right)^\frac{m}{n}}\right)\,\d t\leq\int_0^\infty A(|f(t)|)\,\d t\quad\text{for each $f\in L^A(0,\infty)$}.
\end{equation*}
\end{enumerate}
Moreover, the positive constants $C_1$, $C_2$, $C_3$ and $C_4$ depend only on each other, on $m$ and on the dimension $n$.
\end{theorem}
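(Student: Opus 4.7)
The plan is to use Theorem~\ref{thm:reductionprinciple} as a pivot that reduces (1) to the one-dimensional weighted Hardy-type inequality
\begin{equation*}
\left\|\int_t^\infty f(s)s^{\frac{m}{n}-1}\,\d s\right\|_{L^B(0,\infty)}\leq C\,\|f\|_{L^A(0,\infty)}\qquad\text{for each }f\in L^A(0,\infty),
\end{equation*}
and then to apply the Orlicz-space characterizations of this inequality obtained in \cite{VejtekPhD} (Theorem~3.3.2, Proposition~3.3.4, Theorem~3.5.2). Thus (1) and the displayed Hardy inequality are equivalent by Theorem~\ref{thm:reductionprinciple}, and it is enough to show that this one-dimensional inequality is equivalent, in turn, to each of (2), (3) and (4).

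First I would prove the equivalence of the Hardy inequality with the modular inequality~(4). This is a routine application of the definition of the Luxemburg norm and of the homogeneity of the operator $f\mapsto\int_t^\infty f(s)s^{m/n-1}\,\d s$: the Luxemburg-norm formulation says that whenever $\int_0^\infty A(|f(s)|)\,\d s\leq 1$, the $L^B$-modular of the transformed function scaled by $C$ is at most $1$, and (4) is then recovered by testing on $f/\bigl(\int A(|f|)\bigr)^{m/n}$ and invoking the convexity of $A$. Next, for (1)$\Leftrightarrow$(2), I would invoke Theorem~\ref{thm:optimalorliczrange}: when $A$ satisfies \eqref{thm:optimalrangerforOrlicz:con0}, $L^{A_m}$ is the smallest Orlicz space rendering \eqref{intr:homoembOrlicz} true, so (1) holds for some $C_1$ if and only if $L^{A_m}\hookrightarrow L^B$; because the measure of $\rn$ is infinite, this Orlicz embedding is equivalent to the global pointwise domination $B(t)\leq A_m(C_2 t)$ for all $t\ge 0$. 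The failure of \eqref{thm:optimalrangerforOrlicz:con0} is ruled out by Theorem~\ref{thm:optimalorliczrange}, which produces no target at all in that case, so the condition on $A$ in (2) is forced by (1).

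For (1)$\Leftrightarrow$(3) I would proceed dually. First one shows that (1) forces \eqref{thm:optimalorliczdomain:condonB} on $B$, because otherwise Theorem~\ref{thm:optimalorliczdomain} tells us that no rearrangement-invariant (hence no Orlicz) domain space exists at all. Assuming \eqref{thm:optimalorliczdomain:condonB}, the Orlicz Hardy inequality is equivalent to a condition on the associate (Young-conjugate) side: by a standard duality it translates into a weighted inequality for the truncated integral operator acting from $L^{\widetilde{B_m}}$ to $L^{\widetilde{A}}$, which, after inserting the explicit formula $B_m^{-1}\approx G_m$ coming from \eqref{thm:optimalorliczdomain:defBm}–\eqref{thm:optimalorliczdomain:defGm} and computing the Young conjugate, yields exactly the integral inequality written in (3). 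This is the content of the reduction in \cite[Theorem~3.5.2]{VejtekPhD}, whose formulation I would import verbatim.

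The main obstacle will be the equivalence (1)$\Leftrightarrow$(3): unlike (2) and (4), which follow from, respectively, the optimality of $A_m$ and a routine scaling argument, the integral condition in (3) is a genuine dual Muckenhoupt-type characterization. Its derivation from the embedding $L^A\hookrightarrow L^{B_m}$ (or from its associate version) requires the nontrivial manipulations of Young conjugates carried out in \cite{VejtekPhD}, and that is the place where I would have to be most careful when citing, rather than re-deriving, the needed technical lemma.
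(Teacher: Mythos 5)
Your proposal is correct and follows essentially the same route as the paper, whose proof of this theorem consists of combining Theorem~\ref{thm:reductionprinciple} with the Orlicz-space characterizations in \citep[Theorem~3.3.2, Proposition~3.3.4, Theorem~3.5.2]{VejtekPhD}. Your sketch merely spells out what those cited results accomplish --- the passage from the one-dimensional Hardy inequality to the modular inequality~(4) via Luxemburg-norm scaling, and the characterizations~(2) and~(3) via the Orlicz-optimal target and domain (Theorems~\ref{thm:optimalorliczrange} and~\ref{thm:optimalorliczdomain}, themselves applications of the same sources) --- so there is no material difference in approach.
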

\bibliography{\jobname}

\begin{thebibliography}{45}
\providecommand{\natexlab}[1]{#1}
\providecommand{\url}[1]{\texttt{#1}}
\expandafter\ifx\csname urlstyle\endcsname\relax
  \providecommand{\doi}[1]{doi: #1}\else
  \providecommand{\doi}{doi: \begingroup \urlstyle{rm}\Url}\fi

\bibitem[Acerbi and Mingione(2001)]{MR1814973}
E.~Acerbi and G.~Mingione.
\newblock Regularity results for a class of functionals with non-standard
  growth.
\newblock \emph{Arch. Ration. Mech. Anal.}, 156\penalty0 (2):\penalty0
  121--140, 2001.
\newblock ISSN 0003-9527.
\newblock \doi{10.1007/s002050100117}.
\newblock URL \url{https://doi.org/10.1007/s002050100117}.

\bibitem[Adams and Fournier(2003)]{MR2424078}
R.A. Adams and J.J.F. Fournier.
\newblock \emph{Sobolev spaces}, volume 140 of \emph{Pure and Applied
  Mathematics (Amsterdam)}.
\newblock Elsevier/Academic Press, Amsterdam, second edition, 2003.
\newblock ISBN 0-12-044143-8.

\bibitem[Alberico et~al.(2018)Alberico, Cianchi, Pick, and
  Slav\'{i}kov\'{a}]{ACPS}
A.~Alberico, A.~Cianchi, L.~Pick, and L.~Slav\'{i}kov\'{a}.
\newblock Sharp {S}obolev type embeddings on the entire {E}uclidean space.
\newblock \emph{Commun. Pure Appl. Anal.}, 17\penalty0 (5):\penalty0
  2011--2037, 2018.
\newblock ISSN 1534-0392.
\newblock \doi{10.3934/cpaa.2018096}.
\newblock URL \url{https://doi.org/10.3934/cpaa.2018096}.

\bibitem[Alves et~al.(2014)Alves, Figueiredo, and Santos]{MR3328350}
C.O. Alves, G.M. Figueiredo, and J.A. Santos.
\newblock Strauss and {L}ions type results for a class of {O}rlicz-{S}obolev
  spaces and applications.
\newblock \emph{Topol. Methods Nonlinear Anal.}, 44\penalty0 (2):\penalty0
  435--456, 2014.
\newblock ISSN 1230-3429.
\newblock \doi{10.12775/TMNA.2014.055}.
\newblock URL \url{https://doi.org/10.12775/TMNA.2014.055}.

\bibitem[Benkirane and Elmahi(1999)]{MR1670307}
A.~Benkirane and A.~Elmahi.
\newblock An existence theorem for a strongly nonlinear elliptic problem in
  {O}rlicz spaces.
\newblock \emph{Nonlinear Anal.}, 36\penalty0 (1):\penalty0 11--24, 1999.
\newblock ISSN 0362-546X.
\newblock \doi{10.1016/S0362-546X(97)00612-3}.
\newblock URL \url{https://doi.org/10.1016/S0362-546X(97)00612-3}.

\bibitem[Bennett and Sharpley(1988)]{BS}
C.~Bennett and R.~Sharpley.
\newblock \emph{Interpolation of operators}, volume 129 of \emph{Pure and
  Applied Mathematics}.
\newblock Academic Press, Inc., Boston, MA, 1988.
\newblock ISBN 0-12-088730-4.

\bibitem[Berezhno\u{\i}(1993)]{MR1320016}
E.I. Berezhno\u{\i}.
\newblock Sharp estimates for operators on cones in ideal spaces.
\newblock \emph{Trudy Mat. Inst. Steklov.}, 204\penalty0 (16):\penalty0 3--34,
  1993.
\newblock ISSN 0371-9685.

\bibitem[Boyd(1967)]{MR0212512}
D.W. Boyd.
\newblock The {H}ilbert transform on rearrangement-invariant spaces.
\newblock \emph{Canad. J. Math.}, 19:\penalty0 599--616, 1967.
\newblock ISSN 0008-414X.
\newblock \doi{10.4153/CJM-1967-053-7}.
\newblock URL \url{https://doi.org/10.4153/CJM-1967-053-7}.

\bibitem[Boyd(1971)]{MR0306887}
D.W. Boyd.
\newblock Indices for the {O}rlicz spaces.
\newblock \emph{Pacific J. Math.}, 38:\penalty0 315--323, 1971.
\newblock ISSN 0030-8730.
\newblock URL \url{http://projecteuclid.org/euclid.pjm/1102970044}.

\bibitem[Br{\'e}zis and Wainger(1980)]{BW}
H.~Br{\'e}zis and S.~Wainger.
\newblock A note on limiting cases of {S}obolev embeddings and convolution
  inequalities.
\newblock \emph{Commun. Part. Diff. Eq.}, 5\penalty0 (7):\penalty0 773--789,
  1980.
\newblock ISSN 0360-5302.
\newblock URL \url{https://doi.org/10.1080/03605308008820154}.

\bibitem[Brudny\u\i(1979)]{B}
Ju.A. Brudny\u\i.
\newblock Rational approximation and imbedding theorems.
\newblock \emph{Dokl. Akad. Nauk SSSR}, 247\penalty0 (2):\penalty0 269--272,
  1979.
\newblock ISSN 0002-3264.

\bibitem[Cianchi(2004)]{MR2073127}
A.~Cianchi.
\newblock Optimal {O}rlicz-{S}obolev embeddings.
\newblock \emph{Rev. Mat. Iberoamericana}, 20\penalty0 (2):\penalty0 427--474,
  2004.
\newblock ISSN 0213-2230.
\newblock \doi{10.4171/RMI/396}.
\newblock URL \url{https://doi.org/10.4171/RMI/396}.

\bibitem[Cianchi and Pick(1998)]{CP-ARK}
A.~Cianchi and L.~Pick.
\newblock {S}obolev embeddings into {BMO}, {VMO}, and ${L}^\infty$.
\newblock \emph{Ark. Mat.}, 36:\penalty0 317--340, 1998.
\newblock ISSN 0004-2080.

\bibitem[Cianchi and Pick(2016)]{CPnew}
A.~Cianchi and L.~Pick.
\newblock Optimal {S}obolev trace embeddings.
\newblock \emph{Trans. Amer. Math. Soc.}, 368\penalty0 (12):\penalty0
  8349--8382, 2016.
\newblock ISSN 0002-9947.
\newblock \doi{10.1090/tran/6606}.
\newblock URL \url{https://doi.org/10.1090/tran/6606}.

\bibitem[Cianchi et~al.(2015)Cianchi, Pick, and Slav{\'{\i}}kov{\'a}]{CPS}
A.~Cianchi, L.~Pick, and L.~Slav{\'{\i}}kov{\'a}.
\newblock Higher-order {S}obolev embeddings and isoperimetric inequalities.
\newblock \emph{Adv. Math.}, 273:\penalty0 568--650, 2015.
\newblock ISSN 0001-8708.
\newblock \doi{10.1016/j.aim.2014.12.027}.
\newblock URL \url{http://dx.doi.org/10.1016/j.aim.2014.12.027}.

\bibitem[Cwikel and Pustylnik(2000)]{CP}
M.~Cwikel and E.~Pustylnik.
\newblock Weak type interpolation near ``endpoint'' spaces.
\newblock \emph{J. Funct. Anal.}, 171\penalty0 (2):\penalty0 235--277, 2000.
\newblock ISSN 0022-1236.
\newblock URL \url{https://doi.org/10.1006/jfan.1999.3502}.

\bibitem[Donaldson(1971)]{DONALDSON1971507}
T.~Donaldson.
\newblock Nonlinear elliptic boundary value problems in {O}rlicz-{S}obolev
  spaces.
\newblock \emph{J. Differential Equations}, 10:\penalty0 507--528, 1971.
\newblock ISSN 0022-0396.
\newblock \doi{10.1016/0022-0396(71)90009-X}.
\newblock URL \url{https://doi.org/10.1016/0022-0396(71)90009-X}.

\bibitem[Edmunds et~al.(1994)Edmunds, Gurka, and Pick]{MR1267713}
D.E. Edmunds, P.~Gurka, and L.~Pick.
\newblock Compactness of {H}ardy-type integral operators in weighted {B}anach
  function spaces.
\newblock \emph{Studia Math.}, 109\penalty0 (1):\penalty0 73--90, 1994.
\newblock ISSN 0039-3223.

\bibitem[Edmunds et~al.(2000)Edmunds, Kerman, and Pick]{EKP}
D.E. Edmunds, R.~Kerman, and L.~Pick.
\newblock Optimal {S}obolev imbeddings involving rearrangement-invariant
  quasinorms.
\newblock \emph{J. Funct. Anal.}, 170\penalty0 (2):\penalty0 307--355, 2000.
\newblock ISSN 0022-1236.
\newblock \doi{10.1006/jfan.1999.3508}.
\newblock URL \url{http://dx.doi.org/10.1006/jfan.1999.3508}.

\bibitem[Edmunds et~al.(2020)Edmunds, Mihula, Musil, and Pick]{EMMP}
D.E. Edmunds, Z.~Mihula, V.~Musil, and L.~Pick.
\newblock Boundedness of classical operators on rearrangement-invariant spaces.
\newblock \emph{J. Funct. Anal.}, 278\penalty0 (4):\penalty0 108341, 2020.
\newblock ISSN 0022-1236.
\newblock \doi{10.1016/j.jfa.2019.108341}.
\newblock URL \url{https://doi.org/10.1016/j.jfa.2019.108341}.

\bibitem[Evans et~al.(1996)Evans, Opic, and Pick]{EOP}
W.D. Evans, B.~Opic, and L.~Pick.
\newblock Interpolation of operators on scales of generalized
  {L}orentz-{Z}ygmund spaces.
\newblock \emph{Math. Nachr.}, 182:\penalty0 127--181, 1996.
\newblock ISSN 0025-584X.
\newblock URL \url{https://doi.org/10.1002/mana.19961820108}.

\bibitem[Evans et~al.(2002)Evans, Opic, and Pick]{EOP-broken}
W.D. Evans, B.~Opic, and L.~Pick.
\newblock Real interpolation with logarithmic functors.
\newblock \emph{J. Inequal. Appl.}, 7\penalty0 (2):\penalty0 187--269, 2002.
\newblock ISSN 1025-5834.
\newblock URL \url{https://doi.org/10.1155/S1025583402000127}.

\bibitem[Gagliardo(1958)]{MR0102740}
E.~Gagliardo.
\newblock Propriet\`a di alcune classi di funzioni in pi\`u variabili.
\newblock \emph{Ricerche Mat.}, 7:\penalty0 102--137, 1958.
\newblock ISSN 0035-5038.

\bibitem[Gogatishvili et~al.(2006)Gogatishvili, Opic, and Pick]{GOP}
A.~Gogatishvili, B.~Opic, and L.~Pick.
\newblock Weighted inequalities for {H}ardy-type operators involving suprema.
\newblock \emph{Collect. Math.}, 57\penalty0 (3):\penalty0 227--255, 2006.
\newblock ISSN 0010-0757.

\bibitem[Hansson(1979)]{Ha}
K.~Hansson.
\newblock Imbedding theorems of {S}obolev type in potential theory.
\newblock \emph{Math. Scand.}, 45\penalty0 (1):\penalty0 77--102, 1979.
\newblock ISSN 0025-5521.
\newblock URL \url{https://doi.org/10.7146/math.scand.a-11827}.

\bibitem[Kerman and Pick(2006)]{T2}
R.~Kerman and L.~Pick.
\newblock Optimal {S}obolev imbeddings.
\newblock \emph{Forum Math.}, 18\penalty0 (4):\penalty0 535--570, 2006.
\newblock ISSN 0933-7741.
\newblock \doi{10.1515/FORUM.2006.028}.
\newblock URL \url{http://dx.doi.org/10.1515/FORUM.2006.028}.

\bibitem[Krasnosel'ski\u{\i} and Ruticki\u{\i}(1961)]{MR0126722}
M.A. Krasnosel'ski\u{\i} and Ja.B. Ruticki\u{\i}.
\newblock \emph{Convex functions and {O}rlicz spaces}.
\newblock Translated from the first Russian edition by Leo F. Boron. P.
  Noordhoff Ltd., Groningen, 1961.

\bibitem[Maz'ya(2011)]{Mabook}
V.G. Maz'ya.
\newblock \emph{Sobolev spaces with applications to elliptic partial
  differential equations}, volume 342 of \emph{Grundlehren der Mathematischen
  Wissenschaften [Fundamental Principles of Mathematical Sciences]}.
\newblock Springer, Heidelberg, 2011.
\newblock ISBN 978-3-642-15563-5.
\newblock \doi{10.1007/978-3-642-15564-2}.
\newblock URL \url{http://dx.doi.org/10.1007/978-3-642-15564-2}.

\bibitem[Musil(2018)]{VejtekPhD}
V.~Musil.
\newblock \emph{Classical operators of harmonic analysis in {O}rlicz spaces}.
\newblock PhD thesis, {C}harles {U}niversity, Faculty of {M}athematics and
  {P}hysics, 2018.
\newblock URL \url{https://is.cuni.cz/webapps/zzp/detail/150069/}.

\bibitem[Musil(2019)]{Musil2019}
V.~Musil.
\newblock Fractional maximal operator in {O}rlicz spaces.
\newblock \emph{J. Math. Anal. Appl.}, 474\penalty0 (1):\penalty0 94--115,
  2019.
\newblock ISSN 0022-247X.
\newblock \doi{10.1016/j.jmaa.2019.01.034}.
\newblock URL \url{https://doi.org/10.1016/j.jmaa.2019.01.034}.

\bibitem[Musil and O{\soft{l}}hava(2019)]{MusilOlhava2018}
V.~Musil and R.~O{\soft{l}}hava.
\newblock Interpolation theorem for {M}arcinkiewicz spaces with applications to
  {L}orentz gamma spaces.
\newblock \emph{Math. Nachr.}, 292\penalty0 (5):\penalty0 1106--1121, 2019.
\newblock ISSN 0025-584X.
\newblock \doi{10.1002/mana.201700452}.
\newblock URL \url{https://doi.org/10.1002/mana.201700452}.

\bibitem[Nirenberg(1959)]{MR0109940}
L.~Nirenberg.
\newblock On elliptic partial differential equations.
\newblock \emph{Ann. Scuola Norm. Sup. Pisa (3)}, 13:\penalty0 115--162, 1959.

\bibitem[O'Neil(1963)]{MR0146673}
R.~O'Neil.
\newblock Convolution operators and {$L(p,\,q)$} spaces.
\newblock \emph{Duke Math. J.}, 30:\penalty0 129--142, 1963.
\newblock ISSN 0012-7094.
\newblock URL \url{http://projecteuclid.org/euclid.dmj/1077374532}.

\bibitem[Opic and Pick(1999)]{OP}
B.~Opic and L.~Pick.
\newblock On generalized {L}orentz-{Z}ygmund spaces.
\newblock \emph{Math. Inequal. Appl.}, 2\penalty0 (3):\penalty0 391--467, 1999.
\newblock ISSN 1331-4343.
\newblock \doi{10.7153/mia-02-35}.
\newblock URL \url{https://doi.org/10.7153/mia-02-35}.

\bibitem[Peetre(1966)]{MR0221282}
J.~Peetre.
\newblock Espaces d'interpolation et th\'{e}or\`eme de {S}oboleff.
\newblock \emph{Ann. Inst. Fourier (Grenoble)}, 16\penalty0 (1):\penalty0
  279--317, 1966.
\newblock ISSN 0373-0956.
\newblock URL \url{http://www.numdam.org/item?id=AIF_1966__16_1_279_0}.

\bibitem[Pe{\v{s}}a()]{P}
D.~Pe{\v{s}}a.
\newblock Reduction principle for a certain class of kernel-type operators.
\newblock Accepted for publication in Mathematische Nachrichten.
  arXiv:1908.06313 [math.FA].

\bibitem[Pick et~al.(2013)Pick, Kufner, John, and Fu{\v{c}}{\'{\i}}k]{FS}
L.~Pick, A.~Kufner, O.~John, and S.~Fu{\v{c}}{\'{\i}}k.
\newblock \emph{Function spaces. {V}ol. 1}, volume~14 of \emph{De Gruyter
  Series in Nonlinear Analysis and Applications}.
\newblock Walter de Gruyter \& Co., Berlin, extended edition, 2013.
\newblock ISBN 978-3-11-025041-1; 978-3-11-025042-8.

\bibitem[Rao and Ren(1991)]{MR1113700}
M.M. Rao and Z.D. Ren.
\newblock \emph{Theory of {O}rlicz spaces}, volume 146 of \emph{Monographs and
  Textbooks in Pure and Applied Mathematics}.
\newblock Marcel Dekker, Inc., New York, 1991.
\newblock ISBN 0-8247-8478-2.

\bibitem[Sobolev(1938)]{Sob38}
S.L. Sobolev.
\newblock On a theorem of functional analysis.
\newblock \emph{Mat. Sb.}, 46:\penalty0 471--496, 1938.

\bibitem[Sobolev(1991)]{MR1125990}
S.L. Sobolev.
\newblock \emph{Some Applications of Functional Analysis in Mathematical
  Physics}, volume~90 of \emph{Translations of Mathematical Monographs}.
\newblock American Mathematical Society, Providence, RI, 1991.
\newblock ISBN 0-8218-4549-7.

\bibitem[Trudinger(1967)]{MR0216286}
N.S. Trudinger.
\newblock On imbeddings into {O}rlicz spaces and some applications.
\newblock \emph{J. Math. Mech.}, 17:\penalty0 473--483, 1967.

\bibitem[Vuillermot(1982)]{VUILLERMOT1982327}
P-A. Vuillermot.
\newblock A class of {O}rlicz-{S}obolev spaces with applications to variational
  problems involving nonlinear {H}ill's equations.
\newblock \emph{J. Math. Anal. Appl.}, 89\penalty0 (1):\penalty0 327--349,
  1982.
\newblock ISSN 0022-247X.
\newblock \doi{10.1016/0022-247X(82)90105-6}.
\newblock URL \url{https://doi.org/10.1016/0022-247X(82)90105-6}.

\bibitem[Vyb\'{i}ral(2007)]{MR2307145}
J.~Vyb\'{i}ral.
\newblock Optimal {S}obolev embeddings on {$\mathbb{R}^n$}.
\newblock \emph{Publ. Mat.}, 51\penalty0 (1):\penalty0 17--44, 2007.
\newblock ISSN 0214-1493.
\newblock \doi{10.5565/PUBLMAT_51107_02}.
\newblock URL \url{https://doi.org/10.5565/PUBLMAT_51107_02}.

\bibitem[Zhikov(1986)]{Zhikov-1987}
V.V. Zhikov.
\newblock Averaging of functionals of the calculus of variations and elasticity
  theory.
\newblock \emph{Izv. Akad. Nauk SSSR Ser. Mat.}, 50\penalty0 (4):\penalty0
  675--710, 877, 1986.
\newblock ISSN 0373-2436.
\newblock \doi{10.1070/im1987v029n01abeh000958}.
\newblock URL \url{https://doi.org/10.1070/im1987v029n01abeh000958}.

\bibitem[Ziemer(1989)]{ziemerweakly}
W.P. Ziemer.
\newblock \emph{Weakly Differentiable Functions: Sobolev Spaces and Functions
  of Bounded Variation}, volume 120 of \emph{Graduate Texts in Mathematics}.
\newblock Springer-Verlag New York, 1989.
\newblock ISBN 978-0-387-97017-2.

\end{thebibliography}

\end{document}